
\documentclass[twoside]{article}
\usepackage{amsmath,amssymb}
\def\letitre{Spatially periodic instantons: Nahm transform and moduli}   %
\title{\letitre}
\author{Benoit Charbonneau, %
Jacques Hurtubise}
\date{14 December 2017}

\newcommand{\addressBenoit}{Department of Pure Mathematics, 
				University of Waterloo, 
				200 University Avenue West, Waterloo, Ontario, N2L 3G1, Canada.}    
\newcommand{\emailBenoit}{benoit@alum.mit.edu}          
\newcommand{\addressJacques}{Department of Mathematics and Statistics,
             			McGill University,
           			805 Sherbrooke St. W, Montreal, Quebec, H3A 2K6, Canada.}
\newcommand{\emailJacques}{jacques.hurtubise@mcgill.ca}

\usepackage[margin=2.5cm,includehead]{geometry}
\usepackage{fancyhdr}
\pagestyle{fancy}
\setlength{\headwidth}{\textwidth}
\fancyhf{}
\fancyhead[RE,RO]{\thepage}
\fancyhead[CO]{\letitre\ }
\fancyhead[CE]{Charbonneau and Hurtubise }
\fancypagestyle{plain}{%
   \fancyhead[LO]{}
   \fancyhead[CO]{}\fancyhead[RO]{}
}

\newenvironment{proof}{\topsep=\smallskipamount \partopsep=0pt  %
 \begin{trivlist} \itemindent=\parindent                        %
 \item[\hskip \labelsep\emph{Proof:}]}{\qed\end{trivlist}}      %
\let\qed=\relax                                                 %
\def\qed                                                        %
 {{\unskip\nobreak\hfil\penalty50                               %
   \quad\hbox{}\nobreak\hfil $\Box$                             %
   \parfillskip=0pt \finalhyphendemerits=0 \par}}               %

\def\@thmcountersep{-}                                          %
\newtheorem{theorem}{Theorem}[section]                          %
\newtheorem{proposition}[theorem]{Proposition}                  %
\newtheorem{prop}[theorem]{Proposition}
\newtheorem{corollary}[theorem]{Corollary}                      %
\newtheorem{lemma}[theorem]{Lemma}                              %
                  %

\usepackage{diagrams}  
        \diagramstyle[noPS]
          \newarrow{Iso}C---{>>}
	\newarrow{Corresponds}<--->

\DeclareMathAlphabet{\mathdj}{U}{msb}{m}{n}
\newcommand{\R}{\ensuremath{\mathdj {R}}}  
\newcommand{\C}{\ensuremath{\mathdj {C}}}  
\newcommand{\Z}{\ensuremath{\mathdj {Z}}}  
\newcommand{\bbr}{\R}			   

\newcommand{\uu}{{\mathfrak{u}}}   
\newcommand{\SU}{\mathrm{SU}}     
\newcommand{\U}{\mathrm{U}}        

\newcommand{\PP}{\ensuremath{\mathdj {P}}} 
\newcommand{\RT}{{\R\times S^1\times T}}         
\newcommand{\RTn}[2]{{\R^{#1}\times T^{#2}}}
\newcommand{\RnS}[1]{{\R^{#1}\times S^1}}

\newcommand{\ch}{\mathrm{ch}}           
\newcommand{\Ind}{\mathrm{Ind}}         
\newcommand{\End}{\mathrm{End}}         
\newcommand{\EE}{{\mathcal{E}}}         
\newcommand{\FF}{\mathcal{F}}  		    
\newcommand{\FFF}{\mathfrak{F}}
	\newcommand{\FFu}{\FFF}  
	\newcommand{\FFl}{{\FF}} 
\newcommand{\II}{\mathcal{I}}  			
\newcommand{\KK}{\mathcal{K}}         
\newcommand{\KKK}{{\mathfrak{K}}}         
	\newcommand{\KKu}{{\KKK}}	
	\newcommand{\KKl}{{\KK}}	
\newcommand{\LL}{{\mathcal{L}}}         
\newcommand{\PPP}{{\mathcal{P}}}        
\newcommand{\QQ}{{\mathcal{Q}}}         
\newcommand{\OO}{{\mathcal{O}}}         

\newcommand{\diag}{\mathrm{diag}}        
\newcommand{\iso}{\cong}                 
\newcommand{\rk}{\mathrm{rk}}            

\newcommand{\MM}{{\mathcal{M}}}		 
\newcommand{\Pic}{{\mathrm{Pic}}}

\newcommand{\SSS}{{\mathcal{S}}}         

\newcommand{\spinC}[1]{\,\,\,\makebox[0pt]{$#1$}\raisebox{1pt}{\makebox[0pt]{$\slash$}}\hspace{3pt}}  
\newcommand{\DD}{\spinC{\mathfrak{D}}}            
\newcommand{\db}{\bar\partial}                    
\newcommand{\del}{\bar{\partial}}                 
\newcommand{\p}{\partial}                         

\newcommand{\bump}{\chi}                  
\newcommand{\Eh}{\hat{E}}                 
\newcommand{\floor}[1]{\lfloor#1\rfloor}  
\newcommand{\dist}{\mathrm{dist}}         
\newcommand{\id}{\mathbf{I}}              
\newcommand{\Lip}{\mathrm{Lip}}           
\newcommand{\loc}{\mathrm{loc}}           
\newcommand{\radius}{\mathrm{radius}}     
\newcommand{\sll}{\mathrm{sl}}            
\newcommand{\Spec}{\mathrm{Spec}}         
\newcommand{\ord}{\mathrm{ord}}         

\newcommand\ca{\mathcal}
\newcommand{\citep}[2]{\cite[{#1}]{#2}}
\usepackage[usenames,dvipsnames]{xcolor}
\usepackage[pdffitwindow=false,colorlinks, citecolor=blue, linkcolor=blue, urlcolor=Maroon, filecolor=Maroon]{hyperref}
\usepackage{url}

\newcommand{\fold}[1]{\textcolor{blue}{\textbf{---Folded---}}}
\newcommand{\folded}[2]{\textcolor{blue}{\textbf{---Folded (#1)---}}}
\renewcommand{\fold}[1]{#1}
\renewcommand{\folded}[2]{#2}

\newcommand{\ok}{}


\begin{document}
\maketitle

\begin{abstract}This paper establishes that the Nahm transform sending spatially periodic instantons (instantons on the product of the real line and a three-torus) to singular monopoles on the dual three-torus is indeed a bijection as suggested by the heuristic.  In the process, we show how the Nahm transform intertwines to a   Fourier--Mukai transform  via   Kobayashi--Hitchin correspondences.  We also  prove existence and non-existence results.
\end{abstract}
\footnotetext{The authors can be reached respectively at [\emailBenoit, \addressBenoit], [\emailJacques, \addressJacques]}

\section{Introduction} 
\label{sec:introduction}
This paper examines the intertwining, in an interesting geometric case, of three recurrent themes of the study of the anti-self-duality equations: the Nahm transform, the Kobayashi--Hitchin correspondence, and the Fourier--Mukai transform.

\textbf{The Nahm transform.} The Nahm transform is a heuristic which, in its basic form, relates
solutions to the self-duality or anti-self-duality equation  on a quotient $X$ of
$\R^4$ by a closed subgroup $\Lambda$ to   solutions on a quotient $X^*$ of 
${\R^4}^*$ by the dual subgroup $\Lambda^*:=\{f\in{\R^4}^*\mid
f(\Lambda)\subset\Z\}$.  Because some of these quotients are not four-dimensional, one has to consider reductions of self-duality: the Bogomolny, Hitchin, and Nahm equations come into play, for example.

The different possibilities for correspondence $X\leftrightarrow X^*$ are given, up to diffeomorphism, by transposing the following diagram:
\begin{equation*}\label{allquotients}\begin{matrix}
   &T^4 &{\RTn{}3} & \RTn22 & {{\RnS3}} &\R^4   &\\
   &{{T^3}} &{\RTn{}2} & \RTn21 & \R^3\\
   &T^2 &\RnS{} &  \R^2\\
  &{{S^1}} & \R    &         &       &X\\
 &{*}^{}&     &         &  X^*
  \end{matrix}\end{equation*}
 The metrics on the tori, while all flat, depend on the $\Lambda$ chosen. We note that the transform interchanges $\Lambda$ with its dual.

In a nutshell, the heuristic starts with a connection $\nabla$ satisfying the  ASD equation (or the appropriate dimensional reduction). One solves the Dirac equation in the background of a family of shifts of $\nabla$ by characters parameterized by $X^*$.  Glued together,  the spaces of solutions form a vector bundle $\Eh$ over $X^*$, embedded in a   trivial infinite rank bundle of $L^2$ sections of the appropriate spinor bundles. Projecting on $\Eh$ the trivial connection of the trivial $L^2$ bundle,  one obtains the transformed connection, and the necessary endomorphisms are produced by projecting the  multiplication by the coordinates of the non-compact directions of $X$. This description is the cartoon picture; the actual transforms involve fixing appropriate singularities, boundary conditions, etc.  A detailed description of the heuristic in our case is given in \cite[Section 3]{benoitpaper}.

The transform was introduced by Nahm in \cite{nahmEqn,Nahm} to explain the ADHM construction of instantons over $\R^4$ introduced in
\cite{ADHM} in physicists' language.   Nahm then extended the heuristic to monopoles \cite{Nahm}. The heuristic given by Nahm
was set on firmer mathematical ground by Corrigan--Goddard in
\cite{corrigan-goddard}, and Hitchin in \cite{hitchinMonopoles}.    

The transform provides a framework in which to think about the
classification of all the  solutions to the ASD
equation whose curvature has finite $L^2$-norm. This framework, well explained in the survey paper \cite{jardimsurvey} and in \cite[\S 5.2]{FM-Nahm-book},  guided
several authors in the understanding of 
moduli spaces of instantons (or their appropriate dimensional
reduction) on various quotients  $X$ of $\R^4$:
\cite{ADHM,biquardjardim,braam1989,benoitpaper,benoitjacques2,Durcan-Cherkis,Cherkis-Kapustin-Super-QCD,CherkisWard-MonopoleWalls,corrigan-goddard,hitchinMonopoles,hurtubiseMurray,hurtubiseMurraySpectral,jardim2002,nakajima,nye,Osborn-ADHM,schenk1988,Szabo-SMF}.  While the heuristic gives the general idea, carefully working out the details has proven to be necessary.  One notable example is the case of $\R^2\times T^2$ where to study the full moduli space of finite $L^2$ curvature instantons, one has to introduce wild singularities on the codomain of the transform, and in turn one has to get surjectivity one has to increase the domain to a larger set of filtered bundles.  Mochizuki's paper \cite{Mochizuki-doublyperiodic} recounting this story settled negatively a conjecture of Jardim (in \cite{jardim2002b}) on the asymptotic decay of doubly-periodic instantons, conjecture that had been standing for more than a decade.

The heuristic has been implemented for other families of spaces, first for the asymptotically locally Euclidean complete hyper-K\"ahler manifolds (the \emph{ALE gravitational instantons}) by
Kronheimer--Nakajima in \cite{kronheimer1990}.
Extending the Nahm transform to other gravitational instantons had to wait for the corresponding flat case to be solved first. Cherkis proposed a framework to tackle the \emph{ALF} case in   \cite{Cherkis-InstantonsGravitons}, informed by earlier successes on the Taub--NUT space in \cite{Cherkis-Taub-NUT,Cherkis-instantonsTN}.  This framework involves a mix of of Nahm and ADHM data on the representation of a bow.  At this stage, we do not yet know that the Nahm transform is bijective for ALF spaces (except for  the simplest  \cite{Cherkis-Hurtubise}) but the transform from instantons to bow representations is established on solid footing in \cite{Cherkis-Larrain-Stern-1}.

A general framework where  the moduli space of flat connections that is normally the target of the transform  is replaced by a moduli space of instanton connections has been proposed by \cite{Bartocci2004a,Bartocci2004} to give a transform between moduli spaces of instantons on various  hyper-K\"ahler manifolds, not all four dimensional. There is great current interest in studying instantons in higher dimension, and a Nahm transform in this context is understandably a desired tool given its success in lower dimension. Very little has been accomplished at this time, except perhaps for \cite{Nakamula-Sasaki-Takesue-ADHM}.

In another direction,  the Nahm transform gauge theoretic version of \cite{Bonsdorff-thesis,Bonsdorff-Crelles} for Higgs bundles has been implemented in \cite{Frejlich-Jardim}.

One notable hole in the literature is the case of $\R\times T^3$, addressed in this paper.   After  early work by van Baal (numerical approximations and remarks in \cite{vanBaal1996} and a computation in the case of charge 1 in \cite{vanBaal1999}), the singular monopole obtained by the Nahm transform in the case of structure group $\SU(2)$ was studied further by the first author in his PhD thesis \cite{benoitthesis,benoitpaper}.  Physical motivation for getting a deeper understanding of the Nahm transform in this case can be found in the recent paper \cite{Maxfield-Sethi-domain-walls} of Maxfield and Sethi.

\textbf{The Kobayashi--Hitchin correspondence.} If one is on a compact K\"ahler surface, part of the ASD equations implies the integrability of the $\bar\partial$-operator associated to the connection, and so gives a holomorphic vector bundle, which can be shown to be stable when the connection is irreducible \cite[Thm 2.3.2]{Lubke-Teleman}.
 The correspondence between irreducible ASD connections (mod gauge) and stable bundles turns out to be a bijection; this was first proven by Donaldson \cite{donaldson-surfaces}. The idea is that by solving a variational problem one can find a metric whose Chern connection has  ASD curvature. The notion of antiselfduality is the two complex dimensional version of a general property of a connection in arbitrary dimension, that of being Hermitian--Yang--Mills (HYM); there is then a correspondence, in arbitrary dimension between HYM connections and stable holomorphic bundles. This correspondence goes back to Narasimhan and Seshadri in dimension one, and is due to Uhlenbeck and Yau in arbitrary dimensions. If one adds in singularities, boundaries, symmetries,  Higgs fields, etc., the correspondence, appropriately modified, often still holds, and has been the subject of works by a wide variety of authors; let us mention Simpson \cite{Simpson-Hodge-structures}, Biquard \cite{Biquard-fibresparaboliquesstables} 
and Biquard--Jardim \cite{biquardjardim} in particular, as their results are most particularly relevant our present discussion. For a more thorough history of the Kobayashi--Hitchin correspondence, one can read the introduction of \cite{Lubke-Teleman}.

\textbf{The Fourier--Mukai transform.} For the Nahm transform, we twisted by a family of flat connections, and looked for solutions to the Dirac equation; the holomorphic analogue here is to twist by all line bundles and take a direct image onto the Jacobian. This transform, introduced by Mukai in the early eighties as a way of obtaining equivalences between derived categories on Abelian varieties \cite{FM}, has become a standard element of the algebro-geometric toolkit; see, for instance, the very good books of Bartocci,  Bruzzo, and  Hernandez Ruip\'erez, or of Huybrechts \cite{FM-Nahm-book,Huybrechts-FM}.It is an interesting question of what the Nahm transform does to the holomorphic data that classifies the ASD connections. In the four-torus case, one obtains a Fourier--Mukai transform.  Similar things occur in the case that interests us.

\textbf{The cases at hand.}
Let $T$ be a two-torus, equipped with a flat metric; it is in a natural way a genus one curve. We set 
$T^*$ to be the dual torus, with $t^*\in T^*$ corresponding to a line bundle $L_{t^*}$ on $T$.
We  study several classes of data, showing that they are equivalent, with the relations fitting into the following commuting diagram:
\begin{equation}\label{basic-diagram}
\begin{diagram} 
  (F,\nabla,\phi)\text{ on }S^1\times T^*& &&  \rCorresponds^{\text{Nahm} }&&&\ (E,\nabla) \text{ on }  \RT\\ 
\dCorresponds>{\text{Kob.--Hit.}} &&&&&& \dCorresponds<{\text{Kob.--Hit.}}\\ 
\begin{matrix}\text{Pair } (\FFu,\rho)\\ {\rm on}\ T^*\end{matrix}&\rCorresponds{\rm Hecke\quad }&
\begin{matrix}\text{Pair } (\FFl,\rho)\\ \text{ on } T^*\end{matrix}&\rCorresponds{\text{Spectral}}&\begin{matrix}(C, \KKl)\\ \text{ on }\PP^1 \times T^*\end{matrix}&\rCorresponds{\quad\text{Fourier--Mukai}}&\begin{matrix}\text{Bundle } \EE\\ \text{ on } \PP^1 \times T\end{matrix}
\end{diagram}\hskip-5mm\end{equation} 

Here (more complete definitions are given later):
\begin{itemize}
\item $(F,\nabla,\phi)$ is a $\U(k)$ monopole, defined on $ S^1\times T^*$, with Dirac type singularities of weight    $(1,0,\ldots,0)$ at $n$ points $(  \theta^+_i, t^{*,+}_i)\in  S^1\times T^*$, and of weight    $(-1,0,\ldots,0)$ at $n$ points $(\theta^-_i, t^{*,-}_i)\in  S^1\times T^*$. The points $t^{*,+}_i, t^{*,-}_i$ are supposed distinct.
\item $(E,\nabla)$ is a rank $n$ bundle with a finite energy charge $k$ anti-self-dual $\U(n)$ connection on the flat cylinder  $\RT$. At $+\infty$,   $(E,\nabla)$ is asymptotic to a fixed flat $\U(1)^n$ connection on $S^1 \times T $ which, on each $ \{\mu\}\times T$, corresponds to the sum of line bundles $\oplus_i L_{t^{*,+}_i}$, and on the $S^1$ factors, acts on the $L_{t^{*,+}_i}$ by $\frac{\partial}{\partial \mu} +  i\theta^+_i$. One has, at $-\infty$, the same, but with $(t^{*,-}_i, \theta^-_i)$.
\item $(\FFu ,\rho)$, and $(\FFl ,\rho)$ are pairs consisting of a rank $k$ holomorphic vector bundle over $T$ equipped with a meromorphic automorphism $\rho$, with simple zeroes at ${t^{*,+}_i}$, simple poles at ${t^{*,-}_i}$; the map $\rho$ is an isomorphism elsewhere. The pairs satisfy stability conditions involving the $\theta^\pm_i$.
\item $(C, \KKl )$ is a pair consisting of a holomorphic curve $C$ of degree $(n,k)$ in $ \PP^1 \times T^*$, and a sheaf  $\KKl$ supported on $C$. The curve $C$ intersects $\{0\} \times T^*$ in the points $(0, t^{*,+}_i)$, and 
$\{\infty\}\times T^*$ in $(\infty, t^{*,-}_i)$.
\item $\EE $ is a semi-stable holomorphic rank $n$ vector bundle of degree $0$ over $ \PP^1 \times T$, with $c_2(\EE ) = k$. Over $\{\infty\}$ in $\PP^1$, the bundle $\EE $ is the sum $\oplus_i L_{t^{*,+}_i}$, and over $\{0\}$, it is $\oplus_i L_{t^{*,-}_i}$
\item The vertical correspondences are the  Kobayashi--Hitchin correspondences, which associate to anti-self dual connections some holomorphic data that classify them,
\item The horizontal correspondences are Fourier-type transforms, the top one using the Dirac equation, and the bottom holomorphic data.
\end{itemize}

The sections, in what follows, in essence explore the various pieces of the diagram (\ref{basic-diagram}) in turn. Section \ref{sec:monopolesandpairs} begins with the left hand side with its vertical arrow, recalling the results of \cite{benoitjacques3}. Section \ref{sec:instantonsandholomorphicbundles} considers the right hand side, examining the Kobayashi--Hitchin correspondence for this case.  Section \ref{sec:FourierMukai} treats the bottom row, and discusses the Fourier--Mukai transforms. Section \ref{sec:NahmTransform} considers the top row, recalls results of Charbonneau \cite{benoitpaper} on the Nahm transform in this case, shows that the diagram commutes, and sums up the equivalences. Section \ref{sec:non_moduli} derives a few consequences on moduli, and discusses a few remaining questions.


\section{The leftmost vertical arrow: monopoles and pairs $(\FFu ,\rho)$ \ok}\label{sec:monopolesandpairs}
In this section we consider the left hand side of the basic diagram (\ref{basic-diagram}), and describe the correspondence between monopoles and pairs $(\FFu ,\rho)$ which is the subject of  \cite{benoitjacques3}, specialized to the case that concerns us. We begin with a definition of a family of singular monopoles over $S^1\times T^*$. Let $t^* $ be a holomorphic coordinate on $T^*$. 
 We suppose chosen distinct points $t^{*,+}_i, t^{*,-}_i, i=1,\ldots, n$ of $T^*$.
 
 For simplicity, we suppose that $S^1$ is equipped with a metric giving it circumference $1$, and denote by $\theta\in \R$  the corresponding multi-valued coordinate. Let $\floor{\ }$ denotes the integer part. Also let $\widetilde\theta\in [0,1)$ parametrize  the circle $S^1$ isometrically, with $\widetilde\theta=\theta-\floor{\theta}$.  As constants associated to these angular coordinates reappear throughout the paper, we pause to give some of our conventions. 
 
 We have points in the circle represented by $\theta_i^+, \theta_i^-, i = 1,\ldots,n$, which for convenience we  suppose distinct, ordered, and not integers. One has  
\begin{equation}\sum_i\theta_i^+-\sum_i\theta_i^-=0;\end{equation}
 a priori this number could be any integer but we shall see that the normalisation to zero is natural. With our conventions, we set
\begin{align*} 
	0&<\theta_1^+<\theta_2^+<\cdots<\theta_n^+<\theta_n^+\leq\theta_1^++1<2,\\
0&<\theta_1^-<\theta_2^-<\cdots<\theta_n^-<\theta_n^-\leq\theta_1^-+1<2.\end{align*}
  We set 
  \begin{align*}
  	\widehat\theta^+_i&:= \theta_i^+ +1,   & \widehat\theta^-_i&:= \theta_i^-,\\
\widetilde\theta^\pm_i&:= \theta_i^\pm - \floor{\theta_i^\pm},&\widehat\Theta &:= \sum_i\floor{\theta_i^-}-\floor{\theta_i^+}.
  \end{align*}

Let $p_1^+,\ldots,p_n^+$ be the collection of points on $S^1\times T^*$ given by $p_i^+=( \widetilde \theta^+_i  , t^{*,+}_i)$; similarly, set $p_i^- =(\widetilde\theta_i^-   , t^{*,-}_i)\in  S^1\times T^*$.    We fix weights $w^+ = (w_{1}^+ ,\ldots,w_{ k}^+ ) =(1,0,\ldots,0)$, $w ^- = (w_{1}^- ,\ldots,w_{k}^-) = (-1,0,\ldots,0)$; these weights are to be thought of as cocharacters, mapping $\U(1)$ to the maximal torus.  Let $F$ be a $\U(k)$-bundle on $(S^1\times T^*)\setminus\{p_i^+, p_i^-\mid i=1,\ldots,n\}$ whose degree is $\widehat\Theta$ on $\widetilde\theta=0$, is $ 1$ on spheres around the $p_i^+$, and is  $ -1$ on spheres around the $p_i^-$.  As one moves the complex curves $\widetilde\theta=cst$ through the $p_i^\pm$, the degree of the restriction of $F$ thus changes by $\pm 1$.  Let $\nabla$ be a $\U(k)$-connection on $F$, and let $\phi$ be a section of the associated (adjoint) $\uu(k)$ bundle. Following \cite{benoitjacques3}, we  say that $(F,\nabla,\phi)$ is a singular $\U(k)$ monopole on $(S^1\times T^*)\setminus\{p_i^+, p_i^- \mid i=1,\ldots,n\}$ of weights $w_i^\pm$ at $p_i^\pm$ if
\begin{itemize}
\item $(F,\nabla,\phi)$ satisfy the Bogomolny equation $[\nabla_i,\nabla_j] = \sum_k \epsilon_{ijk}\nabla_k\phi$  (equivalently $*F_\nabla=d_\nabla \phi$), and if
\item we have Dirac type singularities in a neighbourhood   of the  $p_i^\pm$: in a neighbourhood of $p_i^\pm$, if $R$ is the geodesic distance to the singularity, one imposes that
 in a suitable gauge
\begin{align*}
\phi =& \frac {i}{2R}\diag(w_{i,1}^\pm,\ldots,w_{i,n}^\pm) + O(1)= \frac {i}{2R}\diag(\pm1,0,0,..,0) + O(1),
\end{align*}
and $\nabla (R \phi) = O(1)$ as $R\to 0$.
\end{itemize}

We can define from such a monopole a pair  $(\FFu ,\rho)$   consisting of a rank $k$ holomorphic vector bundle over $T^*$, and $\rho$ a meromorphic automorphism of $\FFu$. For the first, we simply note that the connection defines a $\bar \partial$ operator $\nabla^{0,1}_{T^*}$ over the Riemann surface $T^*$ given by $ \theta= 0$, giving the bundle $F|_{\theta= 0}$ a holomorphic structure which we denote by $\FFu$. The Bogomolny equations imply that 
$[\nabla^{0,1}_{T^*}, \nabla_\theta - i\phi] = 0$, so that parallel transport defined by integrating $ (\nabla_\theta - i\phi)s= 0$ preserves the holomorphic structure. Transporting around the circle defines a return map $\rho \colon \FFu \to \FFu $.  It is holomorphic as long as one stays away from the $t^{*\pm}_i$. It is shown in \cite[Prop 2.5]{benoitjacques3} that in a neighbourhood of the  $t^{*\pm}_i$  one can write $\rho = a_i^\pm(t^*) \diag_j ((t^*- t^{*\pm}_i)^{w_{i,j}^\pm}) b_i^\pm(t^*)$ with $ a_i^\pm,  b_i^\pm$  holomorphic and invertible. The pair $(\FFu ,\rho)$ can be shown to satisfy a stability condition: for  a $\rho$-invariant subbundle $\FFu '$ of $\FFu$ of rank $k'$, degree $j_0'$, one lets the weights of the restriction $\rho'$ of $\rho$ to $\FFu'$ at $p_i^\pm$ be denoted by $w_{i,j}^{\pm,{}'}, j= 1, \ldots ,k'$, and one can define a degree 
\begin{equation}\delta_{\widetilde\theta^{\pm}}(\FFu' ,\rho') = \deg(\FFu') -\sum_{i,\pm} \widetilde\theta_i^\pm\sum_j (w_{i,j}^{\pm,{}'}),\label{eqn:degreedef}\end{equation}
and a slope
\begin{equation}\sll_{\widetilde\theta^\pm}(\FFu' ,\rho') = \frac{\delta_{\widetilde\theta^{\pm}}(\FFu' ,\rho')}{\rk(\FFu')}.\label{eqn:slopedef}\end{equation}
Note that $\sum_j (w_{i,j}^{\pm,{}'})$ is the order of $\det(\rho')$ at $t^{*,\pm}$; in our case it is $0$ or $+1$ at the points $t^{*,+}$, and $0$ or $-1$ at the points $t^{*,-}$.
  One then has the (semi-)stability condition by asking that for all invariant subbundles $\FFu'$,
\[\sll_{\widetilde\theta^\pm}(\FFu',\rho')\quad (\leq)< \quad \sll_{\widetilde\theta^\pm}(\FFu ,\rho).\]
Here, as $\rho$ has poles, invariance of a subsheaf $\FFu'$ is defined by asking that   any section of $\FFu'$ has image under $\rho$ lying in $\FFu'$ as long as the image lies in $\FFu$. An irreducible monopole gives a stable pair $(\FFu ,\rho)$; if the monopole is reducible, one gets a sum of stable $(\FFu ,\rho)$  with same slope. As the stability depends on $\widetilde\theta_i^\pm$, we refer to this notion of stability as $\widetilde\theta^\pm$-stability.

We have from \cite[Remark 3.8]{benoitjacques3} the proposition:
\begin{proposition} For bundles coming from monopoles and so defined on the complement of $p_i^\pm$ in $S^1\times T^*$, the quantity $  \sll_{\widetilde\theta^\pm}(\FFu ,\rho)\cdot \rk(\FFu)$ is the average in $\widetilde\theta$ of the degree of the restriction of $\FFu$ to the curves $\{\widetilde\theta\}\times T^*$ as one moves $\widetilde\theta$ around the circle. \end{proposition}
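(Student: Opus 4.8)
The plan is to reduce the statement to an elementary computation with a step function. For $\widetilde\theta\in S^1$ not equal to any of the singular values $\widetilde\theta_i^\pm$, the curve $\{\widetilde\theta\}\times T^*$ avoids the points $p_i^\pm$, so $F$ restricts to a genuine holomorphic bundle on $T^*$ and I would set $d(\widetilde\theta):=\deg\bigl(F|_{\{\widetilde\theta\}\times T^*}\bigr)=\frac{i}{2\pi}\int_{\{\widetilde\theta\}\times T^*}\tr F_\nabla$, a well-defined integer that is locally constant in $\widetilde\theta$. Since $\FFu = F|_{\theta=0}$ and $0$ is a regular value, $\deg\FFu = d(0)$. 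Because the weights are $(1,0,\dots,0)$ at $p_i^+$ and $(-1,0,\dots,0)$ at $p_i^-$, one has $\sum_j w_{i,j}^+=1$ and $\sum_j w_{i,j}^-=-1$, so the defining formula (\ref{eqn:degreedef}) gives
\[
\sll_{\widetilde\theta^\pm}(\FFu,\rho)\cdot\rk(\FFu)=\delta_{\widetilde\theta^\pm}(\FFu,\rho)=\deg\FFu-\sum_i\widetilde\theta_i^+ +\sum_i\widetilde\theta_i^-.
\]
It then suffices to show that the right-hand side equals $\int_0^1 d(\widetilde\theta)\,d\widetilde\theta$, the average over the unit-circumference circle.

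The first key step is to compute the jump of $d$ across each singular value. Fixing $p_i^+$ and two regular values $\widetilde\theta_1<\widetilde\theta_i^+<\widetilde\theta_2$ with no other singular value in between, I would work on the cylinder $W=[\widetilde\theta_1,\widetilde\theta_2]\times T^*$ with a small geodesic ball $B$ about $p_i^+$ removed; on $W\setminus B$ the connection is smooth, $\tr F_\nabla$ is closed, and Stokes' theorem applied to the oriented boundary $\{\widetilde\theta_2\}\times T^*-\{\widetilde\theta_1\}\times T^*-\partial B$ yields $d(\widetilde\theta_2)-d(\widetilde\theta_1)=\frac{i}{2\pi}\int_{\partial B}\tr F_\nabla$. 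This last flux is the degree of $F$ on a small sphere about $p_i^+$, which is $+1$ by hypothesis; the identical argument at $p_i^-$ gives a jump of $-1$. Hence $d$ is the step function $d(\widetilde\theta)=\deg\FFu+N^+(\widetilde\theta)-N^-(\widetilde\theta)$, where $N^\pm(\widetilde\theta)=\#\{i:\widetilde\theta_i^\pm<\widetilde\theta\}$ (all singular values lying in $(0,1)$, so $N^\pm(0^+)=0$, consistent with $\deg\FFu=d(0)$).

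It remains to integrate. Using $\int_0^1\mathbf 1[\widetilde\theta_i^\pm<\widetilde\theta]\,d\widetilde\theta = 1-\widetilde\theta_i^\pm$, I obtain
\[
\int_0^1 d(\widetilde\theta)\,d\widetilde\theta=\deg\FFu+\sum_i(1-\widetilde\theta_i^+)-\sum_i(1-\widetilde\theta_i^-)=\deg\FFu-\sum_i\widetilde\theta_i^+ +\sum_i\widetilde\theta_i^-,
\]
the two counts of $n$ cancelling, which matches the expression for $\sll_{\widetilde\theta^\pm}(\FFu,\rho)\cdot\rk(\FFu)$ above and proves the proposition. The one point requiring genuine care—and the only place the monopole structure enters beyond bookkeeping—will be the Stokes argument in the second step: one must know that $\tr F_\nabla$ is integrable near the Dirac singularities and that the flux through a small sphere is exactly the charge $\pm1$. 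This is supplied by the Dirac-type asymptotics $\phi=\frac{i}{2R}\diag(\pm1,0,\dots,0)+O(1)$ together with the Bogomolny equation $*F_\nabla=d_\nabla\phi$, which control the curvature near $p_i^\pm$ precisely as for the model Dirac monopole.
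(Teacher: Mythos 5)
Your proof is correct, and it is in substance the argument the paper relies on: the paper gives no in-text proof of this proposition, quoting it instead from \cite[Remark 3.8]{benoitjacques3}, and the content there is exactly your computation --- the restricted degree $d(\widetilde\theta)$ is a step function jumping by $+1$ at each $\widetilde\theta_i^+$ and by $-1$ at each $\widetilde\theta_i^-$, and integrating it over the circumference-one circle gives $\deg\FFu-\sum_i\widetilde\theta_i^+ +\sum_i\widetilde\theta_i^-$, which by the weight normalisation $\sum_j w_{i,j}^\pm=\pm1$ is precisely $\delta_{\widetilde\theta^\pm}(\FFu,\rho)=\sll_{\widetilde\theta^\pm}(\FFu,\rho)\cdot\rk(\FFu)$. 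One clarification: the analytic caution you flag at the end (integrability of $\tr F_\nabla$ near the Dirac singularities, verification of the charge from the asymptotics of $\phi$) is not actually needed. In the paper's setup, the degrees $\pm1$ of $F$ on small spheres around $p_i^\pm$ are part of the \emph{hypotheses} defining the bundle $F$, stated just before the Bogomolny equation is imposed; and your Stokes argument takes place entirely on $W\setminus B$, where the connection is smooth, so it only uses closedness of $\tr F_\nabla$ there together with Chern--Weil on the smooth sphere $\partial B$. The jump is therefore a purely topological consequence of the prescribed sphere degrees (the paper itself records it in the sentence ``As one moves the complex curves $\widetilde\theta=cst$ through the $p_i^\pm$, the degree of the restriction of $F$ thus changes by $\pm 1$''), and no control of the curvature at the singular points themselves enters the argument.
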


For the case of concern to us, we have
\begin{equation}  \sll_{\widetilde\theta^\pm}(\FFu ,\rho) =0. \end{equation}

The main theorem of \cite{benoitjacques3}, Theorem 4.1 (specialized to our case), states that the monopole to stable pair correspondence map is a bijection:
\begin{theorem}\label{thm:main}
The moduli space ${\MM}_{\widehat\Theta}^{ir}(S^1\times T^*,
p_1^\pm,\ldots,p_n^\pm,w_1^\pm \ldots,w_n^\pm)$ of $\U(k)$
irreducible   monopoles on
$S^1\times T^*$ with $F|_{\widetilde\theta=0}$ of degree
$\widehat\Theta$ and Dirac type singularities at $p_j^\pm$ of type $w_i^\pm$ maps bijectively to the
space ${\MM}_s\bigl(T^*,\widehat\Theta, \bigl((w_i^\pm,t_i^{*,\pm}),i=1, \ldots,n\bigr),\widetilde\theta^\pm\bigr)$ of
 $\widetilde\theta^\pm$-stable holomorphic pairs $(\FFu,\rho)$ with 
\begin{itemize}
\item $\FFu$ a holomorphic rank $k$ bundle of degree $\widehat\Theta$ on $T^*$,
\item $\rho$ a meromorphic section of $\mathrm{Aut}(\FFu)$ of the form $a_i^\pm(t^*)
\diag_j ((t^*- t^{*,\pm}_i)^{w_{i,j}^\pm})b_i^\pm(t^*)$ near $t_i^{*,\pm}$, with  $ a_i^\pm,  b_i^\pm$  holomorphic and
invertible; 
$\det(\rho)$ has divisor $\sum_{i } t_i^{*,+}-t_i^{*,-}$.
\end{itemize}
More generally, 
the reducible  monopoles  correspond bijectively to 
 $\widetilde\theta^\pm$-polystable, but unstable, pairs.
\end{theorem}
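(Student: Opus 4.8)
The forward assignment $(F,\nabla,\phi)\mapsto(\FFu,\rho)$ is already produced in the discussion above, so the plan is to (i) check that it descends to moduli, (ii) prove that irreducibility forces $\widetilde\theta^\pm$-stability, (iii) invert it by solving the Bogomolny equation from the holomorphic data, and (iv) verify that the two assignments are mutually inverse; the polystable statement then follows by a splitting argument. For (i), a unitary gauge transformation of $(F,\nabla,\phi)$ restricts on $\{\theta=0\}$ to a holomorphic automorphism of $\FFu$ and conjugates the return map $\rho$, so the isomorphism class of $(\FFu,\rho)$ is a gauge invariant; the reverse implication, that an isomorphism of pairs is realized by a unitary gauge, is deferred to (iv).

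For (ii), suppose $(\FFu,\rho)$ is not stable, so there is a $\rho$-invariant subsheaf $\FFu'$ with $\sll_{\widetilde\theta^\pm}(\FFu',\rho')\ge\sll_{\widetilde\theta^\pm}(\FFu,\rho)$. I would promote $\FFu'$ to a $\nabla$- and $\phi$-invariant subbundle of $F$ over a dense open set: $\rho$-invariance is invariance of $\FFu'$ under the complexified return map, and transporting $\FFu'$ off the slice $\{\theta=0\}$ by the flow $\nabla_\theta-i\phi$ spreads it over $S^1\times T^*$, while the identity $[\nabla^{0,1}_{T^*},\nabla_\theta-i\phi]=0$ keeps it holomorphic on every slice. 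Such an invariant subbundle is a reduction of the monopole, contradicting irreducibility. The numerical bookkeeping is supplied by the degree formula of the Proposition above: $\sll_{\widetilde\theta^\pm}\cdot\rk$ equals the circle-average of the slice degree, so the energy of the putative sub-monopole computes $\delta_{\widetilde\theta^\pm}(\FFu',\rho')$, the weights $w_{i,j}^{\pm,\prime}$ entering through the prescribed Dirac model at each $p_i^\pm$; the vanishing total slope $\sll_{\widetilde\theta^\pm}(\FFu,\rho)=0$ makes the comparison clean.

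The crux is (iii), the existence of a monopole with prescribed holomorphic data, which is a dimensionally-reduced Hermitian--Yang--Mills problem in the spirit of Donaldson \cite{donaldson-surfaces}, Simpson \cite{Simpson-Hodge-structures}, and the singular/boundary analysis of Biquard \cite{Biquard-fibresparaboliquesstables} and Biquard--Jardim \cite{biquardjardim}. The plan is: (a) build a smooth $\U(k)$ background $(h_0,\nabla_0,\phi_0)$ whose behaviour near each $p_i^\pm$ matches the standard singular Dirac model of weight $w_i^\pm$, so that the Bogomolny defect of $h_0$ lies in an appropriate weighted space; (b) seek the true solution in the complex gauge orbit, $h=h_0e^{s}$ with $s$ bounded and self-adjoint, reducing Bogomolny to a single elliptic equation for $s$; (c) solve it by a continuity method or by minimizing a Donaldson-type functional, using $\widetilde\theta^\pm$-stability to extract the essential $C^0$ a priori bound on $s$ that keeps the metric from degenerating, and using the fixed singular model to run the linear theory in weighted Sobolev or H\"older spaces near the $p_i^\pm$. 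The main obstacle lies exactly here: one must control two singular/non-compact features at once --- the stability-driven sup-norm estimate that forces convergence, and the matching of the solution to the Dirac asymptotics at the punctures.

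Finally, for (iv), the Donaldson functional is convex along geodesics in the space of Hermitian metrics, so the minimizer is unique modulo automorphisms; since a stable pair has only scalar automorphisms, the reconstructed monopole is unique up to unitary gauge, and composing the two constructions in either order returns the original object. This gives the bijection in the irreducible/stable case. For the last sentence of the theorem, a $\widetilde\theta^\pm$-polystable but unstable pair is by definition a direct sum of $\widetilde\theta^\pm$-stable pairs of the common slope $0$; applying (iii) to each summand and taking the orthogonal direct sum yields a reducible monopole, and conversely a reducible monopole splits $\nabla$- and $\phi$-invariantly into irreducible pieces whose associated stable pairs all have slope $0$, so the two sets correspond.
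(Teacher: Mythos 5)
You should first be aware that the paper does not prove this theorem in situ: it is quoted (specialized) from Theorem~4.1 of the authors' earlier paper \cite{benoitjacques3}, with only the remark that the hypothesis of distinct $\widetilde\theta_i^\pm$ made there can be dropped. So your proposal is really competing with the proof in that reference, which is a full singular Kobayashi--Hitchin analysis; your overall architecture (gauge invariance, irreducible $\Rightarrow$ stable, existence via a singular Hermitian--Yang--Mills problem, uniqueness via convexity of the Donaldson functional) is the right skeleton. However, your step (ii) contains a genuine error. Transporting $\FFu'$ off the slice $\{\theta=0\}$ by the flow of $\nabla_\theta-i\phi$ and invoking $[\nabla^{0,1}_{T^*},\nabla_\theta-i\phi]=0$ produces a subbundle invariant only under the \emph{complexified} operators $\nabla^{0,1}_{T^*}$ and $\nabla_\theta-i\phi$; it is not preserved by $\nabla$ and $\phi$ separately, its orthogonal complement need not be invariant under anything, and so it is \emph{not} a reduction of the monopole. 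No contradiction with irreducibility follows. Indeed, notice that your argument never uses the slope hypothesis $\sll_{\widetilde\theta^\pm}(\FFu',\rho')\ge \sll_{\widetilde\theta^\pm}(\FFu,\rho)$: if the reasoning were valid, it would show that the pair of an irreducible monopole admits no $\rho$-invariant subsheaves whatsoever, which is false (such subsheaves exist whenever the spectral curve is reducible, and stability is formulated as a slope inequality precisely because they exist). The correct mechanism, as in \cite{Lubke-Teleman} and in \cite{benoitjacques3}, is Chern--Weil/Gauss--Codazzi applied to the orthogonal projection onto the saturation of $\FFu'$: the Bogomolny equation together with the Dirac-type control $\nabla(R\phi)=O(1)$ lets one integrate by parts across the punctures and identify $\delta_{\widetilde\theta^\pm}(\FFu',\rho')$ with a curvature integral (whose boundary contributions at the $p_i^\pm$ produce exactly the weight terms $-\sum \widetilde\theta_i^\pm w_{i,j}^{\pm,\prime}$) minus the $L^2$-norm squared of the second fundamental form. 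Hence every invariant subsheaf has non-positive $\widetilde\theta^\pm$-degree, with equality if and only if the second fundamental form vanishes, i.e.\ if and only if there is a genuine $\nabla$-parallel, $\phi$-invariant orthogonal splitting; the slope hypothesis then forces equality and contradicts irreducibility.

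Two further points. First, your step (iii) is the heart of the theorem, and as written it is a plan rather than a proof: you name the Donaldson--Simpson--Biquard machinery and then explicitly set aside the two places where essentially all of the work of \cite{benoitjacques3} lies (the stability-to-$C^0$ estimate for the heat flow or continuity method, and matching the solution to the Dirac model at the punctures in weighted spaces). Second, your polystability paragraph asserts that the stable pairs of the irreducible summands of a reducible monopole ``all have slope $0$''; this is true but not free. It is exactly the Chern--Weil identity again: for \emph{any} solution of the (ungeneralized) Bogomolny equation with Dirac singularities on $S^1\times T^*$, the average over $\theta$ of the slice degree vanishes, so by the Proposition quoted in this section each summand's pair has slope $0$. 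So the same curvature-integral ingredient that is missing from your step (ii) is also the unstated justification of your final step; once it is supplied, both arguments close.
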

(The theorem is proven in \cite{benoitjacques3} for $\widetilde\theta_i^\pm$ distinct, but this restriction was simply for notational convenience, and removing it poses no problem; it is essential however that the $t^{*,+}_i, t^{*,-}_i$ be distinct; also, one can have different integer weights, and slopes that are not integers, though for the latter one must generalise the Bogomolny equation.)

\section{The rightmost vertical arrow: instantons and holomorphic bundles}\label{sec:instantonsandholomorphicbundles}
\subsection{Holomorphic extension \ok} 
\label{sec:holomorphic_extension}
We have a finite energy instanton $(E, \nabla)$ on the cylinder $ \RT$, and we want to define a suitable bundle $\EE$ on $  \PP^1\times T$  corresponding to it. 

Let therefore $(E, \nabla)$ be a bundle with a unitary anti-self-dual connection on $ \RT$. We suppose that the energy ($L^2$ norm of the curvature) of the connection is finite, which gives us a charge $k$ for the second Chern class, and some good asymptotic behaviour.

Indeed, let $\phi,\psi$ be angular coordinates on $T$, $\mu$ be an angular coordinate on the $S^1$, and $s$ a standard coordinate on the $\R$. As this $S^1$ is dual to the $S^1$ of the previous section, its circumference is $2\pi$. Write $\nabla$ as $d + iA_sds +iA_\mu d\mu + iA_\psi d\psi + iA_\phi d\phi$. Fortunately, the asymptotics of $A$ has been well developed in the context of Floer theory, in various places (\cite{Donaldson2002, mrowkabook, Taubes1993}). Let us just deal with the end $s = +\infty$, the treatment of the other end being the same.
In a gauge with $A_s = 0$, the instanton is asymptotic  at infinity  to a  flat unitary connection $\partial_\mu + iA^\infty_\mu, \partial_\psi+ iA^\infty_\psi, \partial_\phi + iA^\infty_\phi$. We  suppose that we are in a generic situation: 
\begin{enumerate}
	\item 
that the connection on $T$ at $s=\infty$ corresponds to a sum 
\[L_{t_1^{*,+}}\oplus\cdots \oplus L_{t_n^{*,+}}\]
 of distinct line bundles (this situation corresponds in our monopole picture to the locations of the singularities $t_i^{*,+}$ being distinct and also gives the condition of regularity; see \cite{Donaldson2002}),
 and  
 \item that the connection on $S^1$ has monodromy with distinct eigenvalues. 
  \end{enumerate}
 In a gauge with $A_s= 0$, we have for some $\rho>0$ that
\begin{equation}\begin{aligned} \nabla_\mu =&\ \partial_\mu + iA^\infty_\mu + O(\exp(-\rho s)),\\
\nabla_\psi =&\ \partial_\psi + iA^\infty_\psi + O(\exp(-\rho s)),\\
\nabla_\phi =&\ \partial_\phi + iA^\infty_\phi + O(\exp(-\rho s)). \end{aligned}\label{asymptotics}\end{equation}
 The connection matrices at infinity $iA^\infty_\mu,   iA^\infty_\psi,  iA^\infty_\phi$ are constant, and commute. 
 Also, all derivatives of the connection matrices also have exponential decay, with the same $\rho$. By hypothesis, the eigenvalues of $  A^\infty_\mu$ are distinct, and we  suppose that $  A^\infty_\mu$ is diagonal, with real eigenvalues  $\theta^+_1, \dots, \theta^+_n$, with $\theta^+_1<\theta^+_2<\cdots<\theta^+_n\leq \theta^+_1+1 $.
  The other connection matrices are then also diagonal. Note that gauge transformations asymptotic at infinity to  $ \exp(i \mu \diag_i (n_i))$, which can be different at either end of the cylinder, shift the $\theta_i$ by integers. 

We suppose similar behaviour at $s=-\infty$, with $t_i^{*,-}, \theta_i^-$ instead of $t_i^{*,+}, \theta_i^+$. The   $\theta_i^+, \theta_j^-$ are supposed distinct.  We also  note that the determinant bundle is a $\U(1)$ instanton, hence flat; this fact gives the natural normalisation $\sum_i\theta_i^+=\sum_i\theta_i^-$.

We complexify our picture: $T$ is to be thought of as an elliptic curve, with complex coordinate $w$; the cylinder $\R\times S^1$ becomes complex, too, with a coordinate $z= \exp(-s-i\mu)$, so that $s= \infty$ is $z=0$. The complex cylinder $\R\times S^1$ has a two-point compactification to $\PP^1$. An instanton on $\RT$ defines a holomorphic structure on the bundle, and the aim is to extend it to $\PP^1\times T$. 

As is typical when dealing with moduli spaces of holomorphic bundles, one needs a notion of stability.  This notion is described immediately following the theorem.

\begin{theorem}\label{thm1} Let $(E,A)\to \RT$ be a rank $n$ instanton of charge $k$,
with asymptotic
behaviour given as above (that is by Equation (\ref{asymptotics}) and the following paragraph).  Then there is a holomorphic bundle $\EE\to \PP^1\times T$ such that
\[\EE|_{\RT}\iso (E,\del_A),\] 
with the extension to the compactification $\EE$  chosen so that
\begin{enumerate}
\item $c_1(\EE)=0$ and $c_2(\EE)=k$,
\item       $\EE|_{\{+\infty\}\times T}\iso\oplus_{i=1}^n L_{t_i^{*,+}}$ and $\EE|_{\{-\infty\}\times T}\iso  \oplus_{i=1}^n L_{t_i^{*,-}}$,  (thus, on the elliptic curves $T_0,T_\infty$ over the points $0$, $\infty$ in $\PP^1\times T$ and so for the generic  $T_z$, the  bundle $\EE$ is a sum of line bundles of degree zero), and
\item $\EE$ is $\theta^\pm$-stable.
\end{enumerate}
\end{theorem}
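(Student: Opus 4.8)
The plan is to produce $\EE$ in two stages: first by the Koszul--Malgrange construction on the open part, and then by a controlled holomorphic extension across the two compactifying divisors $T_0=\{0\}\times T$ and $T_\infty=\{\infty\}\times T$, reading off the local extension type from the asymptotic data. Since $(E,\nabla)$ is anti-self-dual on the complex surface $(\PP^1\setminus\{0,\infty\})\times T$, the $(0,2)$-part of its curvature vanishes, so $\del_A$ is integrable and defines a holomorphic bundle $\EE^\circ$ on $(\PP^1\setminus\{0,\infty\})\times T$ with $\EE^\circ\iso(E,\del_A)$. All the real content is then in extending $\EE^\circ$ across $T_0$ and $T_\infty$ so as to meet the three numbered conditions.

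Next I would treat the end $s=+\infty$ (i.e.\ $z\to0$), the end $s=-\infty$ being identical. In the gauge of (\ref{asymptotics}) the connection differs from its flat limit $\partial_\mu+iA^\infty_\mu,\ \partial_\psi+iA^\infty_\psi,\ \partial_\phi+iA^\infty_\phi$ by a term that is $O(\exp(-\rho s))$ together with all its derivatives. In the coordinate $z=\exp(-s-i\mu)$ this flat limit is a direct sum over the eigenlines: the $i$th summand is the flat line bundle $L_{t_i^{*,+}}$ on $T$, pulled back and twisted over the punctured $z$-disc, with the twisting governed by the $\mu$-holonomy $\exp(2\pi i\theta^+_i)$. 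This is a tame (regular-singular) model, so it admits a canonical holomorphic extension across $z=0$, and the normalisation $\theta^+_1<\cdots<\theta^+_n\le\theta^+_1+1$ is exactly the choice of a width-one fundamental domain for the residue that makes the extended central fibre equal to $\oplus_i L_{t_i^{*,+}}$, a sum of degree-zero line bundles. The substantial step is to promote this to the genuine bundle: using the exponential decay one produces a bounded holomorphic-gauge transformation identifying $\del_A$ near $z=0$ with the model up to a fast-decaying term that can be absorbed, so that $\EE^\circ$ extends to a holomorphic $\EE$ near $T_0$ with $\EE|_{T_0}\iso\oplus_i L_{t_i^{*,+}}$. \emph{This is where I expect the main obstacle to lie}: establishing the extension with prescribed restriction to the central fibre, its type pinned down by the asymptotic holonomy, and this is precisely where the decay estimates of \cite{Donaldson2002,mrowkabook,Taubes1993} do the heavy lifting. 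Gluing the two local extensions to $\EE^\circ$ gives $\EE$ on $\PP^1\times T$; that a generic $T_z$ is again a sum of degree-zero line bundles follows because that is an open condition which holds at $z=0,\infty$, where the summands are moreover distinct.

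I would then pin down the Chern classes. The determinant connection is a $\U(1)$ instanton, hence flat, so the induced extension has $c_1(\det\EE)=c_1(\EE)=0$; the integer ambiguities in the individual end-extensions cancel in the determinant precisely under the normalisation $\sum_i\theta^+_i=\sum_i\theta^-_i$, consistently with the degree-zero restrictions to all the fibres $T_z$. For $c_2$, the finite-energy hypothesis gives charge $k$, and since the chosen extension is modelled on the flat (zero-curvature) limit there is no curvature concentration at $T_0$ or $T_\infty$; comparing the $L^2$ Chern--Weil integral $\tfrac1{8\pi^2}\int\tr(F\wedge F)$ with the topological $c_2$ of this canonical extension yields $c_2(\EE)=k$.

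Finally, for $\theta^\pm$-stability I would run the easy direction of the Kobayashi--Hitchin correspondence in its parabolic/filtered form (cf.\ \cite{Lubke-Teleman,biquardjardim}). Given a coherent subsheaf of $\EE$ that is destabilising for the $\theta^\pm$-weighted slope, its saturation is a holomorphic subbundle off a small set; restricting the anti-self-dual connection and applying a Chern--Weil/Weitzenb\"ock estimate bounds the analytic slope, so a strictly destabilising subsheaf would force an orthogonal splitting of the connection, contradicting irreducibility, while the equality case yields only the polystable reducible situation. The one delicate point is that the weights entering the slope are exactly the asymptotic holonomies $\theta^\pm_i$, so the inequality must be set up with the weighted degree used in the definition of $\theta^\pm$-stability given below; matching the analytic slope to that weighted degree is the only nonroutine part of the stability argument.
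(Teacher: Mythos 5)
Your outline follows the same route as the paper's proof: integrability of $\del_A$ on the open cylinder, gauge-normalisation at each end of the $(0,1)$-part of the connection to the model \eqref{normalised}, extension across $z=0,\infty$ by the singular clutching function \eqref{unitary-to-holomorphic}, identification of the central fibres with $\oplus_i L_{t_i^{*,\pm}}$, flatness of the determinant for $c_1=0$, and a Chern--Weil/parabolic-degree argument \`a la L\"ubke--Teleman and Biquard--Jardim for stability. The genuine gap is precisely the step you yourself flag as the main obstacle and then dispose of by citation: you assert that ``using the exponential decay one produces a bounded holomorphic-gauge transformation identifying $\del_A$ near $z=0$ with the model up to a fast-decaying term that can be absorbed,'' attributing the heavy lifting to \cite{Donaldson2002,mrowkabook,Taubes1993}. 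Those references supply only what the theorem already assumes in \eqref{asymptotics} --- exponential decay to a flat limit in a gauge with $A_s=0$ --- and not the absorption itself. In the complex coordinate the decay gives $A_{\bar z}=\frac12 A^\infty_\mu/\bar z+a$ with $a=O(|z|^{\rho-1})$ as in \eqref{untransformed}, a perturbation that is not even bounded when $\rho<1$; what must be proved is that the singular equation $\bar\partial_{A^\infty_\mu}u=ua+a$ of \eqref{eqn:elliptic eqn for u} admits a solution $u$ that is bounded (in fact decaying), so that $g=1+u$ is an admissible gauge transformation. That is where essentially all of the paper's analytic work lies.

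The missing idea is how to invert $\bar\partial_{A^\infty_\mu}$ with control on weights. Solving entrywise means forming $|z|^{\theta_j-\theta_i}K\bigl(|z|^{\theta_i-\theta_j}f_{ij}\bigr)$ with $K$ the Cauchy transform; when $\theta_i>\theta_j$ this has growth rate $\min(\alpha+1,\theta_j-\theta_i)<0$, so a naive Cauchy inversion produces an unbounded ``gauge transformation'' and the argument collapses. The paper's fix is to replace $K$ on the lower-triangular entries by the normalised operator $C(f)(\zeta)=K(f)(\zeta)-K(f)(0)$, yielding the operator $\widetilde K_{A^\infty_\mu}$ of \eqref{eqn:Ktilde}, which maps the weighted space $\widetilde W^{1,\infty}_{\alpha}$ continuously to $\widetilde W^{1,\infty}_{\hat\alpha}$ with \emph{positive} target weight $\hat\alpha=\min(\alpha+1,\theta^{min}_{gap},1-\theta^{max}_{gap})$ (Corollary \ref{cor:controlf}); on that space $u\mapsto\widetilde K_{A^\infty_\mu}(ua+a)$ is a contraction for $R$ small (Lemma \ref{lemma:contraction}), and the fixed point is then smoothed by elliptic bootstrapping. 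Without this weighted contraction scheme, adapted from \cite{biquardjardim}, or an equivalent argument, your extension across $T_0$ and $T_\infty$ --- and with it all three numbered conclusions --- remains an assertion. The rest of your outline (restriction to the central fibre, $c_1$ via the flat determinant, $c_2$ from the charge, and the stability estimate) matches the paper's proof and is fine modulo this step.
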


The stability criterion here is one of parabolic type: one considers only subbundles $\EE'$ that are sums of line bundles of degree zero for $z=0,\infty$; this condition forces $\EE'$ to be a sum $\oplus_{i\in I^+} L_{t_i^{*,+}}$ over $z=0$, and $\oplus_{i\in I^-} L_{t_i^{*,-}}$ over $z= \infty$ (that is, they are built from a  subset of the line bundles involved in the direct sum decomposition of $\EE$ at the corresponding infinity), and also $c_1(\EE') = j'[\omega_T]$. We define the ${\theta^\pm}$-degree of $\EE'$ to be 
\begin{equation}\label{eqn:deltathetapm}\delta_{\theta^\pm}(\EE') = j' -\sum_{i\in I^+} \theta_i^+ + \sum_{i\in I^-} \theta_i^-,\end{equation}
with a corresponding slope $\sll_{\theta^\pm}(\EE') = \delta_{\theta^\pm}(\EE')/\rk(\EE')$. The definition of stability is the usual one: we ask that $\sll_{\theta^\pm}(\EE') < \sll_{\theta^\pm}(\EE) $ for all subbundles of the type we have restricted to. The slope $\sll_{\theta^\pm}(\EE) $ turns out to be zero.


Let us first  change coordinates. We set
\begin{equation}  z = \exp(-s - i \mu), \bar z = \exp(-s+i\mu)
\end{equation}
and choose an appropriate constant complex combination $w$ of $\phi$ and $\psi$ such that $w$ is a complex coordinate compatible with the conformal structure of $T^2$.
One then has $z \bar z = \exp(-2s), \bar z/z = \exp(2i\mu)$, and so
\begin{equation} d\mu = \frac{1}{2i} (-\frac{dz}{z} +\frac {d\bar z}{\bar z}).
\end{equation}
Thus, for the $(0,1)$ component of the connection $\nabla^{0,1} = (\frac{\p}{\p \bar z} + A_{\bar z})d\bar z+(\frac{\p}{\p \bar w} + A_{\bar w})d\bar w$, we have
\begin{align}A_{\bar z} &=  \frac{1}{2} \frac{A^\infty_\mu}{\bar z} + O(|z|^{\rho-1}),\label{untransformed}\\
A_{\bar w} &=  A^\infty_{\bar w} + O(|z|^{\rho}).\nonumber\end{align}
The terms with an $\infty$ superscript are constants.

Note that the notation is chosen here with a different convention in real or complex coordinates.  Indeed, we write $\nabla_{\bar z}=\frac{\p}{\p \bar z} + A_{\bar z} $ while we write $\nabla_{\mu}=\frac{\p}{\p \mu} + iA_{\mu}$ (notice the absence/presence of $i$).

We now want to show that this semiconnection is (complex) gauge equivalent locally in a neighbourhood of $z=0, w=0$ to 
\begin{align}A_{\bar z} &=  \frac{1}{2} \frac{A^\infty_\mu}{\bar z} ,\label{normalised} \\
A_{\bar w} &=   0.\nonumber \end{align}

We follow the ideas of Biquard and Jardim \cite{biquardjardim} (and ultimately from \cite[Section 9]{Biquard-caslogarithmique}), and consider   first   the planes $w = $\ constant. One wants to find a gauge transformation
$ g (z)= 1+u(z,\bar z)$, with $u$ smooth away from $z= 0$  such that $u/|z|$ is bounded. Writing $A_{\bar z}$ in Equation (\ref{untransformed}) as
\begin{equation}
A_{\bar z} =   \frac{1}{2} \frac{A^\infty_\mu}{\bar z} + a,	
\end{equation}
and applying $g$ to it, 
 we want
 \begin{equation}\label{eqn:for u complicated}
(1+u)\Bigl(\frac{1}{2} \frac{A^\infty_\mu}{\bar z} + a \Bigr)- \frac{\partial u}{\partial \bar z} = \Bigl(\frac{1}{2} \frac{A^\infty_\mu}{\bar z}\Bigr)(1+u), 	
 \end{equation}
 or equivalently $\nabla_{\bar z}=\p_{\bar z}+\frac12\frac{A_\mu^\infty}{\bar z}$ in the new gauge.
Let
\[\bar \partial_{A^\infty_\mu}(u) := \frac{\partial u}{\partial \bar z} + [ \frac{1}{2} \frac{A^\infty_\mu}{\bar z},u].\]
We note, as do \cite[p.~354]{biquardjardim}, that if $r^2(z) = z\bar z$, then for a matrix valued function $u$,  
\[\bar \partial_{A^\infty_\mu}(u) = r^{-A^\infty_\mu}\Bigl(\bar \partial\bigl(r^{A^\infty_\mu}\cdot u\cdot r^{-A^\infty_\mu}\bigr)\Bigr)r^{A^\infty_\mu}.\]
Equation \eqref{eqn:for u complicated} can be written simply as
\begin{equation}
	\label{eqn:elliptic eqn for u}
	\bar \partial_{A^\infty_\mu}(u) = ua +a. 
\end{equation}

A similar conjugation then holds for the Cauchy operators.  The Cauchy operators provide  inverses, at least for $C^1$ functions, but even here in our singular cases, as we now establish. Let $\bump\colon \C\to\R$ be a non-negative smooth function which is $1$ on $B(0,\frac R2)$ and $0$ outside $B(0, \frac 78R)$.   We consider the operators
\begin{align*}
K(f)(\zeta) &:=  \frac{1}{2\pi i}\int_\C \frac{ f }{z-\zeta}dz\wedge d\bar z,\\
\hat K &:= \bump K\circ \bump,\\
C(f) (\zeta) &=  K(f)(\zeta) -  K(f)(0),\\
\hat C &:= \bump C\circ \bump.
\end{align*}
These operator acts on scalar functions or on matrix valued functions $f$ by acting on each matrix entry $f_{ij}$ separately. Let $LT(f)$ denotes the lower triangular part of the matrix $f$.  We set 
\begin{align}
	\widetilde K(f)(\zeta) &:=  \hat K(f)(\zeta) -\hat K( LT(f))(0), \text{ and }\notag\\
	\widetilde K_{A^\infty_\mu}(f) &:= r^{-A^\infty_\mu} \widetilde K({r^{A^\infty_\mu}fr^{-A^\infty_\mu}})r^{ A^\infty_\mu}.\label{eqn:Ktilde}
\end{align}

On the smaller set $B(0,\frac R2)$, we have $\bar \partial_{A^\infty_\mu}\circ \widetilde K_{A^\infty_\mu}(f) = f$. We want 
 to solve 
\begin{equation}\label{eqn:def u implicit}
	u = \widetilde K_{A^\infty_\mu} (ua +a).
\end{equation}
While it could be prudent to carry $R$ along in our definitions (for instance, $\chi_R$, $\hat K_R$), we avoid doing so and the useful value of $R$ is determined by Lemma \ref{lemma:contraction} below.

The strategy to solve this equation is to introduce the operator
\[T(u):=\widetilde K_{A^\infty_\mu} (ua +a),\]
find an appropriate weighted Sobolev space domain for $T$ where it is a contraction, and show that the fixed point is smooth using elliptic regularity.   

For $X\subset \C$ (and $X=\C$ when omitted), consider the following norms
\begin{align*}
	\|f\|_{W^{\infty}_\alpha(X)}&:=\sup_{z\in X}\left||z|^{-\alpha}f(z)\right|,\\
	\|f\|_{W^{1,\infty}_\alpha(X)}&:= \|f\|_{W^\infty_\alpha(X)}+\|df\|_{W^\infty_{\alpha-1}(X)}.
\end{align*}
Note that the $df$ in the second definition can be a weak derivative.
Consider the function spaces
\begin{align*}
	W^{\infty}_{\alpha}(X)  =W^{0,\infty}_{\alpha}(X) &:=\{f\in L^1_\loc\mid \|f\|_{W^{\infty}_{\alpha}}<\infty\},\\
	W^{1,\infty}_{\alpha}(X) &:=\{f\in L^1_\loc\mid df\in L^1_\loc\text{ and }\|f\|_{W^{1,\infty}_{\alpha}}<\infty\},\\
	W^{k,\infty}_{c,\alpha}(X)&:=\{f\in W^{k,\infty}_{\alpha}(X)\mid \exists R\text{ s.t. }f(x)=0\text{ for }|x|>R\},\\
	W^{k,\infty}_{R,\alpha}(X)&:=\{f\in W^{k,\infty}_{\alpha}(X)\mid  f(x)=0\text{ for }|x|>R\},\\
	\widetilde W^{k,\infty}_{R,\alpha}&:=\text{closure of }C^1(\C\setminus\{0\})\text{ in }W^{k,\infty}_{R,\alpha}(\C).
\end{align*}

\textbf{Remarks:}
\begin{enumerate}
	\item Because of the introduction of weights, the $k$ in the standard notation $L^p_k$ could be interpreted both as a weight or as the number of weak derivative considered.  This issue suggests that we be deliberate in our choice of notation and weights are indices while number of derivatives are exponents.
	\item Elements of $W^{1,\infty}_\alpha$ are locally in $W^{1,\infty}$ hence  locally Lipschitz continuous, hence in $L^p_\loc$ (not just in $L^1_\loc$).  They therefore can be integrated against the Cauchy kernel.   
	\item The inequality $\alpha\geq \beta$ implies $W_{R,\alpha}^{k,\infty}\subset W_{R,\beta}^{k,\infty}$ and the operator norm of the inclusion is $R^{\alpha-\beta}$.
\item Multiplication by $|z|^\beta$ is an isomorphism $W^{k,\infty}_{\alpha}\to W^{k,\infty}_{\alpha+\beta}$ (and an isometry if $k=0$).
\item One should note that $W_0^\infty=L^\infty$ but $W_0^{1,\infty}\neq W^{1,\infty}$.
\end{enumerate}

\begin{lemma} 
Let $f\in W^{1,\infty}_{c,\alpha}$.  If $\alpha>-2$ then $K(f)$ converges.
\label{lemma:converge}
\end{lemma} 

\begin{proof} 
	\fold{
We use the fact that, as sets, $W^{1,\infty}=\Lip\subset C^0$.  Set $a\in\C\setminus\{0\}$. Let $U_j,V_j$ be open balls centered at $0$ if $j=0$ and $a$ if $j=1$ and such that $0\in V_0\subset U_0$ and $a\in V_1\subset U_1$, and $U_0\cap U_1=\emptyset$.  Let $U_2= \C\setminus \overline {V_0\cup V_1}$ and $V_2= \C\setminus \overline {U_0\cup U_1}$.  
 Suppose $\psi_1,\psi_2,\psi_3$ is a partition of unity subordinate to the covering $U_0,U_1,U_2$ with $\psi_j|_{V_j}=1$ for $j=0,1$. Let
$u_j(z):=\frac1{2\pi i}\int_{\C}\frac{(\psi_j f)(\lambda)}{\lambda-z}d\lambda\wedge d\bar\lambda$.  

For $z\in V_1$, the function $\zeta\mapsto\frac{\psi_2(\zeta)f(\zeta)}{\zeta-z}$ is in $C^0_c(\C)$. Thus $u_2(z)$ converges.

For $z\in V_1$,
\begin{align*}
	u_1(z)&=\frac1{2\pi i}\int_\C \frac{(\psi_1f)(\zeta)}{\zeta-z}d\zeta\wedge d\bar\zeta
	=\frac1{\pi}\int_{0}^\infty\int_{0}^{2\pi} (\psi_1 f)(z+re^{i\theta})e^{-i\theta}d\theta dr.
\end{align*}
As $(\psi_1 f)$ is $C^0$ and compactly supported, the integral converges, hence $u_1\in C^0(V_1)$, as desired.

It remains to analyze $u_0$ and to prove that $u_0$ exists.  Let $U_0^\epsilon=U_0\setminus B(0,\epsilon)$ and let $u_0^\epsilon(z)=\frac1{2\pi i}\int_{U_0^\epsilon}\frac{(\psi_0f)(\lambda)}{\lambda-z}d\lambda\wedge d\bar \lambda$. For any $z\in V_1$, the integrand is in $C^0(U_0^\epsilon)$, so the integral exists.  We have, by definition, that  $u_0(z) = \lim_{\epsilon\to 0}u_0^\epsilon(z)$. 

Let $A_{\epsilon_1,\epsilon_2}=\overline{B(0,\epsilon_2)}\setminus B(0,\epsilon_1)$ when $\epsilon_1<\epsilon_2$. For $z\in V_1$, the norm of the denominator is bounded below by $\dist(V_1,U_0)>0$.  It is thus of no concern. 
 Since $f(\zeta)=O(|\zeta|^\alpha)$ then for $z\in V_1$ and some constant $k$,
\begin{align*}
|u_0^{\epsilon_1}(z)-u_0^{\epsilon_2}(z)|&\leq \frac1{2\pi i }\int_{A_{\epsilon_1,\epsilon_2}}\left|\frac{(\psi_0 f)(\zeta)}{\zeta-z}\right| d\zeta\wedge d\bar\zeta
\leq\frac{2k}{\dist(V_1,U_0)}\frac{|\epsilon_1^{\alpha+2}-\epsilon^{\alpha+2}_2|}{\alpha+2}.	
\end{align*}
Thus $u_0^{\epsilon}(z)$ is a Cauchy sequence as long as $\alpha>-2$ and therefore $u_0(z)$ exists.} 
\end{proof} 

 \begin{lemma} \label{lemma:control}
%
%
%
We have that

1) if $\alpha>-2$ but $\alpha\neq -1$ then $K\colon W^\infty_{R,\alpha}\to W^\infty_{\min(\alpha+1,0)}$ is continuous,

2) if $\alpha>-1$ but $\alpha\neq 0$ then $C\colon W^\infty_{R,\alpha}\to W^\infty_{\min(\alpha+1,1)}$ is continuous.
\\
Moreover, their operator norms are bounded by a constant depending only on $\alpha$ when $R\leq 1$.
  \end{lemma}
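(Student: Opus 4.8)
The plan is to reduce both statements to a single pointwise kernel estimate and then to prove that estimate by a decomposition of the plane around the two special points $0$ and $\zeta$.

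Since $K$ and $C$ are linear, continuity is the same as a bound on the operator norm, so it suffices to produce pointwise bounds on $K(f)(\zeta)$ and $C(f)(\zeta)$ in terms of $\|f\|_{W^\infty_{R,\alpha}}$. Writing $z=x+iy$, so that $\frac{1}{2\pi i}\,dz\wedge d\bar z=-\frac1\pi\,dx\,dy$, and using $|f(z)|\le\|f\|_{W^\infty_\alpha}|z|^\alpha$ on the support $\{|z|\le R\}$, I would first record
\[
|K(f)(\zeta)|\le \frac{\|f\|_{W^\infty_\alpha}}{\pi}\, I_\alpha(\zeta),\qquad I_\beta(\zeta):=\int_{|z|\le R}\frac{|z|^\beta}{|z-\zeta|}\,dx\,dy.
\]
For $C$, I would use $\frac{1}{z-\zeta}-\frac1z=\frac{\zeta}{z(z-\zeta)}$ to write $C(f)(\zeta)=\frac{1}{2\pi i}\int f(z)\frac{\zeta}{z(z-\zeta)}\,dz\wedge d\bar z$, giving $|C(f)(\zeta)|\le\frac{|\zeta|}{\pi}\|f\|_{W^\infty_\alpha}\, I_{\alpha-1}(\zeta)$; note $K(f)(0)$ itself converges exactly when $\alpha>-1$. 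Thus everything reduces to one claim: for $\beta>-2$, $\beta\ne -1$, and $R\le 1$, one has $I_\beta(\zeta)\le C_\beta\,|\zeta|^{\min(\beta+1,0)}$ with $C_\beta$ depending only on $\beta$.

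To prove the claim I would first dispose of the far regime $|\zeta|>2R$: there $|z-\zeta|\ge\tfrac12|\zeta|$ on the whole support, so $I_\beta(\zeta)\le\frac{2}{|\zeta|}\int_{|z|\le R}|z|^\beta\,dx\,dy=C_\beta R^{\beta+2}/|\zeta|$ (convergent since $\beta>-2$), which is bounded by $C_\beta|\zeta|^{\min(\beta+1,0)}$ because $R<|\zeta|$ and $\beta+2>0$. In the main regime $|\zeta|\le 2R$ I would split $\{|z|\le R\}$ into: (A) $\{|z|\le\tfrac12|\zeta|\}$, where $|z-\zeta|\ge\tfrac12|\zeta|$ and the integral is $\le C_\beta|\zeta|^{\beta+1}$ (again using $\beta>-2$); (B) $\{|z-\zeta|\le\tfrac12|\zeta|\}$, where $|z|\asymp|\zeta|$ so the integral is $\asymp|\zeta|^\beta\int_{|w|\le|\zeta|/2}|w|^{-1}\,dx\,dy=C|\zeta|^{\beta+1}$; and (C) the rest, where both $|z|\ge\tfrac12|\zeta|$ and $|z-\zeta|\ge\tfrac12|\zeta|$. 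On (C) I would compare $|z-\zeta|$ with $|z|$, splitting once more into the annulus $\tfrac12|\zeta|\le|z|\le 2|\zeta|$ (contributing $C|\zeta|^{\beta+1}$) and $|z|>2|\zeta|$ (where $|z-\zeta|\ge\tfrac12|z|$), so the last piece is $\le C\int_{2|\zeta|}^{R} r^{\beta}\,dr$. This radial integral is exactly where $\beta=-1$ must be excluded: for $\beta>-1$ it is $\le C_\beta R^{\beta+1}\le C_\beta$ (matching $\min(\beta+1,0)=0$), while for $-2<\beta<-1$ it is $\le C_\beta|\zeta|^{\beta+1}$ (the lower endpoint dominates, matching $\min(\beta+1,0)=\beta+1$). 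Collecting the pieces yields the claim, with all constants depending only on $\beta$ and all powers of $R$ bounded by $1$ since $R\le 1$.

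Finally I would read off the two statements. Part 1) is immediate from $|K(f)(\zeta)|\le C_\alpha\|f\|_{W^\infty_\alpha}|\zeta|^{\min(\alpha+1,0)}$, valid for $\alpha>-2$, $\alpha\ne-1$. Part 2) follows from $|C(f)(\zeta)|\le C_\alpha\|f\|_{W^\infty_\alpha}|\zeta|\cdot|\zeta|^{\min(\alpha,0)}=C_\alpha\|f\|_{W^\infty_\alpha}|\zeta|^{\min(\alpha+1,1)}$, where I applied the claim with $\beta=\alpha-1$; the hypotheses $\alpha>-1$ (so $\beta>-2$, and $K(f)(0)$ converges) and $\alpha\ne 0$ (so $\beta\ne -1$) are precisely what the claim requires, and finiteness of the target norm for large $|\zeta|$ is automatic from the far-regime decay. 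The main obstacle is the kernel estimate for $I_\beta$: the delicate point is organizing the decomposition so that the two competing singularities — the weight $|z|^\beta$ at $0$ and the Cauchy kernel at $\zeta$ — are separated cleanly, so that the bounds are uniform in $R$ for $R\le 1$ and collapse to the exponent $\min(\beta+1,0)$, while correctly identifying $\beta=-1$ as the borderline logarithmic case that has to be excluded.
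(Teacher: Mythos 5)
Your proof is correct, and its analytic core is the same device the paper uses: split the plane into a piece near $0$, a piece near $\zeta$, an annulus, and a far region; do power counting in each; and observe that the excluded exponent is exactly the borderline where a logarithm appears. The organization, however, is genuinely different. The paper estimates $K(f)(\zeta)$ and $C(f)(\zeta)$ directly and in parallel, via four explicit integrals (near $0$ and near $\zeta$ for the kernel $\frac{1}{z-\zeta}$, near $\zeta$ for the kernel $\frac1z$, and far from both, where the identity $\frac{1}{z-\zeta}-\frac1z=\frac{\zeta}{z(z-\zeta)}$ supplies the extra decay); the hypotheses $\alpha>-2$, $\alpha\neq-1$ for $K$ and $\alpha>-1$, $\alpha\neq0$ for $C$ emerge integral by integral. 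You instead factor both parts through a single kernel estimate, $I_\beta(\zeta)\le C_\beta|\zeta|^{\min(\beta+1,0)}$ for $\beta>-2$, $\beta\neq-1$, $R\le1$, prove it once, and then read off part 1 at $\beta=\alpha$ and part 2 at the shifted weight $\beta=\alpha-1$, applying the kernel identity to the whole integral rather than only to the far region. This buys modularity and transparency: the two hypotheses of part 2 are seen at a glance ($\alpha>-1$ is what makes $I_{\alpha-1}$ and $K(f)(0)$ converge; $\alpha\neq0$ is precisely $\beta\neq-1$), the uniformity of constants for $R\le1$ is tracked in one place, and the logarithmic borderline is isolated in a single radial integral $\int_{2|\zeta|}^R r^\beta\,dr$. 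What the paper's version offers in exchange is explicit, region-by-region constants for each operator separately; nothing downstream depends on those internals, though, so as a proof of this lemma your route is, if anything, cleaner.
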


\begin{proof}\folded{our old proof}{Let $f\in W^{\infty}_{R,\alpha}$ and let $k:=\|f\|_{ W^{\infty}_{R,\alpha}}$. Consider first the function $C(f)(\zeta)$. For simplicity, consider the large disk $D' = B(0,3R)$.  We know that $f|_{D'\setminus B(0,R)}=0$. We divide  $D'$ into 
into few parts and 
 estimate four integrals in turn, whose sum bounds $C(f)(\zeta)$:
\begin{align*}
\left|\frac{1}{2\pi i}\int_{B(0,|\zeta|/2)} \frac{f(z)}{z-\zeta}dz\wedge d\bar z \right|&\leq \frac{k}{2\pi }\int_{B(0,|\zeta|/2)} \frac{|z|^\alpha}{ |\zeta|/2} (idz\wedge d\bar z) =\frac{k}{2^\alpha(\alpha+2)}|\zeta|^{\alpha+1} 
\quad &(\forall\alpha>-2),\\
\left|\frac{1}{2\pi i}\int_{B(\zeta,2|\zeta|)\setminus B(0,|\zeta|/2)} \frac{f(z)}{z-\zeta}dz\wedge d\bar z \right|&\leq \frac{k\max(3^\alpha,\frac1{2^\alpha})|\zeta|^\alpha}{2\pi }\int_{B(\zeta,2|\zeta|)} \frac{idz\wedge d\bar z}{|z-\zeta|} =  4 k\max(3^\alpha,2^{-\alpha_2})|\zeta|^{\alpha+1},
\\
\left|\frac{1}{2\pi i}\int_{B(\zeta,2|\zeta|)} \frac{f(z)}{z}dz\wedge d\bar z \right|&\leq \frac{k}{2\pi }\int_{B(0,3|\zeta|)}|z|^{\alpha-1}(idz\wedge d\bar z)=\frac{2\cdot 3^{\alpha+1}}{\alpha+1}k |\zeta|^{\alpha+1}\quad &(\forall\alpha>-1),
\end{align*}
and
\begin{align*}\left|\frac{1}{2\pi i}\int_{D' \setminus B(0,2|\zeta|)} f(z) \bigg(\frac{1}{z-\zeta}-\frac{1}{z} \bigg)dz\wedge d\bar z\right|&\leq \frac{k|\zeta|}{2\pi }\int_{D'\setminus B(\zeta,2|\zeta|)} |z|^{\alpha-2}\biggl(\frac{|z|}{|z-\zeta|}\biggr)(idz\wedge d\bar z) \quad&\\
&\leq \frac{3k|\zeta|}{4\pi}\int_{D'\setminus B(0,|\zeta|)}|z| ^{\alpha-2} (idz\wedge d\bar z) \\
&=	       \frac{3k|\zeta|}{\alpha}\bigl((3R)^\alpha-|\zeta|^\alpha\bigr),\quad  &(\forall \alpha\neq 0),
\\
&\leq \begin{cases}\frac{3^{\alpha+1} R^{\alpha}k}{|\alpha|}|\zeta|,& \text{ if }\alpha>0,\\
  \frac{3k}{|\alpha|}|\zeta|^{\alpha+1},& \text{ if }\alpha<0.
\end{cases}
\end{align*}

For the function $K(f)(\zeta)$, we have the first two integrals above, as well as a bound
\begin{align*}
	\left|\frac{1}{2\pi i}\int_{D'\setminus B(\zeta,2|\zeta|)} \frac{f(z)}{z-\zeta}dz\wedge d\bar z\right|& \leq \frac k{2\pi} \int_{D'\setminus B(\zeta,2|\zeta|)} |z|^{\alpha-1}\biggl(\frac{|z|}{|z-\zeta|}\biggr)(idz\wedge d\bar z)\\
	&\leq \frac{3k\max(R^{\alpha+1},1)}{|\alpha+1|}|\zeta|^{\min(\alpha+1,0)}\quad&(\forall\alpha\neq -1).
\end{align*}
The proof is now complete.
}
\end{proof}

\begin{lemma}\label{lemma:C1}
	Let $f\in C^1(\C\setminus\{0\})\cap W_{c,\alpha}^\infty$ for some $\alpha>-2$.    Then  $K(f)\in C^1(\C\setminus\{0\})$ and $\frac{\p K(f)}{\p \bar z}=f$.   
\end{lemma}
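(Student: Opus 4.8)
The plan is to localize away from the origin and peel off the singularity of $f$ there. I would fix a point $\zeta_0\in\C\setminus\{0\}$ and prove that $K(f)$ is $C^1$ near $\zeta_0$ with $\partial K(f)/\partial\bar z = f$ there; since $\zeta_0$ is arbitrary this gives the lemma. Choosing $0<\epsilon<|\zeta_0|/2$ and a smooth cutoff $\eta$ equal to $1$ on $B(0,\epsilon/2)$ and supported in $B(0,\epsilon)$, I would write $f=f_0+f_1$ with $f_0=\eta f$ and $f_1=(1-\eta)f$. Then $f_1\in C^1(\C)$ with compact support --- it vanishes near the origin, the only place where $f$ could fail to be $C^1$ --- while $f_0$ is supported in $\overline{B(0,\epsilon)}$, whose distance to $\zeta_0$ is positive.

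For the regular piece $f_1$, the aim is to invoke the classical $\bar\partial$-Poincaré lemma. After the substitution $z=w+\zeta$ one has $K(f_1)(\zeta)=\frac{1}{2\pi i}\int_\C \frac{f_1(w+\zeta)}{w}\,dw\wedge d\bar w$; since $1/w$ is locally integrable and $\partial_{\bar z}f_1$ is bounded with compact support, differentiation under the integral sign is justified and shows $K(f_1)\in C^1(\C)$ with $\partial_{\bar\zeta}K(f_1)=K(\partial_{\bar z}f_1)$. Applying the Cauchy--Pompeiu formula on a large disc $D\supset\supp f_1$, whose boundary term vanishes because $f_1\equiv 0$ on $\partial D$, then yields $K(\partial_{\bar z}f_1)=f_1$, so $\partial_{\bar\zeta}K(f_1)=f_1$ on all of $\C$.

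For the singular piece $f_0$, I would use that $|f_0(z)|=O(|z|^\alpha)$ with $\alpha>-2$ is integrable over $B(0,\epsilon)$. Hence for $\zeta$ in a neighbourhood $U$ of $\zeta_0$ disjoint from $\overline{B(0,\epsilon)}$, the integrand $f_0(z)/(z-\zeta)$ is absolutely integrable with kernel bounded uniformly in $\zeta\in U$ (the denominator is bounded below by $\dist(U,B(0,\epsilon))>0$). Because $\zeta\mapsto 1/(z-\zeta)$ is holomorphic on $U$, differentiation under the integral shows $K(f_0)$ is holomorphic, in particular $C^1$, on $U$, with $\partial_{\bar\zeta}K(f_0)=0$.

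Adding the two contributions, $K(f)=K(f_0)+K(f_1)$ will be $C^1$ near $\zeta_0$, and $\partial_{\bar\zeta}K(f)=0+f_1=f$ near $\zeta_0$, since $f_1=f$ on $U$. The main obstacle is the classical regularity-and-inversion statement for the $C^1_c$ piece $f_1$ --- the interchange of differentiation and integration together with the vanishing of the Cauchy--Pompeiu boundary term; by contrast the singularity at the origin is harmless, precisely because away from its support the Cauchy transform of $f_0$ is holomorphic and hence invisible to $\partial_{\bar z}$.
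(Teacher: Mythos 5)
Your proof is correct, but it is far more detailed than the paper's, which consists of a single sentence citing H\"ormander \cite[Thm 1.2.2]{Hormander} (the Cauchy transform of a compactly supported $C^1$ function is $C^1$ and inverts $\bar\partial$). That cited theorem requires $f\in C^1_c(\C)$, which the $f$ of the lemma is not --- it is only $C^1$ away from the origin, with an $O(|z|^\alpha)$ singularity there --- so the citation tacitly presumes exactly the reduction you carry out: cut off near the origin, apply the classical result to the globally $C^1$, compactly supported piece $f_1=(1-\eta)f$, and observe that the Cauchy transform of the singular piece $f_0=\eta f$ is holomorphic, hence killed by $\partial_{\bar z}$, away from its support. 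In that sense your write-up supplies the localization the paper leaves implicit, and in addition re-derives the classical $C^1_c$ statement from the Cauchy--Pompeiu formula and differentiation under the integral rather than quoting it. The paper's route buys brevity; yours makes visible where the hypotheses actually enter --- $\alpha>-2$ gives integrability of the singular piece, and $C^1$ regularity off the origin feeds the classical theorem --- and explains why the singularity at the origin is invisible to $\partial_{\bar z}$ at points away from it.
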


\begin{proof}
This result is the stated conclusion of H\"ormander's result \cite[Thm 1.2.2]{Hormander}\end{proof}

\begin{lemma} 
For all $R>0$, we have that

1) if $\alpha>-2$ but $\alpha\not\in \{-1,0\}$ then $\hat K\colon \widetilde W^{1,\infty}_{R,\alpha}\to \widetilde W^{1,\infty}_{R,\min(\alpha+1,0)}$ is continuous,

2) if $\alpha>-1$ but $\alpha\neq 0$, then $\hat C \colon \widetilde W^{1,\infty}_{R,\alpha}\to  \widetilde W^{1,\infty}_{R,\min(\alpha+1,1)}$ is continuous.\\
Moreover, their operator norms are bounded by a constant depending only on $\alpha$ when $R\leq 1$.
\end{lemma}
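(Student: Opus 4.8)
The plan is to reduce both statements to the function-value estimates already proved in Lemma~\ref{lemma:control}, together with the single identity $\partial_{\bar\zeta}K(g)=g$ of Lemma~\ref{lemma:C1}, and to treat the two operators simultaneously by observing that $C(g)=K(g)-K(g)(0)$ differs from $K(g)$ by a $\zeta$-independent constant, so that $dC=dK$. Because $\widetilde W^{1,\infty}_{R,\beta}$ is by definition the closure of $C^1(\C\setminus\{0\})$, and because $\hat K(f)=\chi K(\chi f)$ and $\hat C(f)=\chi C(\chi f)$ send $C^1(\C\setminus\{0\})$ into $C^1(\C\setminus\{0\})$ (by Lemma~\ref{lemma:C1}, $K(\chi f)\in C^1(\C\setminus\{0\})$ when $\chi f\in C^1(\C\setminus\{0\})\cap W^\infty_{c,\alpha}$) while supporting it in $B(0,\tfrac78 R)$, it suffices to prove the quantitative bounds on the dense subspace $C^1(\C\setminus\{0\})$ and then extend by continuity. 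The $W^\infty$ part of the target norm is immediate from Lemma~\ref{lemma:control}, once one checks that multiplication by $\chi$ is bounded on $W^{1,\infty}_{R,\alpha}$ uniformly for $R\le1$; thus the whole content lies in bounding the differentials $d\hat K(f)$ and $d\hat C(f)$ in the weights $\min(\alpha+1,0)-1$ and $\min(\alpha+1,1)-1=\min(\alpha,0)$ respectively.

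By Leibniz, $d\hat K(f)=d\chi\cdot K(\chi f)+\chi\,dK(\chi f)$, and identically for $\hat C$ with $C$ replacing $K$ (using $dC=dK$ in the second term). The first term is supported in the annulus $\tfrac R2\le|z|\le\tfrac78 R$, where $|d\chi|\lesssim R^{-1}$ and where Lemma~\ref{lemma:control} gives $|K(\chi f)|\lesssim R^{\min(\alpha+1,0)}\|f\|$ (resp. $|C(\chi f)|\lesssim R^{\min(\alpha+1,1)}\|f\|$); since $|z|\sim R$ there, the product has exactly the claimed weight, uniformly in $R\le1$. For the main term $\chi\,dK(\chi f)$ I split the differential into its $\bar\partial$- and $\partial$-parts. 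The $\bar\partial$-part is, by Lemma~\ref{lemma:C1}, $\chi\,\partial_{\bar\zeta}K(\chi f)=\chi^2 f$, which lies in the required weight because the weight $\alpha$ of $f$ dominates the target weight and the inclusion $W^\infty_{R,\alpha}\subset W^\infty_{R,\beta}$ has operator norm $R^{\alpha-\beta}\le1$ for $R\le1$.

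The one genuinely delicate contribution is the holomorphic derivative $\chi\,\partial_\zeta K(\chi f)$, a Beurling-type singular integral that is \emph{not} controlled by $L^\infty$-boundedness; this is the main obstacle. When $\alpha>-1$ — all of part~2, and part~1 in that range — I integrate by parts using $\partial_\zeta(z-\zeta)^{-1}=-\partial_z(z-\zeta)^{-1}$: the boundary contribution at $z=0$ is $O(\epsilon^{\alpha+1})\to0$ precisely because $\alpha>-1$, leaving $\partial_\zeta K(\chi f)=K(\partial_z(\chi f))$. Since $\partial_z(\chi f)\in W^\infty_{R,\alpha-1}$ with $\alpha-1>-2$ and $\alpha-1\neq-1$ (as $\alpha\neq0$), Lemma~\ref{lemma:control} places this in $W^\infty_{\min(\alpha,0)}$, which after multiplication by $\chi$ sits in the required weight. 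For the remaining range $-2<\alpha<-1$ of part~1 this route is unavailable, since $\partial_z(\chi f)=O(|z|^{\alpha-1})$ is no longer in $L^1_{\loc}$; here I instead estimate the principal-value integral directly, using that $f$ is locally Lipschitz. Writing $\partial_\zeta K(g)(\zeta)=\tfrac1{2\pi i}\,\mathrm{p.v.}\!\int g(z)(z-\zeta)^{-2}dz\wedge d\bar z$ and exploiting that $\mathrm{p.v.}\!\int_{|z-\zeta|<|\zeta|/2}(z-\zeta)^{-2}dz\wedge d\bar z=0$ by angular symmetry, the near-diagonal part becomes $\int_{|z-\zeta|<|\zeta|/2}(g(z)-g(\zeta))(z-\zeta)^{-2}$, bounded by $|\zeta|^{\alpha-1}\|f\|\int_{|z-\zeta|<|\zeta|/2}|z-\zeta|^{-1}\lesssim|\zeta|^{\alpha}\|f\|$, while the far part is an elementary weighted integral of the same homogeneity, estimated exactly as in Lemma~\ref{lemma:control}; both give the weight $\alpha=\min(\alpha+1,0)-1$. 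In every case the constants depend only on $\alpha$ for $R\le1$, by the same rescaling $z=Rz'$, $\zeta=R\zeta'$ that makes Lemma~\ref{lemma:control} uniform. Assembling the three pieces and extending by continuity from $C^1(\C\setminus\{0\})$ yields both continuity statements.
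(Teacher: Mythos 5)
Your proposal is correct, but its core runs along a genuinely different route from the paper's. Both arguments reduce the $W^\infty$ half of the norm to Lemma \ref{lemma:control}, work on the dense subspace $C^1(\C\setminus\{0\})$ (with Lemma \ref{lemma:C1} guaranteeing the image stays in the closure defining $\widetilde W^{1,\infty}$), and use the cutoff together with the inclusion $W^\infty_{R,\beta}\subset W^\infty_{R,\beta'}$ for $\beta\geq\beta'$ to land in the stated target weight; the difference is in how the differential is estimated. The paper bounds $d(K(f))$ at a point $a$ via a three-fold partition of unity (a piece of $f$ near $0$, a piece near $a$, and the rest): on the piece near $0$ the kernel is regular and the derivative falls on the kernel, while on the other two pieces the derivative is moved onto $f$ by the translation/polar-coordinate trick, so the weight $\alpha-1$ of $df$ is only ever integrated over regions excluding the origin; this treats all $\alpha\in(-2,\infty)\setminus\{-1,0\}$ uniformly. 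You instead split $d=\partial+\bar\partial$, dispose of $\bar\partial K(\chi f)=\chi f$ by Lemma \ref{lemma:C1}, and handle the $\partial$-part in two regimes: for $\alpha>-1$, a global integration by parts $\partial_\zeta K(\chi f)=K(\partial_z(\chi f))$ followed by a second application of Lemma \ref{lemma:control} at weight $\alpha-1$; for $-2<\alpha<-1$ (needed only for $\hat K$), a Beurling-type principal-value estimate using angular cancellation and the Lipschitz bound. Your route buys economy in the range $\alpha>-1$ (it recycles Lemma \ref{lemma:control} rather than redoing integral estimates) and your explicit treatment of the Leibniz term $d\chi\cdot K(\chi f)$ is more transparent than the paper's, where the role of the cutoff is folded into an inclusion-norm computation. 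The paper's route buys self-containedness: it never needs the identity $\partial_\zeta K(g)=\mathrm{p.v.}\frac{1}{2\pi i}\int g(z)(z-\zeta)^{-2}\,dz\wedge d\bar z$, which you assert rather than prove, nor the vanishing of the boundary term at $z=\zeta$ (again an angular cancellation) implicit in your integration by parts. Both of these are classical and can be justified by exactly the near/far splitting you already use, so they are routine additions rather than gaps, but at the level of detail this paper adopts they would need to be written out.
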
 

\begin{proof}That the target spaces are set correctly is guaranteed by Lemma \ref{lemma:C1}, we just need to see continuity.
Suppose now that $f\in C^1(\C\setminus\{0\})\cap W^{1,\infty}_{R,\alpha}$ and that $k=\|f\|_{W^{1,\infty}_{\alpha}}$.
We use the same notation as in the proof of Lemma \ref{lemma:C1}.    

The control on the $W^{\infty}_{\min(\alpha+1,0)}$-norm of $C(f)$ and $\hat K(f)$ comes from Lemma \ref{lemma:control}.
We now tackle the derivative at the point $a$. We assume that $|\xi|=1$.  We take 
\begin{align*}
	V_0&=B(0,\frac{|a|}4),&U_0&= B(0,\frac{|a|}3),&
	V_1&=B(a,\frac{|a|}4),&U_1&= B(a,\frac{|a|}3).
\end{align*} 
Let $\tilde\psi$ be a bump function supported on $B(0,\frac13)$ and equal to $1$ on $B(0,\frac14)$.  Suppose that $C=\sup |d\tilde\psi|$.  Then let $\psi_0(z)=\tilde\psi(\frac z{|a|})$ and $\psi_1(z)=\tilde\psi(\frac{z-a}{|a|})$.  Then $|(D_\xi\psi_0)(z)|=\frac1{|a|} |(D_\xi\tilde\psi)(\frac{z}{|a|})|\leq \frac{C}{|a|}$ and $|(D_\xi\psi_1)(z)|=\frac1{|a|} |(D_\xi\tilde\psi)(\frac{z-a}{|a|})|\leq \frac{C}{|a|}$, thus $|(D_\xi\psi_2)(z)|\leq \frac{2C}{|a|}$ as $\psi_2=1-\psi_0-\psi_1$.  We thus get that
\[|d(\psi_if)(z)|\leq |\psi_i(z)df(z)+f(z)d\psi_i(z)|\leq (1+2C\frac{|z|}{|a|})k|z|^{\alpha-1}.\]

We have
\begin{align*}
	|D_\xi u_0(a)|&=\lim_{h\to 0}\left|\frac{\xi}{2\pi i}\int_{U_0}\frac{(\psi_0f)(\zeta)}{(\zeta-a-h\xi)(\zeta-a)} d\zeta d\bar\zeta\right|\\
	&\leq \lim_{h\to 0} \frac{2k}{(\alpha+2)3^{\alpha+2}}\frac{\radius(U_0)^{\alpha+2}}{\dist(V_1,U_0)^2}\leq \frac{18}{(\alpha+2)3^{\alpha+2}}k|a|^\alpha.
\end{align*}

We can have $D_\xi(\psi_1f)(a+re^{i\theta})\neq 0$ when $a+re^{i\theta}\in U_1= B(a,\frac{|a|}3)\subset B(0,2|a|)$, so only when $\frac{|a|}2\leq |a+re^{i\theta}|\leq 2|a|$.  Hence we have
\begin{align*}
	|D_\xi u_1(a)|&\leq\frac1\pi \int_0^\infty \int_0^{2\pi} |D_\xi(\psi_1f)(a+re^{i\theta})e^{-i\theta}| d\theta dr\\
	&\leq \frac{(1+2C) 2^{|\alpha-1|}}{3}k|a|^\alpha.
\end{align*}

For $\lambda\in U_2$, we have $|a-\lambda|\geq \frac{|a|}{4}$. 
We have
\begin{align*}
	u_2(z)  
	&= \frac1{2\pi i}\int_{\C} \frac{(\psi_2f)(z+\zeta)}{\zeta} d\zeta d\bar \zeta,
\end{align*}
thus
\[	(D_\xi u_2)(a)
	= \frac1{2\pi i}\int_{-a+U_2} \frac{D_\xi(\psi_2f)(a+\zeta)}{\zeta} d\zeta d\bar \zeta.\]
First note that $a+\zeta\in U_2$ implies that $|a+\zeta|\leq 5 |\zeta|$.
Since moreover $U_2\subset B(0,|a|/4)^c$, we have
\begin{align*}
	|(D_\xi u_2)(a)|
	&\leq \frac{10k(1+2C)}{2\pi} \int_{-a+U_2}|a+\zeta|^{\alpha-2}dxdy\\
		&\leq \frac{10k(1+2C)}{2\pi}\int_{B(0,R)\setminus B(0,\frac{|a|}4)}|\zeta|^{\alpha-2}dxdy\\
	&\leq \begin{cases} \frac{10(1+2C)}{|\alpha|4^\alpha}k|a|^\alpha,&\text{ if }\alpha\in (-2,0),\\   \frac{10(1+2C)R^\alpha}{\alpha}k,&\text{ if }\alpha>0.\end{cases}
\end{align*}


Using this information and Lemma \ref{lemma:control}, we find for $\alpha\in(-2,\infty)\setminus\{-1,0\}$ a constant $c_{R,\alpha}$ such that
\[\|K(f)\|_{W^\infty_{\min(0,\alpha)}}\leq  c_{R,\alpha}\|f\|_{W^{\infty}_{\alpha}}\quad \text{ and }\quad   \|d(K(f))\|_{W^\infty_{\min(0,\alpha)}}\leq  c_{R,\alpha}\|f\|_{W^{1,\infty}_{\alpha}}.\]
Moreover, $c_{R,\alpha}\leq c_{1,\alpha}$ if $R\leq 1$.
Overall, we get that, if $f\in W^{1,\infty}_\alpha$  then $d(K(f))\in W^\infty_{\min(0,\alpha)}$. 
To prove the lemma, we need $d( K(f))\in W^\infty_{\min(0,\alpha+1)-1}=W^\infty_{\min(-1,\alpha)}$.  
By introducing the cut off function $\bump$ in the definition of $\hat K$, we can land in the right target space.   Let $c'_{R,\alpha}$ be the operator norm of the inclusion $W^\infty_{\min(0,\alpha)}\subset W^{\infty}_{\min(-1,\alpha)}$. We get
\begin{align*}
\|d(\hat K(f))\|_{W^\infty_{\min(0,\alpha+1)-1}}&=\|d(\hat K(f))\|_{W^\infty_{\min(-1,\alpha)}}\\
&\leq c'_{R,\alpha}\|d(\hat K(f))\|_{W^\infty_{\min(0,\alpha)}} \leq c'_{R,\alpha}c_{R,\alpha}\|f\|_{W^{1,\infty}_{\alpha}},	
\end{align*}
hence $\|\hat{K}(f)\|_{W^{1,\infty}_{\min(0,\alpha)}}\leq c'_{R,\alpha}c_{R,\alpha}\|f\|_{W^{1,\infty}_{\alpha}}$.

Note that $c'_{R,\alpha}=R$ if $\alpha>0$, $c'_{R,\alpha}=R^{\alpha+1}$ if $\alpha\in (-1,0)$, and $c'_{R,\alpha}=1$ if $\alpha\in(-2,-1)$. So certainly $c'_{R,\alpha}\leq 1$ for $R\leq 1$.

The treatment of $C$ is easier.  If $\alpha>-1$ but $\alpha\neq 0$,
\begin{align*}
	\|d(C(f))\|_{W^{\infty}_{\min(1,\alpha+1)-1}}&=	\|d(K(f))\|_{W^{\infty}_{\min(0,\alpha)}}
	\leq c_{R,\alpha}\|f\|_{W_\alpha^{1,\infty}},
\end{align*}
hence $\|C(f)\|_{W^{1,\infty}_{\min(1,\alpha)}}\leq c_{R,\alpha}\|f\|_{W^{1,\infty}_{\alpha}}$.
\end{proof}

For conceptual convenience, let's drop the superscript $\pm$ on the $\theta_i^\pm$.  Since for the moment we deal with the behaviour at $s=\infty$, let $\theta_i=\theta_i^+$.
Let $\mathrm{gaps}:=\{|\theta_i-\theta_j|\mid \theta_i\neq \theta_j\}$.
Let $\theta^{min}_{gap}:=\min \mathrm{gaps}$ and $\theta^{max}_{gap}:=\max \mathrm{gaps}$.  For any $\alpha$, let
\[\hat\alpha=\min(\alpha+1,\theta_{gap}^{min},1-\theta_{gap}^{max}).\]

\begin{corollary}\label{cor:controlf}
Suppose $\alpha\in \Bigl((-1,0)\cup (0,\infty)\Bigr)\setminus \Bigl(\pm\mathrm{gaps}\cup (\mathrm{gaps}-1)\Bigr)$. Then for matrix valued functions, $\widetilde  K_{A^\infty_\mu}\colon \widetilde W^{1,\infty}_\alpha\to \widetilde W^{1,\infty}_{\hat\alpha}$ is continuous.
\end{corollary}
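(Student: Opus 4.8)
The plan is to diagonalise the conjugation so that the matrix estimate reduces to the scalar estimates of the lemma on $\hat K$ and $\hat C$ just proved. Since $A^\infty_\mu=\diag(\theta_1,\dots,\theta_n)$, the matrix $r^{A^\infty_\mu}$ is diagonal with entries $|z|^{\theta_i}$, so conjugation acts entrywise: the $(i,j)$ entry of $r^{A^\infty_\mu}fr^{-A^\infty_\mu}$ is $|z|^{\theta_i-\theta_j}f_{ij}$. By Remark 4, multiplication by $|z|^\beta$ is a bounded isomorphism $\widetilde W^{1,\infty}_\alpha\to\widetilde W^{1,\infty}_{\alpha+\beta}$, so if $f\in\widetilde W^{1,\infty}_\alpha$ the $(i,j)$ entry of $r^{A^\infty_\mu}fr^{-A^\infty_\mu}$ lies in $\widetilde W^{1,\infty}_{\beta_{ij}}$ with $\beta_{ij}:=\alpha+\theta_i-\theta_j$.

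Next I would apply $\widetilde K$ entrywise. For an entry strictly above the diagonal the subtracted constant $\hat K(LT(f))(0)$ is inactive, so the entry is produced by $\hat K$, and part (1) of the lemma sends weight $\beta_{ij}$ to $\min(\beta_{ij}+1,0)$. For an entry on or below the diagonal the subtraction is active, and on $B(0,\tfrac R2)$ (where $\chi\equiv1$) the entry equals $\hat C$ of the corresponding input; part (2) of the lemma then sends weight $\beta_{ij}$ to the improved weight $\min(\beta_{ij}+1,1)$. Conjugating back by $r^{-A^\infty_\mu}$ multiplies the $(i,j)$ output by $|z|^{\theta_j-\theta_i}$, so the final weight of that entry is $\min(\beta_{ij}+1,0)+(\theta_j-\theta_i)$ above the diagonal and $\min(\beta_{ij}+1,1)+(\theta_j-\theta_i)$ on or below it.

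It then remains to check that every final weight is at least $\hat\alpha$, and here the two minima split into the three terms defining $\hat\alpha$. Above the diagonal ($\theta_j>\theta_i$) the two cases give $\alpha+1$ and $\theta_j-\theta_i\ge\theta^{min}_{gap}$; on or below the diagonal they give $\alpha+1$ and $1-(\theta_i-\theta_j)\ge 1-\theta^{max}_{gap}$, while the diagonal itself gives $\min(\alpha+1,1)$. As $\hat\alpha=\min(\alpha+1,\theta^{min}_{gap},1-\theta^{max}_{gap})$ is exactly the minimum of these contributions, the output sits in $\widetilde W^{1,\infty}_{\hat\alpha}$. Continuity is then the composition of three continuous maps with uniformly (in $R\le1$) bounded norms: the conjugation isomorphisms of Remark 4, the maps $\hat K,\hat C$ of the lemma, and the embeddings $\widetilde W^{1,\infty}_\gamma\hookrightarrow\widetilde W^{1,\infty}_{\hat\alpha}$ of Remark 3.

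The excluded values of $\alpha$ are forced by the hypotheses $\beta_{ij}\notin\{-1,0\}$ (for $\hat K$) and $\beta_{ij}\neq0$, $\beta_{ij}>-1$ (for $\hat C$): $\beta_{ij}=0$ gives $\alpha\in\pm\mathrm{gaps}\cup\{0\}$ and $\beta_{ij}=-1$ gives $\alpha\in\mathrm{gaps}-1$, which is precisely the removed set on the stated domain. The main obstacle is the bookkeeping at the diagonal and strictly-lower entries: one must verify that the constant subtraction in $\widetilde K$ genuinely supplies the $\hat C$-improvement there (that is, that $LT$ is taken to include the diagonal and that subtracting the value at $0$ makes the entry vanish to the order $\min(\beta_{ij}+1,1)$ at the origin), since the bare $\hat K$ bound would only give weight $0$ on the diagonal for $\alpha\in(-1,0)$ and the negative weight $\theta_j-\theta_i$ strictly below it, both short of $\hat\alpha$.
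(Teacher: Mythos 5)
Your proposal is correct and follows essentially the same route as the paper: reduce to entries via the diagonal conjugation $(r^{A^\infty_\mu}fr^{-A^\infty_\mu})_{ij}=|z|^{\theta_i-\theta_j}f_{ij}$, apply $\hat K$ to strictly upper entries and $\hat C$ to entries on or below the diagonal, multiply back by $|z|^{\theta_j-\theta_i}$, and check that each resulting weight dominates $\hat\alpha=\min(\alpha+1,\theta^{min}_{gap},1-\theta^{max}_{gap})$, with the excluded values of $\alpha$ arising exactly from the hypotheses of the scalar lemma. Your closing observation about why the subtraction ($\hat C$ rather than $\hat K$) is indispensable on the lower-triangular part is precisely the remark the paper makes immediately after the corollary, and your caveat that the identification with $\hat C$ holds on $B(0,\tfrac R2)$ where $\bump\equiv 1$ is, if anything, slightly more careful than the paper's statement of the entrywise formula.
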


\begin{proof}
We check on the various components. Note that 
\begin{equation*}
	\bigl(\widetilde{K}_{A^\infty_\mu}(f)\bigr)_{ij}
	=\begin{cases}  |z|^{\theta_j-\theta_i}\hat C(|z|^{\theta_i-\theta_j}f_{ij}),& \text{ if }j\leq i,\\
					|z|^{\theta_j-\theta_i}\hat K(|z|^{\theta_i-\theta_j}f_{ij}),& \text{ if }i<j.
	\end{cases}
\end{equation*}

Suppose first that $0\leq \theta_j\leq \theta_i\leq 1$.  Then $\alpha+\theta_i-\theta_j>-1$.  We use here that $\alpha\not\in -\mathrm{gaps}$ and that $\alpha\neq 0$ when $\theta_i=\theta_j$.  Then the commutative diagram
\[\begin{diagram}
W_{\alpha}^{1,\infty}&&\rTo^{f_{ij}\mapsto (\tilde K_{A^\infty_\mu}(f))_{ij}}&&W^{1,\infty}_{\hat\alpha}\\
\dTo_{|z|^{\theta_i-\theta_j}}&&&&\uTo\\
W_{\alpha+\theta_i-\theta_j}^{1,\infty}&\rTo^{\hat C}&W_{\min(\alpha+\theta_i-\theta_j+1,1)}^{1,\infty}
&\rTo^{|z|^{\theta_j-\theta_i}}&W^{1,\infty}_{\min(\alpha+1,1+\theta_j-\theta_i)}
\end{diagram}\]
exhibits $f_{ij}\mapsto (\widetilde  K_{A^\infty_\mu}(f))_{ij}$ as a composition of continuous maps.

Suppose now that $0\leq \theta_i< \theta_j\leq 1$.  Then $\alpha+\theta_i-\theta_j>-2$. We use here both conditions that $\alpha\not\in \mathrm{gaps}$ and that $\alpha+1\not\in \mathrm{gaps}$. 
Then the commutative diagram
\[\begin{diagram}
W_{\alpha}^{1,\infty}&&\rTo^{f_{ij}\mapsto (\tilde K_{A^\infty_\mu}(f))_{ij}}&&W^{1,\infty}_{\hat\alpha}\\
\dTo_{|z|^{\theta_i-\theta_j}}&&&&\uTo\\
W_{\alpha+\theta_i-\theta_j}^{1,\infty}&\rTo^{\hat K}&W_{\min(\alpha+\theta_i-\theta_j+1,0)}^{1,\infty}
&\rTo^{|z|^{\theta_j-\theta_i}}&W^{1,\infty}_{\min(\alpha+1,\theta_j-\theta_i)}
\end{diagram}\]
exhibits $f_{ij}\mapsto (\widetilde  K_{A^\infty_\mu}(f))_{ij}$ as a composition of continuous maps.
The proof is now complete.\end{proof}
 
This corollary allows us to see  the need to use $C$ on the lower triangular part, in view of the conjugation involved in the definition of $\tilde K$ in Equation \eqref{eqn:Ktilde}.  If one had used $K$ throughout instead of $C$, when $\theta_i>\theta_j$, the function $|z|^{\theta_j-\theta_i}K(|z|^{\theta_i-\theta_j}f_{ij})$ would have growth rate $\min(\alpha+1, \theta_j-\theta_i)<0$, so would not be  bounded.

In view of solving Equation \eqref{eqn:def u implicit}, let $Z(u)=\widetilde{K}_{A^\infty_\mu} (ua +a)$.

\begin{lemma} \label{lemma:contraction}
	Suppose   $\alpha$ to be as in Corollary \ref{cor:controlf}, and suppose that $a\in W^{1,\infty}_{\alpha}$.  Then $Z\colon W^{1,\infty}_{R,\hat \alpha}\to W^{1,\infty}_{R,\hat \alpha}$ is   continuous.  Moreover, for a suitable $R<1$, this map is a contraction.
\end{lemma}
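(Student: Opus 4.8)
The plan is to combine the continuity of $\widetilde K_{A^\infty_\mu}$ from Corollary \ref{cor:controlf} with an elementary product estimate in the weighted spaces, together with the gain of a positive power of $R$ coming from the weight inclusions of Remark (3). The structural fact underpinning everything is that $\hat\alpha>0$: since the gaps lie in $(0,1)$ we have $\theta_{gap}^{min}>0$ and $1-\theta_{gap}^{max}>0$, and since $\alpha>-1$ we have $\alpha+1>0$, so every term in $\hat\alpha=\min(\alpha+1,\theta_{gap}^{min},1-\theta_{gap}^{max})$ is positive.

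First I would record the multiplication estimate: if $f\in W^{1,\infty}_\gamma$ and $g\in W^{1,\infty}_\delta$ then $fg\in W^{1,\infty}_{\gamma+\delta}$ with $\|fg\|_{W^{1,\infty}_{\gamma+\delta}}\leq 2\|f\|_{W^{1,\infty}_\gamma}\|g\|_{W^{1,\infty}_\delta}$, which is immediate from $|z|^{-(\gamma+\delta)}|fg|=(|z|^{-\gamma}|f|)(|z|^{-\delta}|g|)$ and the product rule $d(fg)=(df)g+f(dg)$ applied weight by weight. Applying it with $u\in W^{1,\infty}_{R,\hat\alpha}$ and the fixed $a\in W^{1,\infty}_\alpha$ gives $ua\in W^{1,\infty}_{R,\hat\alpha+\alpha}$, still supported in $|z|\leq R$. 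As $\hat\alpha>0$, the inclusion of Remark (3) yields $W^{1,\infty}_{R,\hat\alpha+\alpha}\subset W^{1,\infty}_{R,\alpha}$ with operator norm $R^{\hat\alpha}$, so $ua+a\in W^{1,\infty}_\alpha$ and Corollary \ref{cor:controlf} applies, giving $Z(u)=\widetilde K_{A^\infty_\mu}(ua+a)\in W^{1,\infty}_{\hat\alpha}$. The cut-off $\bump$ built into $\hat K$ and $\hat C$ forces the image to be supported in $B(0,\tfrac78 R)$, so in fact $Z(u)\in W^{1,\infty}_{R,\hat\alpha}$. Continuity of $Z$ is then clear, being the composite of the continuous affine map $u\mapsto ua+a$ with the continuous operator $\widetilde K_{A^\infty_\mu}$.

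For the contraction statement the constant term cancels, $Z(u_1)-Z(u_2)=\widetilde K_{A^\infty_\mu}\bigl((u_1-u_2)a\bigr)$. Writing $N_\alpha$ for the operator norm of $\widetilde K_{A^\infty_\mu}\colon W^{1,\infty}_\alpha\to W^{1,\infty}_{\hat\alpha}$, which by Corollary \ref{cor:controlf} and the uniform-in-$R$ bounds on the norms of $K,C$ for $R\leq 1$ may be taken independent of $R$, and combining the product estimate with the weight inclusion exactly as above, I obtain
\[\|Z(u_1)-Z(u_2)\|_{W^{1,\infty}_{R,\hat\alpha}}\leq N_\alpha\,R^{\hat\alpha}\,\|(u_1-u_2)a\|_{W^{1,\infty}_{R,\hat\alpha+\alpha}}\leq 2N_\alpha\|a\|_{W^{1,\infty}_\alpha}\,R^{\hat\alpha}\,\|u_1-u_2\|_{W^{1,\infty}_{R,\hat\alpha}}.\]
Since $\hat\alpha>0$, the factor $R^{\hat\alpha}$ tends to $0$ as $R\to 0$, so choosing $R<1$ small enough that $2N_\alpha\|a\|_{W^{1,\infty}_\alpha}R^{\hat\alpha}<1$ makes $Z$ a contraction.

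The main obstacle, and the one point where care is genuinely needed, is keeping all constants uniform in $R$: the contraction factor must be driven below $1$ by $R^{\hat\alpha}$ alone, which is legitimate only because the operator norm of $\widetilde K_{A^\infty_\mu}$ is bounded by a constant depending on $\alpha$ for $R\leq 1$ (the content of the uniform bounds in Lemma \ref{lemma:control} and the following lemma). A secondary subtlety is that $u$ lies only in the closure of $C^1(\C\setminus\{0\})$, not in $C^1$ itself; but $a$ is smooth away from $0$, so multiplication by $a$ is a bounded operation preserving this closure, and hence $ua$ and $ua+a$ indeed lie in the spaces to which Corollary \ref{cor:controlf} applies.
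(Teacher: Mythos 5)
Your proof is correct and follows essentially the same route as the paper's: both factor $Z$ as the multiplication map $u\mapsto ua$ landing in $W^{1,\infty}_{R,\alpha+\hat\alpha}$, composed with the inclusion into $W^{1,\infty}_{R,\alpha}$ of operator norm $R^{\hat\alpha}$, composed with $\widetilde K_{A^\infty_\mu}$, so that the Lipschitz constant is $\|\widetilde K_{A^\infty_\mu}\|\,R^{\hat\alpha}\,\|a\|_{W^{1,\infty}_\alpha}$ (up to your harmless factor of $2$) and is driven below $1$ by shrinking $R$, exactly as in the paper. Your explicit remarks on the uniformity of the operator norms in $R\leq 1$ and on multiplication preserving the closure $\widetilde W^{1,\infty}_{R,\hat\alpha}$ are points the paper leaves implicit, but they do not change the argument.
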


\begin{proof} 
The map
\begin{align*}
	R_a\colon W^{1,\infty}_{R,\hat\alpha}&\to W^{1,\infty}_{R,\alpha+\hat\alpha}\\
	u&\mapsto ua
\end{align*}
is continuous with operator norm $\|R_a\|=\|a\|_{W^{1,\infty}_{\alpha}}$. Since $\hat\alpha>0$, the inclusion $\iota\colon W^{1,\infty}_{R,\alpha+\hat\alpha}\to W^{1,\infty}_{R,\alpha}$ is continuous, with operator norm $\|\iota\|=R^{\hat\alpha}$. 
Hence 
\begin{align*}
	W^{1,\infty}_{R,\hat\alpha}&\to W^{1,\infty}_{R,\alpha}\\
	u&\mapsto ua+a
\end{align*}
is continuous and therefore its composition $Z$ with the continuous map $\widetilde  K_{A_\mu^\infty}\colon W^{1,\infty}_{R,\alpha}\to W^{1,\infty}_{R,\hat\alpha}$ is continuous as desired.
 
For $u_1,u_2\in W^{1,\infty}_{R,\hat\alpha}$,  we have
\begin{align*}
	\|Z(u_1)-Z(u_2)\|_{W^{1,\infty}_{\hat\alpha}}&=\|\tilde K_{A^{\infty}_\mu}\bigl((u_1-u_2)a\bigr)\|_{W^{1,\infty}_{\hat\alpha}}\\
	&\leq \|\tilde K_{A^{\infty}_\mu}\| \|\bigl((u_1-u_2)a\bigr)\|_{W^{1,\infty}_{\alpha}}
	\leq \|\tilde K_{A^{\infty}_\mu}\| \|\iota\| \|R_a\| \|u_1-u_2\|_{W^{1,\infty}_{\hat\alpha}}\\
	&=C\|u_1-u_2\|_{W^{1,\infty}_{\hat\alpha}}
\end{align*}
for $C=\|\widetilde  K_{A^{\infty}_\mu}\| R^{\hat\alpha}\|a\|_{W^{1,\infty}_\alpha}$. By choosing $R$ small enough, we can ensure that $C<1$.
\end{proof} 

\begin{corollary} 
	\label{cor:fixed point of T}
Suppose   $\alpha$ to be as in Corollary \ref{cor:controlf}, and suppose that $a\in W^{1,\infty}_{\alpha}$. For suitable $R<1$, guaranteed by Lemma \ref{lemma:contraction}, there exists $u\in \widetilde W^{1,\infty}_{\hat\alpha}$  such that $\widetilde  K_{A^{\infty}}(ua+a)=u$.
\end{corollary}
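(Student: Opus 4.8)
The statement is a direct application of the Banach fixed point theorem, for which Lemma \ref{lemma:contraction} has already done all of the analytic work. The plan is to identify a complete metric space on which $Z$ is a contraction, invoke the contraction mapping principle to obtain a unique fixed point, and then observe that this fixed point automatically lies in the smaller space $\widetilde W^{1,\infty}_{\hat\alpha}$.

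First I would record that, for the values of $R$ under consideration, $W^{1,\infty}_{R,\hat\alpha}$ is complete under its weighted norm (a closed weight condition together with a sup-type norm gives a Banach space), so it is in particular a complete metric space. By Lemma \ref{lemma:contraction}, once $\alpha$ is chosen as in Corollary \ref{cor:controlf} and $a\in W^{1,\infty}_\alpha$, there is an $R<1$ for which $Z$ maps $W^{1,\infty}_{R,\hat\alpha}$ into itself and satisfies $\|Z(u_1)-Z(u_2)\|_{W^{1,\infty}_{\hat\alpha}}\leq C\|u_1-u_2\|_{W^{1,\infty}_{\hat\alpha}}$ with $C=\|\widetilde K_{A^\infty_\mu}\|\,R^{\hat\alpha}\,\|a\|_{W^{1,\infty}_\alpha}<1$. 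The contraction mapping principle then produces a unique $u\in W^{1,\infty}_{R,\hat\alpha}$ with $Z(u)=u$, that is, $\widetilde K_{A^\infty_\mu}(ua+a)=u$.

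It remains to upgrade the membership $u\in W^{1,\infty}_{R,\hat\alpha}$ to $u\in \widetilde W^{1,\infty}_{\hat\alpha}$, the closure of $C^1(\C\setminus\{0\})$; this is the only point that requires a remark, and it comes for free from the structure of $Z$. Since $u=Z(u)=\widetilde K_{A^\infty_\mu}(ua+a)$ lies in the image of $\widetilde K_{A^\infty_\mu}$, and since Corollary \ref{cor:controlf}, resting ultimately on the $C^1$-regularity of the Cauchy transform established in Lemma \ref{lemma:C1}, guarantees that $\widetilde K_{A^\infty_\mu}$ takes values in $\widetilde W^{1,\infty}_{\hat\alpha}$, the fixed point $u$ automatically belongs to $\widetilde W^{1,\infty}_{\hat\alpha}$. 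Equivalently, one may run the contraction directly on the closed subspace $\widetilde W^{1,\infty}_{R,\hat\alpha}$, which $Z$ preserves for the same reason and which is complete as a closed subspace of a Banach space, so that the fixed point is produced inside the tilde space from the outset.

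I expect no genuine obstacle in this step: the difficult analysis, namely the boundedness and mapping properties of the singular Cauchy-type operators $\hat K$, $\hat C$ and of their conjugates $\widetilde K_{A^\infty_\mu}$, together with the explicit contraction constant, has already been carried out in Lemmas \ref{lemma:control}--\ref{lemma:contraction} and Corollary \ref{cor:controlf}. The only things to handle carefully are the bookkeeping of the weight $\hat\alpha$ and the choice of $R$, and the observation that the fixed point inherits its membership in $\widetilde W^{1,\infty}_{\hat\alpha}$ from the target of $\widetilde K_{A^\infty_\mu}$ rather than from any regularity of $a$.
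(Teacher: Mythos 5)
Your proposal is correct and is essentially the paper's (implicit) argument: the paper states this corollary without further proof precisely because it is the Banach fixed point theorem applied to $Z$ on the complete space furnished by Lemma \ref{lemma:contraction}, which is what you do. Your extra remark that the fixed point inherits membership in $\widetilde W^{1,\infty}_{\hat\alpha}$ from the target of $\widetilde K_{A^\infty_\mu}$ (equivalently, by running the iteration in the closed subspace $\widetilde W^{1,\infty}_{R,\hat\alpha}$) is exactly the bookkeeping the paper leaves tacit.
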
 

\begin{proposition} 
There exists a smooth bounded function $u$ such that $\bar\partial_{A^\infty_\mu}u=ua+a$ on $B(0,\frac R2)$.
\end{proposition}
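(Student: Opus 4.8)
The plan is to take the fixed point $u$ produced by Corollary \ref{cor:fixed point of T} and to show that it is bounded, smooth on the punctured disc, and solves the stated differential equation. Boundedness is immediate: since $u\in\widetilde W^{1,\infty}_{\hat\alpha}$ with $\hat\alpha>0$, the definition of the weighted norm gives $|u(z)|\le\|u\|_{W^\infty_{\hat\alpha}}\,|z|^{\hat\alpha}$, so $u$ is bounded on $B(0,\frac R2)$ and in fact $u(z)\to 0$ as $z\to 0$; being in the closure of $C^1(\C\setminus\{0\})$, it is at least continuous away from the origin.

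First I would pass from the integral fixed-point equation \eqref{eqn:def u implicit} to the differential equation \eqref{eqn:elliptic eqn for u}. On $B(0,\frac R2)$ we have the right-inverse property $\bar\partial_{A^\infty_\mu}\circ\widetilde K_{A^\infty_\mu}(f)=f$, so applying $\bar\partial_{A^\infty_\mu}$ to $u=\widetilde K_{A^\infty_\mu}(ua+a)$ formally yields $\bar\partial_{A^\infty_\mu}u=ua+a$. The care needed is that $ua+a$ is a priori only Lipschitz (it lies in $W^{1,\infty}_\alpha$, not in $C^1$), whereas the right-inverse property rests on Lemma \ref{lemma:C1}, stated for $C^1$ data. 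I would handle this by approximation: write $u$ as a limit in $\widetilde W^{1,\infty}_{\hat\alpha}$ of functions $u_n\in C^1(\C\setminus\{0\})$, observe that $a$ is smooth on $\C\setminus\{0\}$ (being the remainder $A_{\bar z}-\tfrac12\tfrac{A^\infty_\mu}{\bar z}$ of a smooth instanton), so that $u_n a+a\in C^1(\C\setminus\{0\})\cap W^{1,\infty}_\alpha$, and then pass to the limit using the continuity of $\widetilde K_{A^\infty_\mu}$ from Corollary \ref{cor:controlf} together with the fact that the Cauchy operator inverts $\bar\partial$ in the sense of distributions. This identifies $\bar\partial_{A^\infty_\mu}u=ua+a$ on $B(0,\frac R2)$, at first distributionally.

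For smoothness it is cleanest to conjugate away the singular connection term. Setting $v:=r^{A^\infty_\mu}\,u\,r^{-A^\infty_\mu}$, the conjugation identity recorded before \eqref{eqn:Ktilde} converts $\bar\partial_{A^\infty_\mu}u=ua+a$ into the plain Cauchy--Riemann equation $\bar\partial v=g$ with $g:=r^{A^\infty_\mu}(ua+a)r^{-A^\infty_\mu}$, where every factor is smooth on $\C\setminus\{0\}$. I would then bootstrap on compact subsets of the punctured disc: $u$ is continuous, hence $g$ is H\"older continuous, so by the Cauchy-kernel regularity of Lemma \ref{lemma:C1} the solution $v$, and therefore $u$, is of class $C^1$; this upgrades $g$ to $C^1$, hence $u$ to $C^2$, and iterating gives $u\in C^\infty(B(0,\frac R2)\setminus\{0\})$. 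Together with the boundedness above, this produces the desired $u$.

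I expect the main obstacle to be the first step, namely legitimately passing from the integral equation to the differential equation when the data $ua+a$ is merely Lipschitz, and, relatedly, controlling the behaviour at the origin. Because the coefficient $\tfrac12\tfrac{A^\infty_\mu}{\bar z}$ and the remainder $a$ are genuinely singular at $z=0$, one can only hope for smoothness on the punctured disc; the point of the condition $\hat\alpha>0$ guaranteed by Lemma \ref{lemma:contraction} is precisely that $u$ extends continuously by $0$ at the origin, which is what will later make $g=1+u$ an admissible, invertible complex gauge transformation carrying \eqref{untransformed} to the normal form \eqref{normalised}.
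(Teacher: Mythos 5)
Your proposal follows essentially the same route as the paper's proof: the candidate is the fixed point from Corollary \ref{cor:fixed point of T}, one identifies it as a weighted-space solution of the differential equation \eqref{eqn:elliptic eqn for u} on $B(0,\frac R2)$, and smoothness is then obtained by an interior bootstrap on compact subsets away from the singularity. The only differences are in the details: the paper bootstraps via $u\in W^{1,\infty}(B)\subset W^{1,2}(B)$ and $L^2$ elliptic regularity instead of your conjugation-plus-H\"older argument (where, incidentally, your inference ``$u$ continuous hence $g$ H\"older'' should invoke that $u$ is locally Lipschitz, as elements of $W^{1,\infty}_{\hat\alpha}$ are), and it leaves implicit the $C^1$-approximation step that you spell out --- which is precisely why the spaces $\widetilde W^{1,\infty}_{R,\alpha}$ were defined as closures of $C^1(\C\setminus\{0\})$.
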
 

\begin{proof} 
The candidate is of course the fixed point $u$ of $Z$ guaranteed by Corollary \ref{cor:fixed point of T}.  
We know that on the smaller domain $B(0,\frac R2)$, the function $u$ is a $\widetilde W^{1,\infty}_{\hat\alpha}$ solution to Equation \eqref{eqn:elliptic eqn for u}.

We now observe $u$ in a compact ball $B\subset B(0,\frac R2)$ away from the singularity.  We have $u\in W^{1,\infty}(B)\subset W^{1,2}(B)$.  Since $u$ solves the elliptic equation \eqref{eqn:elliptic eqn for u}, we can bootstrap and obtain that $u$ is smooth on $B$.
\end{proof} 

Thus, we have the desired  gauge transformation on each disk $w$ = constant; but then,  it is straightforward to make this solution into a parametrised version (parametrised by $w$) and find the desired gauge given in Equation \eqref{normalised}.  

From this gauge, which is asymptotic to a unitary gauge, and in which the $\bar\partial$-operator is in a standard form $\nabla_{\bar z}=\p_{\bar z}+\frac12\frac{A^\infty_\mu}{\bar z}$,  one can pass to a holomorphic gauge where $\nabla_{\bar z}=\p_{\bar z}$ by a gauge transformation
\begin{equation}\label{unitary-to-holomorphic}g = \diag(r^{\theta_1 },r^{\theta_2},\ldots, r^{\theta_n}).\end{equation}
   We use this $g$ as a clutching function to extend the bundle over $z=0$. 

We can now check that the bundle we have produced has the required properties stipulated in Theorem \ref{thm1}. A first remark is that this (complex) gauge transformation preserves the connection in the $w$-direction, and so the bundle $\EE$ one obtains over $z=0$ is the one given by the operator $\bar \partial_w +  A^\infty_{\bar w}$; by our hypothesis, this restricted bundle is a sum of distinct line bundles $L_{t_i^{*,+}}$, and so $\EE$ does indeed have the structure we want over infinity. Secondly, the growth rates of unitary sections in a holomorphic basis as one approaches $z=0$ define  a flag $\EE_1\subset \EE_2\subset \cdots\subset \EE_n$ in $\EE$ over $z=0$. The flatness at infinity of our original connection (the matrices $ A^\infty_\mu, A^\infty_\psi, A^\infty_\phi$ commute), referring to the ordering of the  ${t_i^{*,+}}$,
 tells us that this flag coincides with the filtration
$L_{t_1^{*,+}}\subset L_{t_1^{*,+}}\oplus L_{t_2^{*,+}}\subset \cdots $, so that the flag is in essence an ordering of the line bundles $L_{t_i^{*,+}}$.

Now repeat all of this game at the other end of the cylinder, at  $s= -\infty$; one has opposite signs on the decay rates.   

 We note that the determinant of $\EE$ is a $\U(1)$ instanton, decaying at infinity, and so is flat. From the boundary conditions at infinity, one already knows that the first Chern class on the tori is trivial. One can choose a basis along the cylinder so that the connection on the determinant bundle  is given by $\nabla_r = \partial_r, \nabla_\theta =\partial_\theta +  i\sum_j \theta_j^+ = \partial_\theta +  i\sum_j \theta_j^-$, so that   $\sum_i\theta_i^+ =\sum_i\theta_i^-$. Referring to the construction of the bundle $\EE$, one then has that $\det(\EE)$ is trivial. We note that one can have other choices of trivialisations at the ends, essentially shifting some of the $\theta_i^\pm$ by integers. This shift  corresponds to taking a Hecke transform of the bundle along $z=0$, or $z=\infty$. Independently of choices, however, one has $\sll_{\theta^\pm}(\EE) = 0$.
 
 There remains the question of stability.  One wants to investigate the  integral of the trace of curvature of subbundles. Indeed, as for example in \cite[Eqn.~(45)]{biquardjardim}, 
  this quantity  is the parabolic degree,  the topological degree being the sum of the curvature integral (the parabolic  degree) and a parabolic correction due in essence to the singular nature of the transition function (\ref{unitary-to-holomorphic}). Thus to show stability in our case, we want the curvature integral for a subsheaf on our cylinder to be negative. This negativity follows as in L\"ubke and Teleman \cite{Lubke-Teleman};
   the presence of an ASD connection implies that the curvature of a subbundle must have negative or zero integral. If  the degree of the bundle restricted to the elliptic curves is negative, then the total curvature is negative, as our cylinder is of infinite length, and so this case is automatic; on  the other hand, on the elliptic curves near the ends and so generically, the only possibility for a subbundle of positive or zero degree of $E$, which is a sum of line bundles of degree zero, is for the subbundle to be a sum of a subset of these line bundles, and so of zero degree when restricted to the elliptic curves; the curvature integral then in essence measures the (parabolic) degree in the $\PP^1$ direction.



\subsection{Stable holomorphic bundle to  instanton \ok}
Let us now examine the inverse problem, that of starting with a holomorphic bundle $\EE$ on $\PP^1\times T$, with a decomposition $\EE = \oplus L_{t_i^{*,\pm}}$ at $z= 0,\infty$, and producing an instanton on $\RT$. We suppose that the bundle $\EE$ satisfies the conclusion of Theorem \ref{thm1}: it has Chern classes $c_1(\EE) =0, c_2(\EE) = k$, it is $\theta^\pm$-stable, and is equipped with the decomposition at $z=0,\infty$ given as above by the $ L_{t_i^{*,\pm}}$.  

\begin{theorem} Let $\EE$ as above. There is a unique unitary bundle with anti-self-dual  connection $(E,A)$ on $\RT$ with curvature of finite $L^2$-norm and  trivializations near plus and minus infinity such that 
\begin{itemize}
\item $\EE|_{\RT}\iso (E,\del_A)$,
\item The connection $A$ has flat limits at plus or minus infinity :
\begin{align}  \nabla_\mu =&\ \partial_\mu + iA^{\pm \infty}_\mu  ,\\
\nabla_\psi =&\ \partial_\psi + iA^{\pm \infty}_\psi  ,\nonumber\\
\nabla_\phi =&\ \partial_\phi + iA^{\pm \infty}_\phi  ,\nonumber \end{align}
where the $A^{\pm \infty}$ are constant.
\item The relative Chern classes of $E$ are $c_1(E)= 0, c_2(E) = k$.
\end{itemize}
\end{theorem}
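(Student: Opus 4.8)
The statement is the difficult (\emph{existence}) direction of the Kobayashi--Hitchin correspondence in this non-compact, parabolic setting, and the plan is to produce the connection as the Chern connection of a Hermitian--Einstein metric, following the strategy of Simpson \cite{Simpson-Hodge-structures}, Biquard \cite{Biquard-fibresparaboliquesstables}, and especially Biquard--Jardim \cite{biquardjardim}, who treat the closely analogous doubly-periodic case. Viewing $\RT$ as $\C^*\times T$ via $z = \exp(-s - i\mu)$, the flat cylindrical metric becomes the (singular at $z = 0,\infty$) K\"ahler metric $|dz|^2/|z|^2 + g_T$ on $\PP^1\times T$, and the anti-self-duality equation for a unitary connection compatible with $\del_{\EE}$ is exactly the Hermitian--Einstein equation $\Lambda F_h = 0$ (the slope being zero, as noted after Theorem \ref{thm1}).

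First I would fix a smooth background Hermitian metric $h_0$ on $\EE|_{\RT}$ adapted to the data: near the ends it should be asymptotic to the flat diagonal model built from the line-bundle decompositions $\oplus_i L_{t_i^{*,\pm}}$ and the weights $\theta_i^\pm$, so that its Chern connection already has the prescribed flat limits and its curvature lies in $L^2$. The filtrations $\EE_1\subset\cdots\subset\EE_n$ at $z = 0,\infty$ produced in the proof of Theorem \ref{thm1} determine the parabolic weights that $h_0$ must realize. One then seeks the Hermitian--Einstein metric in the form $h = h_0\, e^{u}$ with $u$ a self-adjoint endomorphism decaying at the ends, reducing the equation $\Lambda F_h = 0$ to a nonlinear second-order elliptic equation for $u$, to be solved by the continuity method (or the Donaldson heat flow) on the non-compact cylinder with weighted control of the type developed in Section \ref{sec:holomorphic_extension}.

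The heart of the argument, and the step I expect to be the main obstacle, is the a priori $C^0$ estimate bounding $|u|$ (equivalently $\tr(h_0^{-1}h)$) uniformly along the continuity path. This is where $\theta^\pm$-stability must be used: following the Uhlenbeck--Yau / Simpson scheme, a failure of the estimate produces in the limit a weakly holomorphic, $\nabla$-invariant destabilizing subsheaf; one must check that this subsheaf respects the boundary decompositions (hence is of the restricted type $\oplus_{i\in I^\pm} L_{t_i^{*,\pm}}$ admitted in the stability definition) and has $\theta^\pm$-slope $\geq \sll_{\theta^\pm}(\EE) = 0$, contradicting stability. Making this work requires the non-compact and singular refinements of the Uhlenbeck--Yau argument, together with the parabolic accounting (the curvature integral equals the parabolic degree, exactly as recalled at the end of Section \ref{sec:holomorphic_extension}), so that slopes are computed correctly in the presence of the singular clutching function \eqref{unitary-to-holomorphic}.

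Once the metric exists, I would upgrade its regularity and asymptotics: elliptic bootstrapping gives smoothness, and the decay of $u$ at the ends, combined with the exponential approach of the background connection to its flat limit, yields the claimed exponential asymptotics \eqref{asymptotics} and finiteness of the $L^2$ curvature; the relative Chern classes $c_1(E) = 0$, $c_2(E) = k$ are then read off from those of $\EE$ as in Theorem \ref{thm1}. Finally, uniqueness is the standard consequence of Hermitian--Einstein theory: two such metrics differ by a positive self-adjoint automorphism $f$ satisfying a $\Lambda\bar\partial\partial\log$-type identity that forces, via the maximum principle on the (infinite-length, but weight-controlled) cylinder and the $\theta^\pm$-stability of $\EE$, that $f$ be a constant scalar, which the determinant normalization pins down; this gives the connection uniquely up to the unitary gauge implicit in the trivializations at $\pm\infty$.
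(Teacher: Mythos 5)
Your proposal is correct in outline and rests on the same conceptual skeleton as the paper's proof --- fix a background metric modelled on the flat diagonal asymptotics determined by the $\oplus_i L_{t_i^{*,\pm}}$ and the weights $\theta_i^\pm$, produce a Hermitian--Einstein metric, and use $\theta^\pm$-stability to kill any destabilizing object arising in the limit --- but the technical route differs. The paper follows Simpson's heat-flow scheme essentially verbatim: it solves the flow $H^{-1}\frac{dH}{dt}=-\sqrt{-1}\Lambda F_H^\perp$ on exhausting compact pieces $[-R,R]\times S^1\times T$ with Dirichlet or Neumann conditions, passes to the limit $R\to\infty$, and then studies $t\to\infty$ via the Donaldson functional; you instead propose a continuity method (with the heat flow as an alternative) in weighted spaces, closer in spirit to Biquard--Jardim. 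The step you flag as the main obstacle --- checking that the weakly holomorphic destabilizing subsheaf ``respects the boundary decompositions'' and hence is of the restricted type entering the stability definition --- is exactly where the paper's real content lies: it argues by trichotomy on the degree of the limiting subbundle restricted to the tori $\{(s,\mu)\}\times T$, ruling out negative degree by uniform $L^p$ bounds, positive degree because such a bundle cannot map nontrivially into a sum of degree-zero line bundles, and zero degree because the subbundle then extends uniformly to infinity as $\oplus_{i\in I^+}L_{t_i^{*,+}}$ and contradicts stability. Your plan names this check but does not supply that case analysis, so to complete your route you would need to reproduce it (or an equivalent weighted-analysis substitute). On the other hand, you explicitly treat uniqueness via the maximum-principle argument for two Hermitian--Einstein metrics, a point the theorem asserts but the paper's written proof passes over in silence; that is a genuine, if standard, addition.
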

  
 \begin{proof} Our proof of this theorem follows a well established procedure, in essence following Donaldson \cite{donaldson-surfaces,Donaldson-boundary}, and Simpson \cite{Simpson-Hodge-structures}. Others who have dealt with the question of existence for non compact varieties include Guo \cite{guo,guo2}, Kronheimer--Mrowka \cite{Mrowka-Kronheimer-1,Kronheimer-Mrowka-embedded-Donaldson}, 
	    Biquard \cite{Biquard-fibresparaboliquesstables}, Owens \cite{owens}, Li--Narasimhan \cite{Li-Narasimhan-HE-Parabolic,Li-Narasimhan-Note-on-HE-Parabolic}.
 
 We want a connection whose $(0,1)$-part  coincides with that of the  bundle $\EE$; as we are building a Chern connection, it is then sufficient to give a metric.  To get this metric, the idea is to start with a suitable initial condition, and apply a heat flow; the infinite time value gives us the desired connection. We are in a different situation from the heat flows in \cite{Simpson-Hodge-structures},  essentially because our manifold  has infinite volume. Nevertheless, many of the same techniques apply, and our proof  follows Simpson's closely.

 {\it An initial condition. } For the bundle $E$, we  restrict the  bundle $\EE$ to  the cylinder $\RT$.
 Let us now consider trivializations near infinity. We  focus on the neighbourhood of $s= +\infty$, the case $s= -\infty$ being similar. For $s$ near infinity, the bundle $\EE$ is a sum   $\oplus_i L_{t_i^{*,+}(s,\mu)}$, given in suitable trivializations by   flat connections with constant connection matrices
 \begin{align}  
\nabla_\psi =&\ \partial_\psi + iA _\psi(s,\mu)  ,\nonumber\\
\nabla_\phi =&\ \partial_\phi + iA _\phi(s,\mu)  ,\nonumber \end{align}
 with constant coefficients along the elliptic curves.
On the complement of $t= +\infty$, we modify the trivialization by the inverse of the gauge transformation considered in Equation (\ref{unitary-to-holomorphic})
\begin{equation}g^{-1} = \diag(r^{-\theta_1^+ },r^{-\theta_2^+},\ldots, r^{-\theta_n^+}) = \diag(e^{-s\theta_1^+ },e^{-s\theta_2^+},\ldots, e^{-s\theta_n^+}).\end{equation}
 This modified trivialization is the unitary trivialization that we use for our initial condition. The corresponding Chern connection is
 \begin{align}  \nabla_\mu =&\ \partial_\mu + iA^{\pm \infty}_\mu  ,\nonumber\\
\nabla_\psi =&\ \partial_\psi + iA _\psi(s,\mu)  ,\nonumber\\
\nabla_\phi =&\ \partial_\phi + iA _\phi(s,\mu)  ,\nonumber\\
   \nabla_s =&\ \partial_s.\nonumber \end{align}
 
 We note that the holomorphic types and so the connection matrices $ iA _\psi(s,\mu), iA _\phi(s,\mu)$ decay at an exponential rate  in $s$ to their limits at infinity. Do the same in a neighbourhood of $s= -\infty$, and   choose  a metric  on a compact region $[-R,R]\times S^1\times T^2$, and patch the three metrics together using a partition of unity. The metric obtained is be our initial  value $H^0$ of the  metric which we use for the heat flow. Implicit in our choices is a homotopy class of trivialisations at infinity, and this fixes the relative Chern classes of the bundle.

  {\it Solving the heat equation on finite spatial intervals $[-R,R]$, for all positive times $t$. } Given a Hermitian metric $H$, let $F_H$ be the curvature of the Chern connection, and $\Lambda  F_H^\perp$ be its K\"ahler component. Following Simpson \cite{Simpson-Hodge-structures}, and Donaldson \cite{Donaldson-boundary} one solves the heat flow for the metric
 \begin{equation}
 	\label{eqn:heatflow}
	 H^{-1} \frac{dH}{dt} = - \sqrt{-1}\Lambda F_H^\perp.
 \end{equation}
  The proof of Simpson (\cite[Section 6]{Simpson-Hodge-structures}) goes through verbatim. One starts with a compact spatial interval $s\in [-R,R]$; Simpson shows that Equation \eqref{eqn:heatflow} can be solved for all time $t$ with either Dirichlet or Neumann boundary conditions, obtaining a flow $H_R(t)$ for $t\in[0,\infty)$. Simpson shows that $\sup(|\Lambda F_H^\perp|)$ decays in $t$; Donaldson shows in addition that it decays exponentially. 
There are   two maximum principles for the theory of instanton heat flow which are of great use. The first is on the functional $\sigma(H_1, H_2)$, a pseudo-distance between metrics; the second is on the quantity $|\Lambda F|$. Both are subharmonic on solutions, and so  the maximum is reached either at the boundary of the manifold, or in the initial conditions. 

  {\it Solving the heat equation on the full space $\RT$.}
  Again, following Simpson, Proposition 6.6 verbatim, one can solve the heat equation on the full space $\RT$, as a limit of the $H_R$; the solution is bounded on finite time intervals. (For his lemma 6.7, one can use the function $s^2$.) Let us call the solution $H$.
  
  {\it Convergence and stability}. One then must ask what happens as the time $t$ goes to infinity to our solution $H$. Let us concentrate on a large interval $s\in [-R,R]$. Again following \cite[Sections 5 and 7]{Simpson-Hodge-structures}, one can define the Donaldson functional $M(H^0, H(t))$; it decreases along the flow, and indeed satisfies
  \[\frac{d}{dt} M(H^0, H(t)) = -\int_{[-R,R]\times S^1\times T} |\Lambda F_{H(t)}^\perp|^2_{H(t)}.\]
  If $M(H^0, H(t))$ is unbounded below, one has, as in \cite[Section 5]{Simpson-Hodge-structures}, writing $H(t)$ as $H^0\exp (h(t))$, a subsequence $t_i$ such that $h(t_i)$ converges, after rescaling to unit norm, to a limit $u$ whose (constant) eigenspaces allow one to define a projection onto a holomorphic subbundle $E'$ of positive degree. The degree here is the one given by the curvature integral, that is, the parabolic degree. This projector is invariant in $R$. One now has one of three possibilities:
  
  1) The subbundle has negative degree when restricted to the tori $\{(s,\mu)\}\times T$. Hence the norm of the  curvature on each $\{(s,\mu)\}\times T$ is bounded below uniformly, and so the limiting connection has $L^p$ norm bounded below by a constant times $R$. As all our constructions involve limits with uniform bounds in $L^p$, this situation cannot occur.
  2) The subbundle has zero degree when restricted to the tori $\{(s,\mu)\}\times T$.  For $s$ large, the subbundle is basically a uniform choice $\oplus_{i\in I^+} L_{t_i^{*,+}(s,\mu)}$  for a subset $I^+$ of $\{1,\ldots,n\}$, and so the subbundle extends to infinity in a uniform way.
  By stability, the parabolic degree of the bundle would then have to be zero or negative, a contradiction
  3) The subbundle has positive degree when restricted to the tori $\{(s,\mu)\}\times T$. This case is impossible, as the bundle would have to map in a non-zero way to $\oplus_i L_{t_i^{*,+}(s,\mu)}$. 
  
  One then has, as in \cite[Section 7]{Simpson-Hodge-structures},  that the Donaldson functional is bounded below on ${[-R,R]\times S^1\times T}$, and so there is a subsequence of $t_i$ with $|\Lambda F_{H(t_i)}^\perp|^2_{H(t_i)}$ tending to zero. Elliptic theory then gives a smooth limit, solving the Hermite--Einstein equation. 
   \end{proof}

\section{The bottom horizontal arrows: Fourier--Mukai transforms}\label{sec:FourierMukai}
\subsection{Correspondences between sheaves}

We now consider the bottom row of our sets of equivalent data. It is associated to the diagram
\begin{diagram}
&&\PP^1\times T \times T^*\\
&\ldTo<{\pi_1}&&\rdTo<{\pi_2}\rdTo(4,2)>\varphi\\
\PP^1\times T &&&&
\PP^1\times T^* &\rTo_{\psi}& T^*.\end{diagram}
For $a\in\PP^1$, let $D_a$ be the divisor $\{a\}\times T^*$ on $ \PP^1\times T^*$; the $D_a$ are all linearly equivalent. 

We  relate the items on the bottom row of Diagram \eqref{basic-diagram}:
\begin{enumerate}  
\item  Pairs  $(\FFl,\rho) $ on $ T^*$, consisting of a holomorphic vector bundle $\FFl$ and a meromorphic section $\rho$ of its automorphism bundle.  We write $\ch(\FFl ) = k + \ell [\omega_{T^*}] $ (so that it has rank $k$, degree $\ell$) and ask that the determinant of the automorphism $\rho$ has   zeroes at $n$ points $t^{*,+}_j$, and poles at $n$ other points $t^{*,-}_j$. We  assume that these singularities are simple, in the sense that there exists $g_j$, $h_j$ holomorphic and invertible matrix valued functions such that $\rho$ has the local form $g_j(t-t_j^{*,+})\diag((t-t_j^{*,+}),1,1,\ldots,1)h_j(t-t_j^{*,+})$ at $t_j^{*,+}$ and the form  $g_j(t-t_j^{*,-})\diag((t-t_j^{*,-})^{-1},1,1,\ldots,1)h_j(t-t_j^{*,-})$ at $t_j^{*,-}$;   $\rho$ is invertible everywhere else.

We focus on a case of interest to us, when  $\ell = n$;  in addition, we ask that the sheaves be $\widehat\theta^\pm$-stable, in the sense of section 2; the slope $\sll_{\widehat\theta^\pm}(\FFl ,\rho)$ is then  zero.

\item  Sheaves $\KKl $ on $ \PP^1\times T^*$, supported on a curve $C$ in the linear system $[kT^* + n\PP^1]$. The Chern character of $\KKl$ is 
\[\ch(\KKl)= n[\omega_{T^*}] + k [\omega_\PP] + (\ell-n)[\omega_{T^*}\wedge \omega_\PP].\]
The curve $C$ intersects each $ \PP^1\times \{t^*\}$ in  a discrete set of points; it intersects $\{0\}\times T^*$ in the $n$ points $t^{*,+}_j$, and $\{\infty \}\times T^*$ in the $n$ other points  $t^{*,-}_j$, all with multiplicity one. In the neighbourhood of $\{0,\infty \}\times T^*$, the curve $C$ is reduced and intersects $\{0,\infty \}\times T^*$ transversely; the sheaf $\KKl$ is a line bundle over the curve near these points. 

Again, we focus on the case $\ell = n$. We define a $\widehat\theta^\pm$-degree for $(C,\KKl)$ , and its subobjects $(C', \KKl ')$;  for the pair $(C', \KKl ')$, with $C'$ intersecting $z= 0,\infty$ in $n'$ points, and 
$\ch(\KKl ') =  k'[\omega_{T^*}] + n' [\omega_{\PP}] + (\ell'-n') [\omega_{\PP}\wedge \omega_{T^*}]$
the degree becomes
\[\delta^{T^*}_{\widehat\theta^\pm}(C',\KKl') = \ell' - \sum_{t_i^{*,+}\in C'\cap \{z=0\}} \!\!\! \!\!\!  \widehat\theta_i^+ \quad + \sum_{t_i^{*,-}\in C'\cap \{z=\infty\}}  \!\!\! \!\!\! \widehat\theta_i^-.\]
 The pair $(C,\KKl)$ has  $\widehat\theta^\pm$-degree zero; it is stable when there is no subobject $(C',\KKl')$ with positive or zero $\widehat\theta$-degree. We ask that the pair be stable.
\item  Bundles $\EE $ on $ \PP^1\times T$, such that for all $t^*\in T^*$,  there is no non-trivial map for generic $a\in \PP^1$ from $L_{t^*}$  to 
$\EE |_{\{a\}\times  T}$.  The Chern character of $\EE$ is $n  +(\ell-n)[\omega_\PP]  - k [\omega_{\PP}\wedge  \omega_T]$. Over $\{0\}\times T$, the bundle decomposes as the holomorphic sum of degree zero line bundles $\oplus_j L_{t^{*,+}_j}$
over $\{\infty\}\times T$, it decomposes as  $\oplus_j L_{t^{*,+}_1}$. 

Again, we focus on the case for which $\ell-n= 0$, so that $\EE$ has first Chern class zero. One has a notion of parabolic degree, for subbundles $\EE'$ of $\EE$ which are of rank $n'$, first Chern class $d[\omega_\PP]$, and so of first Chern class zero on the $\{a\}\times T$. These must also be sums    $\oplus_{j\in J^+(\EE')}L_{t^{*,+}_j}$ at $a= 0$, and $\oplus_{j\in J^-(\EE')}L_{t^{*,-}_j}$ at $a=\infty$, and  we set, for our parabolic degree
\[\delta_{  \theta^\pm}(\EE') =  d -\sum_{j\in J^+(\EE')} { \theta}^+_i +  \sum_{j\in J^-(\EE')} {  \theta}^-_i.,\]
as in Equation \eqref{eqn:deltathetapm}.
One has $\delta_{\theta^\pm}(\EE) = 0.$
Again, one asks that the bundle be stable, in the sense that any subbundle $\EE'$ must have $\delta_{\theta^\pm}(\EE') < 0.$
\end{enumerate}

The aim of this section is to show the following theorem.
\begin{theorem}\label{holomorphic-equivalences}
These three types of object are equivalent, that is they are related by bijective transforms, under the hypotheses of degree and  stability that we have given.
\end{theorem}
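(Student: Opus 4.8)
The plan is to factor the claimed equivalence into the two transforms indicated in Diagram~\eqref{basic-diagram}: a spectral correspondence relating $(\FFl,\rho)$ to $(C,\KKl)$, and a relative Fourier--Mukai transform relating $(C,\KKl)$ to $\EE$.

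First I would set up the spectral correspondence. Viewing the meromorphic automorphism $\rho$ as an eigenvalue problem, I homogenize in the $\PP^1$-coordinate $[z_0:z_1]$ and define the spectral curve $C$ as the vanishing locus of the characteristic section $\det(z_1\rho - z_0\,\mathbf{I})$, suitably twisted so that it extends across the points where $\rho$ degenerates or blows up. Because $\rho$ has $k$ eigenvalues generically and $\det\rho$ has divisor $\sum_i t_i^{*,+}-t_i^{*,-}$, the curve $C$ has bidegree $(n,k)$, meeting $\{0\}\times T^*$ in the simple zeros $t_i^{*,+}$ and $\{\infty\}\times T^*$ in the simple poles $t_i^{*,-}$; the hypothesis that these singularities are simple (local form $g_j\diag((t-t_j)^{\pm 1},1,\dots,1)h_j$) guarantees $C$ is reduced and transverse to $\{0,\infty\}\times T^*$ there. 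The eigensheaf $\KKl$ is $\FFl$ equipped with its module structure in which the spectral parameter $\zeta=z_0/z_1$ acts by $\rho$; it is a line bundle on $C$ wherever the cover is unramified, in particular near the boundary by simplicity. The inverse sends $(C,\KKl)$ to $\FFl=\psi_*\KKl$, with $\rho$ given by multiplication by $\zeta$ (meromorphic, with poles over $z=\infty$). A Grothendieck--Riemann--Roch computation checks that $\ch(\FFl)=k+\ell[\omega_{T^*}]$ corresponds to $\ch(\KKl)=n[\omega_{T^*}]+k[\omega_\PP]+(\ell-n)[\omega_{T^*}\wedge\omega_\PP]$, and that $\rho$-invariant subsheaves $\FFl'$ correspond exactly to subobjects $(C',\KKl')$, with $\deg\FFl'=\ell'$ and the orders of $\det\rho'$ at $t_i^{*,\pm}$ recording $C'\cap\{z=0,\infty\}$. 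Since both sides use the same weights $\widehat\theta^\pm$, the degrees $\delta_{\widehat\theta^\pm}$ agree termwise, so $\widehat\theta^\pm$-stability is preserved in both directions.

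Next I would construct the Fourier--Mukai transform. Using the Poincar\'e bundle $\PPP$ on $T\times T^*$ pulled back to $\PP^1\times T\times T^*$ as kernel, I set $\EE=R\pi_{1*}(\pi_2^*\KKl\otimes\PPP)$, and in the other direction recover $\KKl$ from $\EE$ by the inverse kernel. This is a relative Fourier--Mukai transform over $\PP^1$, so Mukai's equivalence of derived categories on the elliptic curve $T^*$ applies fiberwise. The content is a WIT check: for generic $a\in\PP^1$ the slice $\KKl|_{\{a\}\times T^*}$ is a length-$n$ torsion sheaf (as $C\to\PP^1$ has degree $n$) supported at $n$ points, whose transform is the rank-$n$ sum of degree-zero line bundles $\oplus_j L_{t_j^*(a)}$ on $T$; this is IT$_0$, and one verifies the index stays $0$ across all of $\PP^1$, so $\EE$ is a genuine rank-$n$ bundle. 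Over $a=0,\infty$ the slices degenerate to the $n$ marked points, giving the prescribed decompositions $\oplus_j L_{t_j^{*,+}}$ and $\oplus_j L_{t_j^{*,-}}$. The condition that $\Hom(L_{t^*},\EE|_{\{a\}\times T})=0$ for generic $a$ is precisely the WIT$_0$ condition for the inverse transform, ensuring it lands back in sheaves supported on a curve rather than spread out; so the two transforms are mutually inverse by Mukai, and a second GRR computation matches $\ch(\KKl)$ with $\ch(\EE)=n+(\ell-n)[\omega_\PP]-k[\omega_\PP\wedge\omega_T]$.

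It remains to match the weighted stability conditions across the Fourier--Mukai step, where the weights shift. For a subobject with $\ch(\KKl')$ of $\PP^1$-degree $n'$ and total degree $\ell'$, the transform $\EE'$ has $c_1(\EE')=(\ell'-n')[\omega_\PP]$, i.e. $d=\ell'-n'$. Writing $\sum_{+},\sum_{-}$ for the sums over the marked points of $C'$ at $z=0,\infty$ and using $\widehat\theta_i^+=\theta_i^++1$, $\widehat\theta_i^-=\theta_i^-$, one computes
\[\delta^{T^*}_{\widehat\theta^\pm}(C',\KKl')=\ell'-\sum_{+}(\theta_i^++1)+\sum_{-}\theta_i^-=(\ell'-n')-\sum_{+}\theta_i^++\sum_{-}\theta_i^-=\delta_{\theta^\pm}(\EE'),\]
so the $+1$ shifts in $\widehat\theta^+$ exactly absorb the difference $\ell'-(\ell'-n')=n'$ between the two notions of degree, and stability is again preserved in both directions. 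Combining the three equivalences and their compatibility with subobjects yields the bijections on the stable loci. The hardest part, I expect, is the local analysis at $z=0,\infty$ and at the singular points of $C$: one must verify that $\KKl$ is genuinely a line bundle on $C$ near the marked points (so the Fourier--Mukai slice is the expected length-one sheaf), and that the WIT index is constant across $\PP^1$ including over the degenerate fibers, so that the transform of $\KKl$ is locally free with exactly the stated line-bundle decompositions at $0$ and $\infty$. Controlling the transform uniformly through these degenerations, consistently with the simple-singularity normal forms of $\rho$ and the bidegree bookkeeping, is where the care of the argument concentrates.
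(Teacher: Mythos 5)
Your route coincides with the paper's: a spectral correspondence between $(\FFl,\rho)$ and $(C,\KKl)$ (the paper realizes $\KKl$ as the cokernel of $\rho-z\id\colon \psi^*\widetilde\FFl(-D_\infty)\to\psi^*\FFl$, which is your eigensheaf), followed by a relative Fourier--Mukai transform with kernel $\PPP$, and your weight computation showing that the shift $\widehat\theta_i^+=\theta_i^++1$, $\widehat\theta_i^-=\theta_i^-$ exactly absorbs the difference between $\ell'$ and $\ell'-n'$ is the paper's computation verbatim. The gap is in the Fourier--Mukai step, at the sentence ``one verifies the index stays $0$ across all of $\PP^1$.'' This is not a routine verification, and without stability it is false. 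The hypotheses allow the spectral curve $C$ to contain entire vertical components $\{a\}\times T^*$ for $a\neq 0,\infty$ (this happens whenever $\rho$ has a constant eigenvalue; the paper notes explicitly that such components are possible). Over such a fiber the slice of $\KKl$ is not a finite-length torsion sheaf but has positive rank on $T^*$, and your generic-slice $\mathrm{IT}_0$ argument says nothing there: for instance, if $(\FFl,\rho)$ were $(\FFl',\rho')\oplus(\LL,a\cdot\id)$ with $\deg\LL=0$, the slice of $\KKl$ over $\{a\}\times T^*$ is the degree-zero line bundle $\LL$, whose transform on an elliptic curve is concentrated in degree one, so $R^1(\pi_1)_*(\pi_2^*\KKl\otimes\PPP)\neq 0$ and the transform is not locally free. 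Your closing paragraph locates all the difficulty at $z=0,\infty$ and the singular points of $C$, which misses this case entirely; those points are in fact handled easily by the simple-singularity normal forms.

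This is precisely where the paper has to work, and where stability enters as an \emph{input} to the vanishing theorem rather than as bookkeeping to be matched afterwards. The paper isolates the generalized $a$-eigensheaf $\FFl_a=\ker(\rho-a\id)^m$, reduces the required vanishing $R^1(\pi_1)_*(\pi_2^*\KKl\otimes\PPP)=0$ to showing $H^1(\{a\}\times T^*,\widetilde\KKl\otimes L)=0$ for every degree-zero $L$, where $\widetilde\KKl$ is a quotient bundle of $\FFl$ all of whose weights vanish (because $a\neq 0,\infty$), and then proves this from stability: a nonzero section of $\widetilde\KKl^*\otimes L^*$ would, after iterating $\rho^*$, produce a $\rho^*$-invariant subsheaf of non-negative degree, contradicting stability of the dual pair $(\FFl^*,\rho^*)$. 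Note that my direct-sum example above is exactly a non-stable (only polystable) pair, so stability genuinely separates the cases where your claim holds from those where it fails. To repair your proposal you would need to add this lemma, or else impose the unstated hypothesis that $C$ has no vertical components, which would exclude admissible objects. (A minor separate point: for the inverse transform the bundle $\EE$ is $\mathrm{WIT}_1$, not $\mathrm{WIT}_0$ --- the inverse is $R^1(\pi_2)_*(\pi_1^*\EE\otimes\PPP^*)$, with $R^0$ and $R^2$ killed by the $\Hom(L_{t^*},\EE|_{\{a\}\times T})=0$ hypothesis, which is also what gives local freeness of $\FFl$ via Grauert.)
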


The passage from the first to the second item is a spectral curve construction and the passage from the second to the third is via a Fourier--Mukai transform.

\subsubsection{From pair to sheaf on spectral curve\ok}
 Let $\widetilde\FFl $ be the subsheaf of sections $s$ of $\FFl $ such that $\rho(s)$ is holomorphic; it has degree $\ell-n$. (If $\rk\FFl=1$, then $\widetilde\FFl=\FFl(-t_1^{*,-}-\cdots-t_n^{*,-})$.) Let $D_a$ be the divisor $\{a\}\times T^*$. There are two maps $I, \rho$ from $\psi^*\widetilde{\FFl}(-D_\infty)$ to $\psi^*{\FFl}$; the first is the inclusion, and the second is given by the automorphism $\rho$. One defines $\KKl $ over $\PP^1\times T^*$ by the exact sequence
\begin{equation}\label{FtoK}
\begin{diagram}
0& \rTo& \psi^*\widetilde{\FFl}(-D_\infty)&\rTo^{\rho -z I}& \psi^*{\FFl}&\rTo^\pi&\KKl&\rTo& 0.\end{diagram}
\end{equation}
 The Chern character of $\KKl $ is $n[\omega_{T^*}] + k [\omega_\PP] + (\ell-n)[\omega_{T^*}\wedge \omega_\PP]$;  $\KKl $ is supported on the curve $C$ cut out by $\det(\rho -zI) = 0$, which lies in the linear system $k[ {T^*}] + n [ \PP^1]$. Near the points $t_i^{ *,\pm}$, the normal form of the endomorphism $\rho$ tells us that the curve is reduced, intersects $z=0,\infty$ transversely, with $\KKl$ a line bundle near those points.

We note one feature of the curve $C$ supporting $\KKl $, when $\KKl $ is obtained as above: its fibre for the projection $C\rightarrow T^*$ is everywhere discrete. In other words, $C$  contains no ``horizontal" component of the form $\PP^1\times \{t^*\}$. (On the other hand, ``vertical" components of the form $\{a\}\times T^*, a\neq 0,\infty$ are possible.) 

Subbundles of $\FFl$  invariant under $\rho$ give directly subobjects $(C',\KKl')$ of $(C,\KKl)$, the degrees coincide, and the stability criteria are a direct translation.

\subsubsection{From sheaf on spectral curve to a pair\ok}  
Conversely, starting out with the sheaf $\KKl $, one sets $\FFl  = \psi_*(\KKl )$, and, assuming $z$ is the affine coordinate of $\PP^1$,  $\rho = \psi_*(\times z)$.  This transform $\SSS'\colon \KKl\mapsto (\psi_*\KKl,\psi_*(\times z))$ is the inverse of the   spectral transform $\SSS\colon (\FFl,\rho)\mapsto \KKl$ defined by Equation \eqref{FtoK}, as we now prove.

First, note that $\psi^*\FFl$ is trivial on the $\PP^1$ fibers, so $\psi_*\psi^*\FFl=\FFl$.  On the other hand, $\psi^*\widetilde\FFl(-D_\infty)$ is isomorphic to $\OO(-1)^{\oplus k}$ on the fibers, hence $R^i\psi_*(\psi^*\widetilde\FFl(-D_\infty))=0$ for all $i$.   Hence applying $\psi_*$ to the exact sequence \eqref{FtoK} defining $\KKl$, we obtain $\FFl=\psi_*\KKl$.  Now it only remains to see that $\psi_*(\times z)=\rho$ to prove that 
 $\SSS'\circ\SSS(\FFl,\rho)=(\FFl,\rho)$.  To prove this equation, we note that we have a commuting diagram
 \begin{equation}\begin{diagram} 
\psi^*{\FFl}&\rTo^\pi&\KKl\\
 \dTo<\ \ \rho &&\dTo<\ \ z&\\
\psi^*{\FFl}&\rTo^\pi&\KKl\\
 \end{diagram}  \end{equation}
 since $\pi\circ(\rho-zI) = 0$, and so $\pi\circ \rho = \pi \circ zI= z\pi$. But then on the pushdown, with the isomorphism given by $\pi$, we have
$\psi_*(\times z)=\rho$.

Now we aim to prove that $\SSS\circ\SSS'(\KKl)=\KKl$.
 Consider the sheaf ${\KKl}(-D_\infty)$; there are two maps of ${\KKl}(-D_\infty)$ into $\KKl$; the first is simply the inclusion $i$, while the second is the multiplication by the coordinate $z$. Let $I, \rho$ respectively denote the maps that they induce on direct images:
 \begin{equation} I, \rho\colon \psi_*({\KKl}(-D_\infty)) \rightarrow \psi_*{\KKl}. \end{equation}
 Now lift back; one can define ${\widehat\KKl}$ by
 \begin{equation}
  \begin{diagram}
 0&\rTo& \psi^*\psi_*({\KKl}(-D_\infty))(-D_\infty) &\rTo^{(\rho-zI )}& \psi^*\psi_*{\KKl} &\rTo^{ev}& {\widehat\KKl}  &\rTo& 0.   
 \end{diagram}
\end{equation}
It is then straightforward to see that ${\widehat\KKl}$ is isomorphic to $\KKl$. Indeed, we have that the evaluation/restriction map $ ev\colon \psi^*\psi_*{\KKl} \rTo \KKl$ is zero on $\psi^*\psi_*({\KKl}(-D_\infty))(-D_\infty)$, essentially because $ev(zs) -z\ ev(s)  = 0$. Thus we get a map on the quotient $\mu\colon {\widehat\KKl}\rTo \KKl $. On the other hand, pushing down the defining sequence of ${\widehat\KKl}$ gives an isomorphism $\psi_*{\KKl}\rightarrow \psi_*{\widehat \KKl}$, whose composition $ev_*$ with $\mu_*$ is also an isomorphism. Thus $\mu_*$ is an isomorphism on sections, which, since the support of $\KKl$ is discrete on the fiber, tells us that $\mu $ is also.  Thus $\SSS\circ\SSS'(\KKl)=\KKl$.

\subsubsection{\texorpdfstring{Fourier--Mukai: from sheaf on a curve to bundle on $\PP^1\times T$}{Fourier--Mukai: from sheaf on a curve to bundle on PxT}}\label{sec:FM sheaf on curve to bundle}
Now consider the passage between the last two items on our list. The sheaf $\EE $ is obtained as the Fourier--Mukai transform of $\KKl $, as follows. Let $\Delta $ be the diagonal in $T\times T^*$ obtained by the identification of $\R^2$ and $(\R^2)^*$ given by the symplectic form. Let $\eta_\Delta$ be the Poincar\'e dual of the homology class of $\eta$.  The normalized Poincar\'e bundle on $T\times T^*$ is represented by the divisor $\Delta-\{p_0\}\times  T^*-T\times \{p_0^*\}$ and its restriction to $T\times \{t^*\}$ is the degree zero line bundle $L_{t^*}$.  Let  $\PPP$ be its pullback to  $\PP^1\times T \times T^*$, a line bundle represented by the divisor $\PP^1\times (\Delta - \{p_0\}\times T^* - T \times \{p^*_0\}) $.     We pull back $\KKl $ to $\PP^1\times T \times T^*$ and tensor it with $\PPP$. We then have 
\begin{align*}
	\ch(\pi^*_2(\KKl)\otimes\PPP)&=\Bigl(n[\omega_{T^*}]+k[\omega_{\PP}]+(\ell-n)[\omega_{T^*}\wedge\omega_{\PP}]\Bigr)\Bigl(1+\eta_\Delta-[\omega_T]-[\omega_{T^*}]-[\omega_T\wedge\omega_{T^*}]\Bigr)\\
	&=n[\omega_{T^*}]+k[\omega_{\PP}]+(\ell-n-k)[\omega_{T^*}\wedge \omega_\PP]+k[\omega_\PP]\Bigl(\eta_\Delta-[\omega_T]\Bigr)
	-k[\omega_\PP\wedge\omega_T\wedge\omega_{T^*}]
\end{align*}
and so
\begin{align*}
	(\pi_1)_*\ch(\pi^*_2(\KKl)\otimes\PPP)  = n+(\ell-n)[\omega_{\PP}]-k[\omega_\PP\wedge \omega_T].
\end{align*}

Since the relative tangent bundle of the projection $\pi_1$ is trivial, this cohomology class, by the Grothendieck--Riemann--Roch theorem, is the Chern character of $(\pi_1)_!(\pi_2^*(\KKl )\otimes \PPP)$; there remains the problem of representing this element derived category
 by a single locally free sheaf.

\begin{proposition}
Under the hypotheses governing $(C,\KKl)$, the sheaf $R^1 (\pi_1)_*(\pi_2^*(\KKl )\otimes \PPP) =0$, and so 
$(\pi_1)_!(\pi_2^*(\KKl )\otimes \PPP)= (\pi_1)_*(\pi_2^*(\KKl )\otimes \PPP)$.
\end{proposition}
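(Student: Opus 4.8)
The plan is to verify the vanishing fibrewise over the projection $\pi_1\colon \PP^1\times T\times T^*\to \PP^1\times T$, whose fibres are copies of $T^*$, and then to pass from fibrewise information to the sheaf $R^1(\pi_1)_*$. Over a point $(p,t)$ the restriction of $\pi_2^*\KKl\otimes\PPP$ is $\KKl|_{\{p\}\times T^*}\otimes L_t$, where $L_t$ is the degree-zero line bundle on $T^*$ determined by $t$ through the Poincar\'e bundle. The naive hope is that each such restriction is a torsion sheaf, hence has vanishing $H^1$; this is indeed the case for every $p$ over which $C$ meets $\{p\}\times T^*$ in a finite set, since there $\KKl|_{\{p\}\times T^*}$ is supported on points. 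The difficulty is that $C$ may contain vertical components $\{a\}\times T^*$, over which the restriction acquires a piece supported on all of $T^*$; there the Euler characteristic of the fibre jumps, so $\pi_2^*\KKl\otimes\PPP$ is \emph{not} flat over $\PP^1\times T$ and the base-change theorem cannot be invoked directly.

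To get around the failure of flatness I would work with the locally free (hence flat) resolution obtained by pulling back the defining sequence \eqref{FtoK} and tensoring with the line bundle $\PPP$:
\begin{equation*}
0\to \mathcal{A}\xrightarrow{\ \rho-zI\ }\mathcal{B}\to \pi_2^*\KKl\otimes\PPP\to 0,
\end{equation*}
with $\mathcal{B}=(\psi\circ\pi_2)^*\FFl\otimes\PPP$ and $\mathcal{A}=(\psi\circ\pi_2)^*\widetilde\FFl\otimes\PPP\otimes \OO_{\PP}(-1)$, both locally free. Since the fibres of $\pi_1$ are curves, $R^2(\pi_1)_*$ vanishes on all three sheaves, so the long exact sequence gives $R^1(\pi_1)_*(\pi_2^*\KKl\otimes\PPP)=\coker\bigl(R^1(\pi_1)_*\mathcal{A}\to R^1(\pi_1)_*\mathcal{B}\bigr)$. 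For the flat sheaves $\mathcal{A},\mathcal{B}$ the top direct image $R^1$ commutes with base change, so tensoring this cokernel with the residue field $k(p,t)$ and using right-exactness identifies $R^1(\pi_1)_*(\pi_2^*\KKl\otimes\PPP)\otimes k(p,t)$ with the cokernel of $(\rho-pI)_*$ on the $H^1$'s of the fibres, which the long exact sequence on $T^*$ in turn identifies with $H^1\bigl(T^*,\KKl|_{\{p\}\times T^*}\otimes L_t\bigr)$. By Nakayama it therefore suffices to prove that this last group vanishes for every $(p,t)$.

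For $p$ off the finite set of vertical values this is the torsion-sheaf vanishing already noted. The heart of the argument is the case $p=a$ with $\{a\}\times T^*\subset C$. Here $\KKl|_{\{a\}\times T^*}$ is the cokernel of $\rho-aI\colon\widetilde\FFl\to\FFl$, and away from the singular points of $\rho$ it is a $\rho$-invariant quotient of $(\FFl,\rho)$ on which $\rho$ acts as the scalar $a$; since $a\neq 0,\infty$ this induced automorphism has neither zeros nor poles, so its $\widehat\theta^\pm$-degree equals its ordinary degree. Because $\rho$ acts as a scalar on this quotient, \emph{every} subsheaf of it is $\rho$-invariant, and pulling subsheaves back to $\FFl$ shows, via the $\widehat\theta^\pm$-stability of $(\FFl,\rho)$ (of slope $0$), that every nonzero quotient of $\KKl|_{\{a\}\times T^*}$ has strictly positive degree. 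By Serre duality on the elliptic curve $T^*$ the group $H^1(T^*,\KKl|_{\{a\}\times T^*}\otimes L_t)$ is dual to $\Hom(\KKl|_{\{a\}\times T^*},L_{-t})$; any nonzero homomorphism there would have image at once a positive-degree quotient of $\KKl|_{\{a\}\times T^*}$ and a subsheaf of the degree-zero line bundle $L_{-t}$, which is impossible. Hence the group vanishes for all $t$.

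I expect the main obstacle to be precisely this vertical-component analysis: one must identify the fibre $\KKl|_{\{a\}\times T^*}$ with a genuine $\rho$-invariant quotient pair of $(\FFl,\rho)$, correctly accounting for the torsion that the horizontal part of $C$ contributes at the points $t^{*,\pm}_i$ (such torsion is harmless for $H^1$ but must be separated from the locally free part), and then translate the pair-stability inequality into the statement that all quotients have positive degree. The flatness bookkeeping in the first two paragraphs is routine once the resolution is in place, and the generic fibres are immediate; it is the interplay between the meromorphic structure of $\rho$ and the stability condition along the vertical loci that requires care.
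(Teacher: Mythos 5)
Your proposal is correct and follows essentially the same route as the paper: reduce to vanishing of $H^1$ on fibres (immediate when $C$ meets the fibre in a finite set), and on a vertical component $\{a\}\times T^*$ use that $\rho$ acts as the scalar $a\neq 0,\infty$ --- so the weights vanish, and $\widehat\theta^\pm$-stability of $(\FFl,\rho)$ forces the relevant degree inequality --- combined with Serre duality on the elliptic curve; the torsion issue you flag is handled in the paper exactly as you anticipate, by saturating $(\rho-a\id)(\widetilde\FFl)$ to a $\rho$-invariant subbundle $\II$ and replacing $\KKl|_{D_a}$ by the locally free quotient $\widetilde\KKl=\FFl/\II$. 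The only cosmetic difference is that the paper runs the duality on the other side (a nonzero section of $\widetilde\KKl^*\otimes L^*$ contradicting stability of the dual pair $(\FFl^*,\rho^*)$) rather than your quotient-side formulation via $\Hom(\KKl|_{D_a},L_{-t})$, and it does not spell out the base-change bookkeeping you supply.
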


\begin{proof} When $C$ has no components of the form $\{a\}\times T^*$, this proposition is straightforward: the fiber of the map consists of discrete points, possibly with multiplicity, and there is no $H^1$. One then has  $(\pi_1)_!(\pi_2^*(\KKl )\otimes \PPP)= (\pi_1)_*(\pi_2^*(\KKl )\otimes \PPP)$ which we  then set to be $\EE$; the spaces of sections over the fibers have constant rank, and so $\EE$ is locally free.

One must now deal with the case when there are components of $C$ of the form $\{a\}\times T^*$, if they exist; this can only occur, by our hypotheses, if $a\neq 0,\infty$. The argument is local over $\PP^1$, so one may assume that there is a unique such vertical component.

We have just seen that the datum of $(\FFl,\rho)$ is equivalent to that of $\KKl$. We note that the (generalized) eigenspace associated to $a$ can be ``isolated'' into a subsheaf: indeed, consider the sheaf $\FFl_a$ defined as the kernel of $(\rho-a\id)^m$, for a power $m$ large enough for it to stabilise.

One then has a diagram on $\PP^1\times T^*$:

\begin{equation}\begin{diagram} 
0&\rTo&\psi^*\widetilde\FFl_a(-D_\infty)&\rTo &\psi^*\widetilde\FFl(-D_\infty)&\rTo&\psi^*\widetilde\QQ_a&\rTo&0\\
&&\dTo<\ \ (\rho-z\id)&&\dTo<\ \ (\rho-z\id)&&\dTo&&\\
0&\rTo&\psi^*\FFl_a&\rTo & \psi^*\FFl&\rTo&\psi^*\QQ_a&\rTo&0\\
&&\dTo&&\dTo&&\dTo&&\\
0&\rTo&\KKl_a&\rTo & \KKl&\rTo&\LL_a&\rTo&0\\
 \end{diagram}\end{equation}
 Here the last row are the cokernels of the maps $\rho-z\id$; the sheaf $ \KKl_a$ is supported over $D_a=\{a\}\times T^*$; the sheaf $\LL_a$ is supported on a curve $C'$ which near $D_a$ has discrete fibers over $\PP^1$.


We want to show that $R^1(\pi_1)_*(\pi_2^*\KKl\otimes \PPP )=0$. It suffices to show that $H^1(D_A, \KKl\otimes L) = 0$ for any line bundle $L$ of degree zero. One has over $D_a$ that $ \KKl =   \FFl/ (\rho-a\id)(\widetilde\FFl)$.  
In turn, $(\rho-a\id)(\widetilde\FFl)$ is a subsheaf of $\FFl$, and completes to a $\rho$-invariant subbundle $\II $ of $\FFl$  of the same rank (invariant in the sense that $\rho(\II\cap \widetilde\FFl)\subset \II$); $\KKl $ then maps to a bundle $\widetilde\KKl = \FFl/\II $ of the same rank, essentially quotienting out torsion. It suffices that $H^1( \{a\}\times T^*, \widetilde\KKl\otimes L) = 0$.

This vanishing then follows from stability. Indeed:
\begin{lemma} Let 
\begin{equation*}\begin{diagram}[size=1.5em]
0&\rTo& \II &\rTo &  \FFl&\rTo& \widetilde \KKl &\rTo&0
\end{diagram}
\end{equation*}
be our sequence of $\rho$-invariant bundles. Then   $H^1(\{a\}\times T^*,  \widetilde\KKl\otimes L)=0$ for any line bundle $L$ of degree zero.
\end{lemma}

This vanishing can be seen to follow from the stability of $(\FFl,\rho)$, or, what is equivalent, the stability of $(\FFl^*,\rho^*)$.  We note that  $H^1(T^*,  \widetilde\KKl\otimes L)$ is dual to  $H^0(T^*,  \widetilde\KKl^*\otimes L^*)$; the bundle $ \widetilde\KKl^*\otimes L^*$ is an invariant subbundle of $\FFl^*\otimes L^*$, and the weights on this bundle are all zero, as the sheaf $\widetilde\KKl$ is associated to the eigenvalue $a\neq 0, \infty$. Thus the degree of $ \widetilde\KKl^*$ for stability is the ordinary degree, and must be negative; the same holds for any invariant subbundle. On the other hand, a  section of $\widetilde\KKl^*\otimes L^*$ would give a subsheaf of positive or zero degree; it might not be invariant, but iterating $\rho^*$ on it gives a map, generically an isomorphism,  from a bundle of zero degree to an invariant subbundle of $ \widetilde\KKl^*\otimes L^*$, which would then be of non-negative degree, a contradiction. \end{proof}

We thus have  that $(\pi_1)_!(\pi_2^*(\KKl )\otimes \PPP) = (\pi_1)_*(\pi_2^*(\KKl )\otimes \PPP)$, and we set this to be $\EE $.
It  is locally free, by Grauert's theorem, as the fibers have constant rank. On $\{a\}\times T$, near $a=0,\infty$, the bundle $\EE$ is a sum of line bundles of degree zero, essentially determined by the curve $C$ over $a \in \PP^1$. In particular, over $a =0$, the bundle $\EE$ is a sum
$\oplus_j L_{t^{*,+}_j}$; over $\{\infty\}\times T$, it decomposes as  $\oplus_j L_{t^{*,+}_1}$. As these sets of line bundles are supposed disjoint, there is no line bundle of degree zero that maps to the bundles over $a= 0,\infty$, and so for generic $a$, as promised. 

 \subsubsection{\texorpdfstring{Fourier--Mukai: from bundle on $\PP^1\times T$ to a sheaf on a curve}{Fourier--Mukai: from bundle on PxT to a sheaf on a curve}}
Using results of Mukai \cite{FM}, one has an inverse to this operation:

\begin{prop} Suppose that over $\{0\}\times T$, the bundle $\EE$ is a sum
$\oplus_j L_{t^{*,-}_j}$ and that  over $\{\infty\}\times T$, it decomposes as  $\oplus_j L_{t^{*,-}_j}$, with these two sets of line bundles disjoint.
The transform $\KKl \mapsto \EE $ has as inverse: $ \EE  \mapsto  R^1(\pi_{2})_*(\pi_1^*(\EE )\otimes {\PPP^*})$. 
\end{prop}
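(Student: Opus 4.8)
The plan is to recognize the operation $\KKl\mapsto\EE=(\pi_1)_*(\pi_2^*\KKl\otimes\PPP)$ as the relative Fourier--Mukai transform over $\PP^1$ attached to the Poincar\'e kernel on $T\times T^*$, and then to read off the inverse from Mukai's inversion theorem. Since $\PPP$ is pulled back from $T\times T^*$, the correspondence is constant along the $\PP^1$ direction, so Mukai's equivalence applies fibrewise and the functor
\[
\Phi(-)=\mathbf{R}(\pi_1)_*\bigl(\pi_2^*(-)\otimes\PPP\bigr)
\]
is an equivalence of bounded derived categories $D^b(\PP^1\times T^*)\to D^b(\PP^1\times T)$. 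As $\dim_{\C}T=1$, Mukai's composition law \cite{FM} reads $\mathbf{R}(\pi_2)_*\bigl(\pi_1^*(\Phi(-))\otimes\PPP\bigr)\cong(-1_{T^*})^*[-1]$, so the quasi-inverse of $\Phi$ is $\mathbf{R}(\pi_2)_*\bigl(\pi_1^*(-)\otimes\PPP\bigr)[1]$ up to the twist $(-1_{T^*})^*$; the point of the statement is that passing from $\PPP$ to $\PPP^*$ absorbs this twist and leaves the bare $R^1$.

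First I would record that $\KKl$ satisfies the index condition $\mathrm{WIT}_0$: the preceding proposition gives $R^1(\pi_1)_*(\pi_2^*\KKl\otimes\PPP)=0$, and there is no higher direct image since the fibres of $\pi_1$ are one-dimensional, so $\Phi(\KKl)=\EE$ is genuinely a sheaf placed in degree $0$. Applying the inverse functor to $\EE=\Phi(\KKl)$ then gives $\mathbf{R}(\pi_2)_*(\pi_1^*\EE\otimes\PPP)\cong(-1_{T^*})^*\KKl[-1]$. Using the standard identity $\PPP^*\cong(\mathrm{id}_T\times(-1)_{T^*})^*\PPP$ for the normalized Poincar\'e bundle, the change of variables $\mathrm{id}_T\times(-1)_{T^*}$ converts $\PPP$ into $\PPP^*$ at the cost of a second $(-1_{T^*})^*$; since $(-1_{T^*})$ is an involution, the two twists cancel and one obtains
\[
\mathbf{R}(\pi_2)_*\bigl(\pi_1^*\EE\otimes\PPP^*\bigr)\cong\KKl[-1].
\]
Because $\pi_1^*\EE\otimes\PPP^*$ is a sheaf, the cohomology sheaves of the left-hand side are exactly the $R^i(\pi_2)_*$, so this isomorphism forces $R^0(\pi_2)_*(\pi_1^*\EE\otimes\PPP^*)=0$ and $R^1(\pi_2)_*(\pi_1^*\EE\otimes\PPP^*)\cong\KKl$, which is the asserted inverse.

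It is worth noting that the defining hypothesis on $\EE$ gives an independent, hands-on verification that $\EE$ is $\mathrm{WIT}_1$, and is what guarantees that such $\EE$ genuinely lie in the image of $\Phi$. The fibre of $\pi_2$ over $(a,t^*)$ is $\{a\}\times T\times\{t^*\}\cong T$, on which $\PPP^*$ restricts to $L_{t^*}^*$, so by cohomology and base change the stalk of $R^0(\pi_2)_*$ is controlled by $\Hom(L_{t^*},\EE|_{\{a\}\times T})$. By hypothesis this vanishes for every $t^*$ and generic $a$; and since $\EE|_{\{a\}\times T}$ is semistable of degree $0$ on the elliptic curve $T$, the vanishing of $H^0$ forces $H^1$ to vanish as well (the Euler characteristic of $\EE|_{\{a\}\times T}\otimes L_{t^*}^*$ is zero), so the derived transform is concentrated in degree $1$ generically, consistent with the conclusion above.

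The main obstacle is the normalization bookkeeping rather than any single hard estimate: one must make Mukai's inversion rigorous in the relative setting over $\PP^1$ (where flatness and cohomology-and-base-change keep the fibrewise equivalence coherent across the family), and one must verify carefully that the cohomological shift by $[1]$ together with the replacement $\PPP\rightsquigarrow\PPP^*$ exactly cancels the $(-1_{T^*})^*$ of the composition law, so that the plain $R^1$, with no residual shift or automorphism, recovers $\KKl$.
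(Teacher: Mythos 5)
Your proof is correct and follows essentially the same route as the paper: both reduce the statement to Mukai's inversion theorem from \cite{FM}, applied relatively over $\PP^1$, together with the observation that the fibres are one-dimensional so the relevant transforms are concentrated in a single degree (WIT conditions). The only cosmetic difference is that the paper computes the convolution kernel $\mathbf{R}(\pi_{13})_*\bigl(\pi_{12}^*\PPP^*\otimes\pi_{23}^*\PPP\bigr)$ on $\PP^1\times T\times T^*\times T$ directly and identifies it with the shifted structure sheaf of the diagonal in $T\times T$, whereas you use the symmetric composition law with $\PPP$ in both slots and then cancel the two resulting $(-1_{T^*})^*$ twists via $\PPP^*\cong(\mathrm{id}_T\times(-1)_{T^*})^*\PPP$ --- the same computation in different packaging.
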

\begin{proof} Let $\PP^1\times T\times T^* \times T$ have projections $\pi_{12}, \pi_{23},\pi_{13} $ onto $\PP^1\times T\times T^*$, $\PP^1\times T^*\times T$, $\PP^1\times T\times T$, and $\pi_{1}, \pi_{2},\pi_{3} $ onto $\PP^1\times T $, $\PP^1\times T^* $, $\PP^1\times T$.

Lifting to $\PP^1\times T\times T^* \times T$, and applying results of \cite[Section 2]{FM} the composition of the derived functors $\EE \mapsto {\ca R}_*\pi_2 (\pi_1^*\EE \otimes \pi_{12}^*{\PPP^*})$ and $K\mapsto {\ca R}_*\pi_3 (\pi_2^*K\otimes \pi_{23}^*\PPP)$ is given by 
$\EE \mapsto {\ca R}_*\pi_3 (\pi_1^*\EE \otimes {\ca H})$, where $H$ is the ${\ca R}_*\pi_{13}(\pi_{12}^*\PPP\otimes \pi_{23}^*\PPP)$. By the results of section 3 of the same paper, $H$ is the lift to $\PP^1\times T\times T$ of the structure sheaf of the  diagonal $(x, x)$ in $T\times T$. We see that the derived functor  in this case is concentrated in one degree (that is, on $H^1$) and so we  have our sheaf.  

Indeed, the inverse transform, on generic $\{a\}\times T$, essentially gives us the decomposition as a sum of line bundles. Thus, the curve $C$ over $a\in \PP^1$ encodes the line bundles of degree zero which map non-trivially to  $\EE $ over $\{a\}\times T$, so that $C$ intersects $\{0\}\times T^*$ at the points ${t_i^{*,+}}$, counted with multiplicity, and $\{\infty\}\times T^*$ at the points ${t_i^{*,-}}$, again counted with multiplicity. The Chern character of $\KKl$ tells us that these multiplicities need to be one, since the $ {t_i^{*,+}},{t_i^{*,-}}$ are all distinct, as we have supposed. The curve $C$ is  smooth and reduced near $z=0, \infty$, and the sheaf $\KKl$ is of rank one over the smooth locus of $C$.

Note that if one supposes that the sets $\{t_i^{*,-}\}$, $\{t_i^{*,+}\}$ are disjoint, this precludes the possibility of vertical fibres $\PP^1\times \{t^*\}$ in $C$. (In particular, if $\EE  = {\EE '} \oplus L_{t^*_0}$, with $L_{t^*_0}$ pulled back from $T$, then the spectral curve would have a vertical component over $t^*_0$; this corresponds to adding a flat $\U(1)$ connection to an instanton of rank $(n-1)$.) The fact that this decomposition is different at zero, infinity, tells us that the curve contains no lines $\PP^1\times \{t^*\}$. 

 It also forces  $H^0(\PP^1\times T, \EE \otimes L_{t^*}) = 0$, and dually,   $H^2(\PP^1\times T, \EE \otimes L_{t^*}) = 0$; this then means that $H^1(\PP^1\times T, \EE \otimes L_{t^*})$ is of constant dimension in $t^*$, which forces $\FFl $ to be locally free, by Grauert's theorem. In turn, then $\KKl$ is supported on the curve $C$, with support strictly of codimension one. The Chern character of the pushdown tells us that the curve has intersection of degree $n$ on the generic $\{a\}\times T^*$, and the decompositions over $a=0,\infty$ tell us that this intersection is transverse, as advertised, with the   sheaf $\KKl$ being a line bundle on the curve near $a=0,\infty$.\end{proof}

Summarizing, the correspondences give the relation of Chern characters:
\bigskip

\hspace{0.75truein} \label{tableau}\begin{tabular}{|c | c |c |} \hline $(\FFl , \rho)$& $\KKl $& $\EE $\\  \hline 
 $k + \ell[\omega_{T^*}]$ & $k[\omega_{T^*}] + n [\omega_{\PP}] + (\ell-n)[\omega_{\PP}\wedge \omega_{T^*}]$ & $n   +(\ell-n)[\omega_\PP]  - k [\omega_{\PP}\wedge  \omega_T]$\\
  $\det(\rho)$  has $n$ poles&&\\ \hline  \end{tabular}
  \bigskip
  
  We recall that we are specializing to $\ell = n$.

\subsubsection{Correspondences between stability criteria}
We have stability criteria for the first and the third types of data above. Let us see how those criteria correspond.

For the pairs $(\FFl , \rho)$, one only considers $\rho$-invariant subsheaves $\FFl'$;  the restriction $\rho'$ of $\rho$ to $\FFl'$ has a well defined determinant, which has an order $\ord_{t_i^{*,\pm}}(\det(\rho'))$ at the singularities $t_i^{*,\pm}$. If $\ch({\FFl}') = k' +\ell'[\omega_{T^*}]$, we define a degree
\[\delta^{T^*}_{\widehat\theta^\pm}(\FFl', \rho') = \ell' - \sum_{i} \widehat\theta_i^+ \ord_{t_i^{*,+}}(\det(\rho'))+\widehat\theta_i^- \ord_{t_i^{*,-}}(\det(\rho')),\]
 and dividing by the rank $k'$, a slope. The pair is (semi-)stable if and only if for all invariant $\FFl'$, the slope of $\FFl'$ is less (resp. less than or equal to) that of $\FFl $. 

We note that the condition of invariance is quite strong. Indeed, for example, switching perspectives from $(\FFl , \rho)$ to the associated $(C, \KKl )$, there are no proper non-zero invariant subsheaves $\FFl '$ if $C$ is irreducible (it is then necessarily reduced, as it is reduced near $z=0,\infty$). More generally, if $C$ is reduced, invariant subsheaves $\FFl'$ are associated to components $C'$ of $C$; in particular, the order of the determinant of $\rho'$ at $t_i^{*,+}$ is in our case $0$ if $C'$ does not intersect $z=0$ at that point, and $1$ if it does; similarly, at $t_i^{*,-}$, the order of the determinant is  $0$ if $C'$ does not intersect $z= \infty$ at that point, and $-1$ if it does. Thus for the pair $(C', \KKl ')$, with $C'$ intersecting $z= a$ in $n'$ points, and 
$\ch(\KKl ') =  k'[\omega_{T^*}] + n' [\omega_{\PP}] + (\ell'-n') [\omega_{\PP}\wedge \omega_{T^*}]$
the degree becomes
\[\delta^{T^*}_{\widehat\theta^\pm}(\FFl', \rho') = \ell' - \sum_{t_i^{*,+}\in C'\cap \{z=0\}}  \widehat\theta_i^+ \quad + \sum_{t_i^{*,-}\in C'\cap \{z=\infty\}}  \widehat\theta_i^-.\]
 
The stability condition for pairs $(C,\KKl)$ is simply a translation of the one for $(\FFl,\rho)$.

On the other hand, for bundles $\EE $ over $\PP^1\times T$, of degree zero when restricted to the $\{a\}\times T$, with
$\ch(\EE ) =   n   +(\ell-n)[\omega_\PP]  - k [\omega_{\PP}\wedge  \omega_T]$
then one has a stability condition as follows: one looks at all subbundles $\EE '$ with $\ch(\EE ') =   n'   +(\ell'-n')[\omega_\PP]  - k' [\omega_{\PP}\wedge  \omega_T]$. These bundles are    topologically trivial over each $\{a\}\times T$, and near zero or  infinity   in $\PP^1$. If $\EE |_{\{a\}\times T}  = \oplus_iL_{t_i^*(a)}$, then $\EE '|_{\{a\}\times T}$ is given by summing a subset of the $L_{t_i^*(a)}$. In other words, $\EE '$ corresponds to a component $C'$ of the spectral curve. The (parabolic) $\theta^\pm$-degree then becomes
\[\delta_{\theta^\pm}(\EE') = (\ell'-n') - \sum_{t_i^{*,+}\in C'\cap \{a=0\}}  \theta_i^+ \quad +\sum_{t_i^{*,-}\in C'\cap \{a=\infty\}}  \theta_i^-.\]
In the general case, the slopes of our two sets of data are not terribly well related. The reason is that for bundles $\EE $, one is dividing $\delta^T_\theta$ by $n'$, while for $\FFl '$, one is dividing $\delta^{T^*}_\theta$ by $k'$. On the other hand, in the case which is of interest to us, of bundles associated to instantons, one has $\sum_i \theta_i^+=\sum_i\theta_i^-$, as well as $\ell = n$. The (semi)-stability condition for $\EE $ reduces to 
\[\delta_{\theta^\pm}(\EE') = (\ell'-n')  - \sum_{t_i^{*,+}\in C'\cap \{z=0\}}  \theta_i^+ \quad +\sum_{t_i^{*,-}\in C'\cap \{z=\infty\}}  \theta_i^- \quad < (\leq)\quad 0.\]
Now since $\hat\theta_i^+= \theta_i^+ +1, \hat \theta_i^-= \theta_i^- $, this condition becomes 
\[\delta_{\theta^\pm}(\EE') =  \ell' - \sum_{t_i^{*,+}\in C'\cap \{z=0\}}  \hat\theta_i^+ \quad +\sum_{t_i^{*,-}\in C'\cap \{z=\infty\}}  \hat\theta_i^- \quad < (\leq) \quad 0.\]
 
Since $\delta^{T^*}_{\widehat\theta^\pm}(\FFl ,\rho)= 0$ (see the data in the table of page \pageref{tableau}), we can compare this criterion criterion to the one for $(\FFl,\rho)$:
\[\delta^{T^*}_{\widehat\theta^\pm}(\FFl', \rho') = \ell' - \sum_{t_i^{*,+}\in C'\cap \{z=0\}}  \widehat\theta_i^+ \quad +\sum_{t_i^{*,-}\in C'\cap \{z=\infty\}}  \widehat\theta_i^- \quad < (\leq) \quad 0.\]

\begin{proposition}
The bundle $\EE $ associated to an instanton is $\theta^\pm$-stable iff the associated pair $(\FFl , \rho)$ or the pair $(C,\KKl)$ is $\widehat\theta^\pm$-stable.
\end{proposition}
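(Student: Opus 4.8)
The plan is to show that, under the normalizations in force here ($\ell=n$ and $\sum_i\theta_i^+=\sum_i\theta_i^-$), all three stability tests are literally the same inequality, because they compare degree functions that agree on a common family of subobjects and because both ambient objects have slope zero. First I would record that the admissible destabilizing subobjects of the three data types are in bijective correspondence, each indexed by a union of components (a sub-spectral-curve) $C'\subseteq C$: for $(\FFl,\rho)$ these are the $\rho$-invariant subsheaves $\FFl'$, for $(C,\KKl)$ the subobjects $(C',\KKl')$, and for $\EE$ the subbundles $\EE'$ that are topologically trivial on each slice $\{a\}\times T$ and split off a subset of the degree-zero line bundles over $z=0,\infty$. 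As explained in the preceding subsections, the spectral and Fourier--Mukai correspondences carry each of these descriptions into the next; when $C$ is irreducible there are no proper such subobjects and stability is automatic, while when $C$ is reduced the invariant subsheaves are exactly the components, and $C$ is reduced where this matters, namely near $z=0,\infty$.

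Next I would verify that the two degree functions coincide on corresponding subobjects. If $\EE'$ and $\FFl'$ both correspond to $C'$, where $C'$ meets $z=0$ in $n'$ points and $\ch(\FFl')=k'+\ell'[\omega_{T^*}]$, then since $\widehat\theta_i^+=\theta_i^++1$ and $\widehat\theta_i^-=\theta_i^-$,
\begin{align*}
\delta_{\theta^\pm}(\EE') &= (\ell'-n') - \sum_{t_i^{*,+}\in C'\cap\{z=0\}}\theta_i^+ + \sum_{t_i^{*,-}\in C'\cap\{z=\infty\}}\theta_i^-\\
&= \ell' - \sum_{t_i^{*,+}\in C'\cap\{z=0\}}\widehat\theta_i^+ + \sum_{t_i^{*,-}\in C'\cap\{z=\infty\}}\widehat\theta_i^- = \delta^{T^*}_{\widehat\theta^\pm}(\FFl',\rho'),
\end{align*}
the middle equality being the bookkeeping that replacing $\theta_i^+$ by $\theta_i^++1$ across the $n'$ points of $C'\cap\{z=0\}$ produces exactly the $+n'$ that cancels the $-n'$. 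Through the translation already noted between $(\FFl,\rho)$ and $(C,\KKl)$, the same quantity equals $\delta^{T^*}_{\widehat\theta^\pm}(C',\KKl')$.

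Finally I would invoke the vanishing of both ambient slopes, $\sll_{\theta^\pm}(\EE)=0$ and $\sll_{\widehat\theta^\pm}(\FFl,\rho)=0$, established earlier. Each stability condition then becomes the demand that the relevant degree of \emph{every} admissible subobject be negative (and semistability that it be nonpositive), so the rank mismatch ($n'$ versus $k'$) flagged just above is harmless: one is comparing degrees to zero, not forming slopes. Since corresponding subobjects have equal degrees, $\EE$ is $\theta^\pm$-stable if and only if $(\FFl,\rho)$ is $\widehat\theta^\pm$-stable, and likewise for $(C,\KKl)$; reading the argument with $\leq$ in place of $<$ gives the matching of semistable (polystable) objects.

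The step I expect to be the main obstacle is the first, namely confirming that the \emph{sets} of admissible destabilizing subobjects are genuinely in bijection, with no spurious subobject appearing on one side only. One must check that every slice-trivial subbundle $\EE'$ really arises from a sub-spectral-curve (using disjointness of the $t_i^{*,\pm}$ and genericity of the fiberwise splitting) and, dually, that the $\rho$-invariant subsheaves exhaust the sub-spectral-curves; away from $z=0,\infty$, where components of $C$ may be non-reduced, one must also ensure these do not contribute extra subobjects that spoil the comparison. Once the subobject correspondence is secured, the degree identity and the slope comparison are routine.
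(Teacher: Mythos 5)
Your proposal is correct and follows essentially the same route as the paper: subobjects on all three sides are indexed by subcurves $C'$ of the spectral curve, the identity $\widehat\theta_i^+=\theta_i^++1$, $\widehat\theta_i^-=\theta_i^-$ makes the $+n'$ from the shifted weights cancel the $-n'$ in $\delta_{\theta^\pm}(\EE')$, and the rank mismatch ($n'$ versus $k'$) is neutralized exactly as in the paper by the vanishing of both ambient slopes, so that stability is a sign condition on degrees rather than a slope comparison. The subtlety you flag at the end — that slice-trivial subbundles of $\EE$ and $\rho$-invariant subsheaves of $\FFl$ both exhaust the components of $C$ — is likewise asserted at the same level of detail in the paper, so your argument matches its proof in both structure and substance.
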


This concludes our set of equivalences of the holomorphic data;   Theorem \ref{holomorphic-equivalences} is proven.

\section{The top horizontal arrow: the Nahm transform}\label{sec:NahmTransform}

We are concerned in this section with the top row of our diagram \ref{basic-diagram}. We  examine this row in the instanton to monopole direction, with some comments at the end on the inverse direction.
More specifically, the Nahm transform  gives  us an equivalence between 
\begin{itemize}
\item $(E,\nabla)$, a rank $n$ bundle with a finite energy charge $k$ anti-self-dual $\U(n)$ connection on the flat cylinder  $\RT$. At $+\infty$,   $(E,\nabla)$ is asymptotic to a fixed flat $\U(1)^n$ connection on $S^1 \times T $ which, on each $ \{\mu\}\times T$, corresponds to the sum of line bundles $\oplus_i L_{t^{*,+}_i}$, and on the $S^1$ factors, acts on the $L_{t^{*,+}_i}$ by $\frac{\partial}{\partial \mu} +  i\theta^+_i$. One has, at $-\infty$, the same, but with $(t^{*,-}_i, \theta^-_i)$.   The set  $\{t^{*,+}_1,\ldots,t^{*,+}_n, t^{*,-}_1,\ldots,t^{*,-}_n\}$ contains $2n$ distinct points.

\item $(F,\nabla,\phi)$,  a $\U(k)$ monopole, defined on $ S^1\times T^*$, with Dirac type singularities of weight    $(1,0,\ldots,0)$ at $n$ points $(  \theta^+_i, t^{*,+}_i)\in  S^1\times T^*$, and of weight    $(-1,0,\ldots,0)$ at $n$ points $(\theta^-_i, t^{*,-}_i)\in  S^1\times T^*$. 

\end{itemize}

We show in this section that the objects involved in the transform can also be given in a round about way through our basic diagram \eqref{basic-diagram}, with the same results, by going through the holomorphic data. This alternative allows us, for example, to obtain in a fairly straightforward way the bijective nature of the transform, which is more difficult to obtain   via the direct analytical route.
 
 \subsection{From $E$ to $F$; definitions\ok}

This direction has been examined in detail in the paper \cite{benoitpaper}, for the case of rank two (see in particular \cite[Sec.~3]{benoitpaper} for a concise description of Nahm's heuristic). We recall how the the transform works. We  consider the kernel of a Dirac operator $\DD^*_{\theta,t^*}$ in the instanton background, shifted by the central characters  $(-i\theta, -it^*), \theta \in S^1, t^*\in T^*$.  We realise the spin bundles as the product of the  quaternions with $\bbr\times S^1\times T$, with $\sigma_1,\sigma_2,\sigma_3$ acting by $i,j,k$. With these conventions, and writing $t^*$ in orthonormal coordinates as a pair of real numbers $(t_1^*, t_2^*)$, one is considering the Dirac operators
\begin{equation}
	\DD^*_{\theta,t^*} := \DD^*_\nabla-i\sigma_3\theta  -i\sigma_1t_1^* -i\sigma_2t_2^*\colon \Gamma(\RT;S^-\otimes E)\to\Gamma(\RT;S^+\otimes E)\label{eqn:def-Dirac}
\end{equation}
acting on sections  the tensor product of $E$ with the spin bundle $S^-$, and more importantly the $L^2$-index bundle parametrized by $S^1\times T^*$.  

The Weitzenbock formula
\[\DD^*_\nabla \DD_\nabla=\nabla^*\nabla+F_\nabla^+\]
guarantees that $L^2\text{-}\ker(\DD_\nabla)=\{0\}$ when $\nabla$ is ASD, so the index bundle is in fact a bundle of kernels.  That we indeed have a bundle depends crucially on the invariance of the index for a continuous family of Fredholm operators.  For a fixed  $t^*\in T^*$, as long as the limit bundles over the ends $\{\pm\infty\}\times S^1\times T$ have no trivial summand, the operator is indeed Fredholm; see \cite{APS1}. The trivial summands occur when 
\[(\theta,t^*)\in W:= \{(\theta_1^\pm,t_1^{*,\pm}),\ldots,(\theta_n^\pm,t_n^{*,\pm})\}.\]
Elsewhere, that is on $(S^1\times T^*)\setminus W$, one has a  bundle $F$ whose fiber at $(\theta,t^*)$ is
\[F_{(\theta,t^*)}:=L^2\text{-}\ker(\DD^*_{\theta,t^*}).\]

One then defines a connection $\nabla$ and a Higgs field $\phi$ on $E$ by $L^2$ projection $P\colon \underline{L^2(\RT,E\times S^-)}\rightarrow F$ of the corresponding trivial operators $d ,  \times s, s\in \R$ on the trivial bundle with fiber  $L^2(\RT,E\times S^-)$  over $  S^1\times T^*$, that is
\[\nabla = P\circ d,\quad\phi =  i P\circ (\times s).\]

\subsection{From $E$ to $F$: Indices}
One would like to know the rank of $F$, as well as  its degree. We  restrict our attention for the moment to the case $\theta $ constant, and  simply take $\theta = 0$, that is we consider the family of Dirac operators
\[\DD_\nabla^* -i \sigma_1 t^*_1 -i \sigma_2t^*_2\]
parametrized by $t^*=(t^*_1,t^*_2)\in T^*$. This bundle is obtained by taking the standard fiberwise flat unitary connection over the Poincar\'e line bundle $P$ over $ T\times T^*$, lifting it as $\pi_{34}^*P$ to $\R\times S^1\times T\times T ^*$, then taking the tensor product $\pi_{34}^*P\otimes \pi_{123}^*E$, equipping it with the fiberwise tensor product connection, and taking the index bundle of that over $T^*$.

We begin with  a few homotopies and a gauge transformation.  These operations preserve the Fredholmness of the Dirac operators.
\begin{itemize}
\item Modify the connection so that the exponentially decreasing term in the asymptotic form above is in fact zero.  This modification allows us to assume that the support of the curvature of $\nabla$ is compact and contained in $(-s_0, s_0)\times S^1\times T$.
\item We can move the $t^{*,\pm}_i$ to a constant, say $0$.
\item We can move the $\theta^{\pm}_j$ to constants $ m^{\pm}_j+\frac12$, for integers $m^{\pm}_j$, in a way that has $\theta^{\pm}_j$ differing from $m^{\pm }_j+\frac12$ by less than $\frac12$ (so that $m^{\pm}_j$ is the integer part $\floor{\theta^{\pm }_j}$ of $\theta^{\pm }_j$). 
\item Applying  gauge transformations  of the form $\exp (i \mu \diag (m^{-}_j))$, we can set all the $m^{-}_j$ to zero, shifting the ones at $+ \infty$ to $\hat m^{+}_j  = m^{+}_j-m^{-}_j$. Let  $M= \sum_j \hat m^{+}_j=\sum_{j} \floor{\theta_j^+}-\floor{\theta_j^-}=-\widehat\Theta$.
\end{itemize}

We apply the excision principle for indices (see for instance \cite[Appendix B]{benoitthesis}),
 for the following spaces, equipped with bundles and connections. We begin with  line bundles. First pick $s_1>s_0$. We let
\begin{itemize}
\item $L_+$ be the  line bundle with  connection  $f(s) d\mu $ on  $(-s_1, s_1)\times S^1\times T$, flat with  connection $ \frac i2 d\mu $  on $(-s_1, -s_0)$, and flat with connection $\frac{i(2m+1)}2 d\mu $ on $(s_0, s_1)$,
\item $L_- $ be the  flat line bundle with  connection $\frac i2 d\mu $  on $ (-s_1, s_1)\times S^1\times T$,
\item $L^+$ be the  flat line bundle with  connection $\frac i2 d\mu $  on $ \Bigl((-\infty, -s_0)\cup (s_0,\infty)\Bigr)\times S^1 \times T$, and
\item $L^-$ be the  flat line bundle with  connection $\frac i2 d\mu $  on $ (-s_1, s_1)\times S^1\times T$.
\end{itemize}
Now we do some glueings. 
\begin{itemize}
\item Glue $L_+$ to $L^+$, to obtain $L_+^+$, a  line bundle  with connection on  $\RT$, flat with  connection $ \frac i2 d\mu $ on $(-\infty, -s_0)$, and flat with connection $\frac{i(2m+1)}2d\mu$ on $(r_0, \infty)$. On the negative interval, the glueing is trivial; on the positive interval, one needs to glue via a bundle automorphism.
\item Glue $L_-$ to $L^+$, to obtain $L_-^+$, a line bundle  with flat connection $ \frac i2 d\mu $ on  $\RT$.
\item Glue $(-s_1, s_1)$ to $(-s_1, s_1)$ so as to obtain a circle, identifying $(-s_1,-s_0)$ to $(s_0, s_1)$ and $(s_0,s_1)$ to $(-s_1, -s_0)$  at the ends of the interval; on the bundle level, glue $L_+$ to $L^-$, to obtain a line bundle $L_+^-$ over $ S^1\times S^1\times T$ with  connection $\frac i2d\mu $ on $  \Bigl(S^1\setminus  (-s_0,s_0)\Bigr)\times S^1\times T$. This glueing  requires a bundle automorphism at one of the two ends of the circle, and introduce a first Chern class $-m[\omega_{S^1\times S^1}]$ for the resulting bundle on $ S^1\times S^1\times T$.\label{L_+^-}
\item In the same way, glue $L_-$ to $L^-$ to obtain a   flat line bundle $L_-^-$ with  connection $\frac i2d\mu $ on $ S^1\times S^1\times T $.
\end{itemize}

With these constructions, twisting now by our family of connections in $T^*$,
one has for  the index bundles over $T^*$
\[\Ind(L_+^+)\oplus \Ind(L_-^-)\ominus \Ind(L_-^+)\ominus \Ind(L_+^-)=0.\]
Now consider the Chern characters of these bundles. 
\begin{itemize}
\item For $\Ind(L_-^+)$, one sees explicitly that there is no kernel or cokernel for any of the twists; the index bundle is trivial.
\item \label{index theorem families}  For $\Ind(L_-^-), \Ind(L_+^-)$, one can apply the index  theorem for families for bundles over a compact manifold; we note that the first Chern class of $L_+^-$ is $-m[\omega_{S^1\times S^1}]$; for $L_-^-$ it is zero. Using $\ch(P) =1+\eta_\Delta -[\omega_T]-[\omega_T^*]- [\omega_T\wedge\omega_{T^*}]$, one has that $\ch(\Ind(L_-^-))= 0, \ch(\Ind(L_-^+)) =  m[\omega_{T^*}]$.  
\end{itemize}
Combining these facts, we obtain the result we want, namely that 
\begin{equation} \ch(\Ind(L_+^+)) = m [\omega_{T^*}].\label{Abelian}\end{equation}
We now apply our excision again. Take 
\begin{itemize}
\item $E_+$=  vector  bundle with  connection $A$ on  $(-s_1, s_1)\times  S^1\times T$, flat with  connection $\frac i2\id d\mu $ on $(-s_1, -s_0)$, and flat with connection $i \diag(\frac{2\hat m_j+1}2 )d\mu $ on $(s_0, s_1)$;
\item $E_- $=  vector  bundle with  Abelian connection $\diag(f_i(s))d\mu$ on  $ (-s_1, s_1)\times S^1\times T$, flat with  connection $\frac i2\id d\mu $ on $(-s_1, -s_0)$, and flat with connection $i \diag(\frac{2\hat m_j+1}2 )d\mu $ on $(s_0, s_1)$;
\item $E^+$ =  flat line bundle with  connection $\frac i2\id d\mu $ on $ \Bigl((-\infty, -s_0)\cup (s_0,\infty)\Bigr)\times S^1\times T$;
\item $E^-$ =  flat line bundle with  connection $\frac i2\id d\mu $ on $ (-s_1, s_1)\times  S^1\times T$;
\end{itemize}

As above, glue these to obtain bundles plus connection $ E_+^+, E_-^+$ on $  \RT$, and $E_+^-, E_-^-$ on $  S^1\times S^1\times T$; one has $c_1(E_+^-) = -M[\omega_{S^1\times S^1}], c_2(E_+^-) = k[\omega_{S^1\times S^1}\wedge \omega_T]$, while $c_1(E_-^-) =-M[\omega_{S^1\times S^1}], c_2(E_-^-) = 0$, Our index calculation over the compact manifolds for the Dirac operator  for the family of twists parametrised by $T^*$ is $\ch(\Ind(E_+^-)) = -k   +M [\omega_{T^*}], \ch(\Ind(E_-^-)) =  M [\omega_{T^*}]$;  the previous calculation done in the Abelian case above applied to  $\ch(\Ind(E_-^+))$ gives $M [\omega_{T^*}]$. Combining this result with the excision principle, we obtain
\[\ch(\Ind(E_+^+)) = -k  + M [\omega_{T^*}].\]

We thus have the index bundle $ F= -\Ind(E_+^+)$ for our fixed values of $\theta^{\pm }_j$; its first Chern class is the sum 
$\widehat\Theta=\sum_i -\floor{\theta^{+ }_i]+[\theta^{- }_i}$. For different reasons, as we have seen, the sum  $\sum_i -\theta^{+ }_i+\theta^{- }_i$ is also an integer, in fact zero; it does not, however, have to be the same integer.

We also note that the full Nahm transform involves a shift of the $\theta^{\pm }_j$ to $\theta^{\pm }_j + \theta$, so that one is considering the index of
$\DD_{\theta,t^*}$; let ${  F}_\theta$ be the index bundle over $T^*$, as $\theta$ varies. When one of the $\theta^{\pm }_j + \theta$ is an integer, there is a point on the dual torus $T^*$ at which the operator is non-Fredholm, and so the index bundle is not defined at this point; passing through this value of $\theta$, the first Chern class jumps, negatively if it is a $\theta^{+ }_j - \theta$ that is passing through an integer, and positively if it is a  $\theta^{- }_j + \theta$: the degree is the sum  $\sum_i -\floor{\theta^{+ }_i +\theta}+\floor{\theta^{- }_i+ \theta}$. As $\theta$ varies from $0$ to $1$, all the $\theta^{\pm }_j + \theta$ pass in turn through an integer value, and the degree of ${ F}_\theta$ comes back to the initial value.

The family of bundles ${  F}_\theta$ has a natural notion of degree $\delta({  F})$, which is the average degree of ${ F}_\theta$ as one moves $\theta$ from $0$ to $1$; this notion is used in \cite[Definition 3.7]{benoitjacques3} to define an appropriate notion of stability for the pair $({ F},\psi)$. From the easily seen fact
\[\int_0^1 \floor{\alpha +\theta} d\theta = \alpha,\]
one has that, while the degree of $F$ along $\theta$ = constant is $\sum_i -\floor{\theta^{+ }_i +\theta }+\floor{\theta^{-}_i+\theta }$, the average degree over the three-fold is
\[\delta({F}) = \sum_i (-\theta^{+ }_i+\theta^{-}_i) = 0.\]

We have thus defined on the complement of our singular points  a bundle $F$ equipped with a connection and Higgs field, and computed its degree and rank; we know that it satisfies the Bogomolny equation.  The results of \cite{benoitpaper} tell us that the Higgs field has asymptotic behaviour $\frac{i}{2R}\diag(\pm1,0,\ldots,0)+O(1)$ at $R\to 0$ but doesn't prove that $\nabla(R\phi)=O(1)$. The next section, in addition to proving the commutativity of \eqref{basic-diagram}, tells us that we have actual Dirac type singularities.


\subsection{From $E$ to $F$: Nahm transform versus the Fourier--Mukai transform}\label{sec:NTvsFM}
We have a diagram of operations \eqref{basic-diagram} with the top row a Nahm transform taking us between monopole and instanton, and the lower row two holomorphic transforms. The vertical correspondences are Hitchin--Kobayashi correspondences. One would like to see that this diagram commutes. We establish this fact  by seeing that starting with an instanton $(E,\nabla)$ on $\RT$,  the pairs $(\FFu,\rho)$,$(\FFl ,\rho)$ corresponding to it via the upper route and the lower route are the same, at least up to some twists.

Let us start with the upper route.  The kernel $\FFu_{\theta,t^*} $ of the Dirac operator $\DD^*_{\theta,t^*}$ of Equation \eqref{eqn:def-Dirac} can be thought of as the space of harmonic sections ${\ca H}^1$ of a Dolbeault  complex
\begin{equation}\label{Dirac}
\begin{diagram}
 L^2(E)&\rTo^{\begin{pmatrix} \nabla_\theta^{0,1}\\ \nabla_{t^*}^{0,1}\end{pmatrix}} & L^2(E \otimes \Lambda^{0,1}( \RT))
  &\rTo^{\begin{pmatrix} \nabla_{t^*}^{0,1}, -\nabla_\theta^{0,1} \end{pmatrix}}&  L^2(E\otimes \Lambda^{0,2}( \RT)).\end{diagram}\end{equation}
Here $\nabla_{t^*}^{0,1}$ is the $\bar\partial$-operator given by the connection in the $T$-direction, shifted by the character $-it^*$, and $\nabla_{\theta}^{0,1}$ is the $\bar\partial$-operator given by the connection in the $\R\times S^1\subset \PP^1$-direction, shifted by the character $-i\theta$. The harmonic sections of course get identified with the first cohomology  of the complex, which is a holomorphic object. The bundle $\FFu$ is given by the restriction to $\theta = 0$, and so  ${\FFu}_{t^*} = H^1_b( \RT, E\otimes L_{-t^*})$, where the subscript $b$ refers to those cohomology classes satisfying the appropriate boundary conditions, and $ L_{-t^*}$ is the flat line bundle on $T$ associated to the character $-it^*$. 
In other words, one lifts the bundle with connection to $\R\times S^1\times T \times T^*$, tensors by the dual of the Poincar\'e bundle $\PPP$ equipped with its natural flat connection, and considers the projection $\phi= \psi\circ \pi_2$ to $T^*$; one is taking the direct image, $R^1\phi_*(\pi_1^* E\otimes \PPP^*)$.
  
For the lower route, one first extends to a holomorphic bundle $\EE$ on $\PP^1\times T$, then $\FFl=\psi_*R^1(\pi_2)_*(\pi^*_1\EE\otimes \PPP^*)$.
Thus $\FFl _{t^*}$ is given by the first cohomology of the complex
\begin{equation}\begin{diagram}
\Omega^{0,0}( \PP^1  \times T, \EE \otimes L_{-t^*})&\rTo^{\db} &\Omega^{0,1}( \PP^1  \times T, \EE \otimes L_{-t^*}) &\rTo^{\db}&\Omega^{0,2}( \PP^1  \times T, \EE \otimes L_{-t^*})\end{diagram}\label{Dolbeault}.\end{equation}
Almost the same complex, with different boundary behaviours at $z=0,\infty$.

\emph{Identifying the bundles}.  There is a natural way of identifying the two cohomology bundles $F$ and $\FFl$, at least over the points $t^*$ where the map $\rho$ is holomorphic and invertible; the reason, as we shall see, is that for a given $t^*$ the cohomology can localise onto a compact set on the cylinder, so that the $L^2$ boundary condition and the compactification boundary conditions coincide. For $\FFl$, the Fourier--Mukai transform splits into two steps, so that one has $\KKl  = R^1(\pi_2)_*(\pi_1^*\EE  \otimes \PPP^*)$, and $\FFl  = \psi_*(\KKl )$. One can do the same for  $E$, setting $\KKu = R^1(\pi_2)_*(\pi_1^*E\otimes \PPP^*)$; away from   $z= 0,\infty$, the sheaf $\KKu$ is supported on the same curve $C$ as $\KKl $; indeed, away from $z=0,\infty$, $\KKu$ and $\KKl $ are naturally identified, and so $\FFu= \psi_* \KKu$ and $\FFl = \psi_*\KKl $, away from the points of intersection of $C$ with $z=0,\infty$.

For later use, we would like to build cocycles representing local sections of $\KKu$ and $\KKl $  fairly explicitly, giving  Dolbeault representatives that work for both cohomologies, on a neighbourhood of a smooth point of the curve $C$.  First, we build a cocycle on $T^* \times T$ for the first direct image of the Poincar\'e bundle, supported  over an open set $D \times T$, where $D$ is a small disk centred on the origin  in $T^*$. Note that since $H^1(T,L)=0$ for any non-trivial $L$ of degree 0, this cocycle can be a bump form supported near $0\in T^*$. Write our elliptic curve $T$ as a quotient $\C^*/(t\rightarrow \nu t)$.  Let $t^*$ belong to our small disk $D$ containing the origin  in $T^*$. We can define line bundles $L_{t^*}$ by saying that sections of $L_{t^*}$ are functions over $\C^*$ satisfying $f(\nu t) = (\nu\overline \nu)^{t^*} f(t)$; the ${t^*}$ can be complex, and ${t^*} = 0$ corresponds to the trivial bundle. We  build a cocycle over $D\times T$. Consider the function $r^{2{t^*}} = (t\overline t)^{t^*}$ on $\C^*$. This function is a section of $L_{t^*}$; now note that $\frac{\partial }{\partial \overline t}(r^{2{t^*}})d\overline t = {t^*}(t\overline t)^{({t^*}-1)} (td\overline t)$. We consider the $L_{t^*}$-valued $(0,1)$-form
\[\mu (t) =  (t\overline t)^{({t^*}-1)} (td\overline t) = \frac{1}{{t^*}}\frac{\partial (r^{2{t^*}})}{ \partial \overline t}d\overline t. \]
Note that for ${t^*}$ away from  the origin, $\mu$ is a coboundary in $t$. Now take a bump function $\phi({t^*})$ supported in $D$, with $\phi= 1$ on an open set containing the origin. Consider on $D\times T$ the compactly supported form
\[ M({t^*}, t) = \phi({t^*}) \mu +\frac{\partial }{\partial {\overline {t^*}}}(\phi({t^*}))\frac{(r^{2{t^*}})}{{t^*}}d\overline {t^*}. \]
This $M$ is
our generator for the first direct image of the Poincar\'e line bundle $\PPP^*$, projecting from $D\times T$ to $T^*$.  

We now use this to write out an explicit Dolbeault cocycle for the first direct image of 
$\EE\otimes \PPP^*$, away from $z=0,\infty$. This direct image $\KKl$ is supported on the curve $C$. For a family of points $(z(\iota), t^*(\iota)) $ on the smooth locus of $C$ , one has that  there   exist local sections $\sigma(\iota)$ of $\EE\otimes \PPP^*$: the space $H^0(\{z\}\times T, \EE\otimes L^*_{-t^*})$ is non-zero if and only if  $H^1(\{z\}\times T, \EE\otimes L^*_{-t^*})$ is. 
 One then has that 
 \begin{equation}\label{cocycle}
\sigma(\iota) \cdot M({t^*(\iota)},t) 	
 \end{equation}
 represents sections both of $\KKu$ and $\KKl $.    Thus, projecting out the $\PP^1$ factor, away from the $t^{*,\pm}_i\in T^*$ over which $C$ intersects $z=0,\infty$, one then has a natural identification of $\FFu= \psi_* \KKu$ and $\FFl = \psi_*\KKl $, away from the points of intersection of $C$ with $z=0,\infty$. 

\emph{Identifying the endomorphisms}. We next want to see that the endomorphisms $\rho$ are identified also. From the Nahm transform side, the endomorphism is given at $t^*$ by solving $(\nabla_\theta - i\phi)(S(\theta,t^*))= 0$ around the circle $\theta\in [0,1]$, and defining $\rho(S(0,t^*)) = S(1,t^*)$. On the cohomology of the complex \eqref{Dirac}, the operator $\nabla_\theta - i\phi$ is  represented on cocycles by $\partial_\theta +s$.
If one has $\Xi(s,\mu, t) $ representing a cocycle for a fixed $t^*$ and for $\theta=0$, then $S(\theta,t^*)=\exp(-\theta s)\Xi(s,\mu, t) $ solves  $(\nabla_\theta -i\phi)(S(\theta,t^*))= 0$. At $\theta = 1$, one then has the cocycle  $\exp(-s)\Xi(s,\mu, t) $, representing the cocycle; one notes, however, that the identification of the cohomology at $\theta= 0$ and $\theta = 1$, so that one is integrating over a circle, and not over an interval, involves an additional shift by a factor $\exp (-i \mu)$, so that the full monodromy of the cocycle is by  $\Xi(s,\mu, t) \mapsto \exp(-s-i\mu)\Xi(s,\mu, t)=z\Xi(s,\mu, t) $. Hence the monodromy is given by $[\Xi(s,\mu, t)] \mapsto [z\Xi(s,\mu, t)] $, or, in other words, by multiplying by the complex coordinate $z$. Thus on $\KKu\cong \KKl$ over $z\neq 0$ and $z\neq \infty$, one has the same operation on fibers $\psi^{-1}(t^*)$ of multiplication by $z$. Taking direct images, the two versions of $\rho$ are therefore identical.

\emph{Behaviour at infinity.} We thus have $(\FFu,\rho)$ and $(\FFl , \rho) $ identified away from the points $t^*_i$ over which the spectral curve $C$ intersects $z= 0,\infty$ in $\PP^1$. One can ask what happens at these points; the answer, basically, is that one is a Hecke transform of the other; equivalently, on the level of sheaves over $\PP^1\times T^*$, $\KKu$ is obtained from $\KKl $ by twisting by a divisor whose support is the set of points $(0, t_i^{*,+}), (\infty, t_i^{*,-})$, and whose coefficients depend on the values of $\theta_i^\pm$. We recall that the (ordinary) degree of $F$ is the sum $\sum -\floor{\theta_i^+}+\floor{\theta_i^-}$, while the (ordinary) degree of $\FFl $ is $n$. 
 
 The comparison between $\KKu$ and $ \KKl $ revolves around whether a cocycle lying  in $L^2$ also lies in our compactification, and vice versa. Let us look at the situation around $z=0$. Here, the spectral curve is, locally,  a graph of a function $t^*_i(z)$ which has a finite limit $t_i^{*,+}$ as $z$ tends to zero. One then has, in a holomorphic trivialisation (corresponding to $\KKl $), as in \eqref{cocycle}, cocycles $c= \sigma(z,t) \cdot M({t^*},t)$. These are finite at $0$. Let us now consider what happens when one goes to the unitary trivialisation, which is related to the holomorphic one by a transition function $(z\bar z)^{-\theta_i^+/2}$. The cocycle, instead of being of order zero at infinity,  now has order  $-\theta_i^+$; to get a non-zero holomorphic $L^2$ section, one must multiply it by $z^{\floor{\theta_i^+}+1}$; in other words, near the singular point $q_i^+ = (0, t_i^{*,+})$,  $\KKu = \KKl  ((-\floor{\theta_i^+}-1) q_i^+)$. If one considers the relation between $\KKu	$ and $ \KKl $ near $q_i^- = (\infty, t_i^{*,-})$, one has in a similar vein that, locally, $\KKu = \KKl  (\floor{\theta_i^-} )$. Thus, globally \[\KKu = \KKl \bigl( - \sum_i (\floor{\theta_i^+}+1) q_i^+ - \floor{\theta_i^-}q_i^-)\bigr).\]
 Correspondingly, the bundles $\FFu$ and $\FFl$ are related by a Hecke transform; as the transform is on eigenspaces of $\rho$, the endomorphism $\rho$ remains well defined on both bundles.

{\it Stability.} We recall the relevant definitions of stability for  $(\FFu,\rho)$ and $(\FFl , \rho)$. In both cases, we have a degree for invariant subsheaves. For $\FFu$, the appropriate notion of degree is  defined using
\[\widetilde\theta_i^+ = \theta_i^+- \floor{\theta_i^+},\quad \widetilde\theta_i^- = \theta_i^- - \floor{\theta_i^-},\]
and for $\FFl$, by 
\[\widehat\theta_i^+ = \theta_i^+ + 1,\quad  \widehat\theta_i^- = \theta_i^-  .\]
 For subsheaves, one sets
by 
\[\delta^{T^*}_{\widetilde\theta}(\FFu', \rho') = \ell' - \sum_{q^+_i\in C'\cap \{z=0\}}  \widetilde\theta_i^+ \quad +\sum_{q^-_i\in C'\cap \{z=\infty\}}  \widetilde\theta_i^-\]
for subsheaves $(\FFu', \rho')$ of $(\FFu,\rho)$, and 
\[\delta^{T^*}_{\widehat\theta}(\FFl', \rho') = \ell' - \sum_{q^+_i\in C'\cap \{z=0\}}  \widehat\theta_i^+ \quad +\sum_{q^-_i\in C'\cap \{z=\infty\}}  \widehat\theta_i^-.\]
for subsheaves $(\FFl ', \rho')$ of $(\FFl , \rho)$. One has $\delta^{T^*}_{\widetilde\theta}(\FFu, \rho) = \delta^{T^*}_{\widehat\theta}(\FFl , \rho) = 0$, and if $(\FFu', \rho')$ corresponds to $(\FFl ', \rho')$ via the Hecke transform which relates $\FFu$ and $\FFl $, then $\delta^{T^*}_{\widetilde\theta}(\FFu', \rho') = \delta^{T^*}_{\widehat\theta}(\FFl ', \rho')$. Therefore

\begin{proposition} 
Thus $(\FFu,\rho)$ is $\widetilde\theta$-(semi)-stable iff $(\FFl , \rho)$ is $\widehat\theta$-(semi)-stable.
\end{proposition}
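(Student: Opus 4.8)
The plan is to treat this as a bookkeeping corollary of the Hecke comparison established just above: once one knows that the Hecke transform relating $\FFu$ and $\FFl$ induces a rank-preserving bijection on $\rho$-invariant subsheaves under which the two notions of $\theta$-degree agree, the equivalence of the stability conditions is immediate. First I would make the bijection precise. Since $\KKu$ and $\KKl$ are supported on the same spectral curve $C$ and differ only by a twist by a divisor supported at the finitely many points $q_i^+=(0,t_i^{*,+})$ and $q_i^-=(\infty,t_i^{*,-})$, the $\rho$-invariant subsheaves of $\FFu$ and of $\FFl$ are both indexed by the components $C'$ of $C$, in the manner recalled in the stability discussion of Section~\ref{sec:FourierMukai}. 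A component $C'$ thus determines simultaneously an invariant subsheaf $\FFu'\subset\FFu$ and $\FFl'\subset\FFl$; because the generic fibre of $C'\to T^*$ is unaffected by a modification over finitely many points, one has $\rk(\FFu')=\rk(\FFl')$.

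Next I would invoke the degree identity already obtained, namely $\delta^{T^*}_{\widetilde\theta}(\FFu',\rho')=\delta^{T^*}_{\widehat\theta}(\FFl',\rho')$ for every such corresponding pair. This rests on the relations $\widehat\theta_i^+=\widetilde\theta_i^++\floor{\theta_i^+}+1$ and $\widehat\theta_i^-=\widetilde\theta_i^-+\floor{\theta_i^-}$ together with the degree shift produced by the Hecke twist $\KKu=\KKl(-\sum_i(\floor{\theta_i^+}+1)q_i^+-\floor{\theta_i^-}q_i^-)$: the change in the integer degree $\ell'$ in passing between $\FFl'$ and $\FFu'$ compensates the difference between the $\widehat\theta_i^\pm$ and $\widetilde\theta_i^\pm$ contributions to the two degree formulas. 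Specializing to $C'=C$ gives in particular $\delta^{T^*}_{\widetilde\theta}(\FFu,\rho)=\delta^{T^*}_{\widehat\theta}(\FFl,\rho)=0$.

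Finally, dividing the equal degrees by the equal ranks yields $\sll_{\widetilde\theta}(\FFu',\rho')=\sll_{\widehat\theta}(\FFl',\rho')$ for every corresponding pair of invariant subsheaves, and $\sll_{\widetilde\theta}(\FFu,\rho)=\sll_{\widehat\theta}(\FFl,\rho)=0$. Consequently the defining inequality $\sll(\FFu',\rho')<(\leq)\,\sll(\FFu,\rho)$ holds for all invariant $\FFu'$ precisely when the corresponding inequality holds for all invariant $\FFl'$, which is the assertion, uniformly for the stable and the semistable cases. The only place I expect genuine care to be needed is in checking that the correspondence $C'\mapsto(\FFu',\FFl')$ is exhaustive on both sides --- that the pointwise Hecke modification neither destroys nor creates invariant subsheaves --- but this is guaranteed because $\rho$-invariance is controlled by the component decomposition of $C$, which the twist at $q_i^\pm$ leaves untouched.
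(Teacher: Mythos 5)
Your argument is correct and is essentially the paper's own: the paper states the degree identity $\delta^{T^*}_{\widetilde\theta}(\FFu',\rho')=\delta^{T^*}_{\widehat\theta}(\FFl',\rho')$ for corresponding invariant subsheaves (together with $\delta^{T^*}_{\widetilde\theta}(\FFu,\rho)=\delta^{T^*}_{\widehat\theta}(\FFl,\rho)=0$) immediately before the proposition and then declares the proof ``a straightforward unwinding of the definitions,'' which is precisely the unwinding you carry out via the component-indexed correspondence of invariant subsheaves. One caveat: the global twist formula you quote, $\KKu=\KKl\bigl(-\sum_i(\floor{\theta_i^+}+1)q_i^+-\floor{\theta_i^-}q_i^-\bigr)$, carries a sign typo inherited from the paper itself --- the paper's local computation gives $\KKu=\KKl\bigl(\floor{\theta_i^-}q_i^-\bigr)$ near $q_i^-$, and only with that sign (i.e.\ $+\floor{\theta_i^-}q_i^-$ globally, which is also the sign forced by $\deg\FFu=\widehat\Theta$ and $\deg\FFl=n$) does your compensation of the jump in $\ell'$ against the shifts $\widehat\theta_i^\pm-\widetilde\theta_i^\pm$ actually close up.
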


The proof is a straightforward unwinding of the definitions.

\emph{The Nahm transform gives us a monopole}. Let us consider what this equivalence gives us. The roundabout complex route takes us from $E$, to $\EE$, to $(C,\KKl)$, to $(C,\KKu)$ and then to $(\FFu,\rho)$.
The latter, from \cite{benoitjacques3} is the holomorphic data of a singular monopole $(\widehat F,\widehat \nabla, \widehat \phi)$ on $S^1\times T^*$. On the other hand, the Nahm transform gives us a solution 
$( F, \nabla,   \phi)$ with the same singularities, and with the same holomorphic data $(\FFu,\rho)$. On the monopoles, this gives us an identification of $\widehat F$ and $F$ which intertwines the ``complex" part of the connections, that is $\partial^{0,1}_{T^*}$ and $ \nabla_\theta - i\phi$, and which is bounded at the singularities. The Bogomolny equations on $S^1\times T^*$ are just reductions of the anti-selfduality equations on   $S^1\times  S^1\times T^*$, and an equivalent statement would be that the lifts of $\widehat F$ and $F$ are holomorphically equivalent. One has however for ASD connections the Weitzenb\"ock identity $\nabla_A^*\nabla_A = (\partial^{0,1}_A)^*\partial^{0,1}_A$, and so the identification is flat, that is it intertwines the connection. This then shows that $(F,\nabla,\phi)$ is one of our desired singular monopoles.

We can now summarize our circle of equivalences.

\begin{theorem} One has the following equivalent data:

\begin{itemize}
\item $(E,\nabla)$ is a rank $n$ bundle with a finite energy charge $k$ anti-self-dual $\U(n)$ connection on the flat cylinder  $S^1\times \R \times T$. At $+\infty$,   $(E,\nabla)$ is asymptotic to a fixed flat $\U(1)^n$ connection on $S^1 \times T $ which, on each $ \{\mu\}\times T$, corresponds to the sum of line bundles $\oplus_i L_{t^{*,+}_i}$, and on the $S^1$ factors, acts on the $L_{t^{*,+}_i}$ by $\frac{\partial}{\partial u} +  i\theta^+_i$. One has, at $-\infty$, the same, but with $(t^{*,-}_i, \theta^-_i)$.
\item $(F,\nabla,\phi)$ is a $\U(k)$ monopole, defined on $ S^1\times T^*$, with Dirac type singularities of weight    $(1,0,\ldots,0)$ at $n$ points $(  \theta^+_i, t^{*,+}_i)\in  S^1\times T^*$, and of weight    $(-1,0,\ldots,0)$ at $n$ points $(\theta^-_i, t^{*,-}_i)\in  S^1\times T^*$. The points $t^{*,+}_i, t^{*,-}_i$ are supposed distinct.
\item $(\FFu,\rho)$ is a pair consisting of a rank $k$ holomorphic vector bundle $\FFu$ over  $T$ of degree $\sum_i -\floor{\theta_i^+} +\floor{\theta_i^-}$, and  a meromorphic automorphism $\rho$ of $\FFu$, with simple zeroes of type $(1,0,\ldots,0)$ at ${t^{*,+}_i}$, simple poles of type $(-1,0,\ldots,0)$ at ${t^{*,-}_i}$; the map $\rho$ is an isomorphism elsewhere. The pair  satisfies the $\widetilde\theta$ polystability condition
\item $(\FFl ,\rho)$ is a pair consisting of a rank $k$ holomorphic vector bundle $\FFl $ over $T$ of degree $n$, and a meromorphic automorphism $\rho$  of $ \FFl$, with simple zeroes of type $(1,0,\ldots,0)$ at ${t^{*,+}_i}$, simple poles of type $(-1,0,\ldots,0)$ at ${t^{*,-}_i}$; the map $\rho$ is an isomorphism elsewhere. The pair  satisfies the $\widehat\theta$ polystability condition.

\item $(C, \KKl )$ is a pair consisting of a holomorphic curve $C$ of degree $(n,k)$ in $ \PP^1 \times T^*$, and a coherent sheaf  $ \KKl $  over  $\PP^1\times T^*$ supported on $C$, with support strictly of codimension one over $\PP^1\times T^*$, and with Chern character $k[\omega_{T^*}] + n [\omega_{\PP}] + (\ell-n)[\omega_{\PP}\wedge \omega_{T^*}]$. The curve $C$ intersects $\{0\} \times T^*$ in the points $(0, t^{*,+}_i)$, and 
$\{\infty\}\times T^*$ in $(\infty, t^{*,-}_i)$; it is smooth, reduced in a neighbourhood of these points, and $\KKl $ is a line bundle over $C$ in these neighbourhoods.
\item $\EE $ is a $\theta$-semi-stable holomorphic rank $n$ vector bundle of degree $0$ over $ \PP^1 \times T$, with $c_2(\EE ) = k$. Over $0$ in $\PP^1$, the bundles $\EE $ is the sum $\oplus_i L_{t^{*,+}_i}$, and over $ \infty $, it is $\oplus_i L_{t^{*,-}_i}$
\end{itemize}
\end{theorem}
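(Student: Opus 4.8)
The plan is to assemble into a single web of equivalences the bijections already established piece by piece, and then to certify that their composition reproduces the analytic Nahm transform. The backbone is in place: Theorem \ref{holomorphic-equivalences} supplies the mutually inverse transforms among the three holomorphic objects $(\FFl,\rho)$, $(C,\KKl)$, and $\EE$ on the bottom row; Theorem \ref{thm1} together with its converse (the existence theorem of the previous subsection) gives the Kobayashi--Hitchin bijection between the instanton $(E,\nabla)$ and the bundle $\EE$; and Theorem \ref{thm:main}, imported from \cite{benoitjacques3}, gives the Kobayashi--Hitchin bijection between the monopole $(F,\nabla,\phi)$ and the pair $(\FFu,\rho)$. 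The Hecke transform constructed in Section \ref{sec:NTvsFM} links $(\FFu,\rho)$ with $(\FFl,\rho)$, and the preceding proposition matches $\widetilde\theta$-stability of the former with $\widehat\theta$-stability of the latter, so that this link is also a bijection on the relevant moduli. Composing all of these would yield a correspondence between $(E,\nabla)$ and $(F,\nabla,\phi)$ routed through the holomorphic data, with the degrees $n$ of $\FFl$ and $\widehat\Theta=\sum_i(\floor{\theta_i^-}-\floor{\theta_i^+})$ of $\FFu$ accounted for by the Hecke shift.

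The essential new point I would verify is that this composite correspondence \emph{is} the analytic Nahm transform of Section \ref{sec:NahmTransform}, i.e.\ that the diagram \eqref{basic-diagram} commutes. For this I would invoke the localisation argument of Section \ref{sec:NTvsFM}: for a fixed $t^*$ away from the singular values the $L^2$-kernel computing $F$ and the Dolbeault cohomology computing $\FFl$ both concentrate on the same compact region of the cylinder, so that the explicit cocycle \eqref{cocycle} built from the Poincar\'e generator $M(t^*,t)$ represents a class in both; hence $\FFu=\psi_*\KKu$ and $\FFl=\psi_*\KKl$ agree away from the fibres where $C$ meets $z=0,\infty$. The endomorphisms are matched because monodromy around the $\theta$-circle acts on cocycles by multiplication by $z=\exp(-s-i\mu)$, which is precisely the operation inducing $\rho$ on the direct image. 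Over $z=0,\infty$ I would record the twist $\KKu=\KKl(-\sum_i(\floor{\theta_i^+}+1)q_i^+-\floor{\theta_i^-}q_i^-)$, i.e.\ the Hecke transform, which preserves $\rho$ on eigenspaces while producing exactly the degree change above.

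The main obstacle, and the only genuinely analytic input beyond bookkeeping, is to promote this \emph{holomorphic} identification to an identification of the actual monopoles. The Nahm transform of Section \ref{sec:NahmTransform} produces, via \cite{benoitpaper} and the index computation, a solution $(F,\nabla,\phi)$ of the Bogomolny equation with the correct rank, degree, and leading behaviour $\frac{i}{2R}\diag(\pm1,0,\ldots,0)+O(1)$, but \cite{benoitpaper} does not by itself establish the full Dirac-type condition $\nabla(R\phi)=O(1)$; on the other side, Theorem \ref{thm:main} produces from $(\FFu,\rho)$ a genuine singular monopole $(\widehat F,\widehat\nabla,\widehat\phi)$. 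Since both carry the same holomorphic data, I would obtain a bundle isomorphism $\widehat F\cong F$ intertwining the complex operators $\partial^{0,1}_{T^*}$ and $\nabla_\theta-i\phi$ and bounded at the singularities; viewing the Bogomolny equation as the dimensional reduction of anti-self-duality on $S^1\times S^1\times T^*$, the Weitzenb\"ock identity $\nabla_A^*\nabla_A=(\partial^{0,1}_A)^*\partial^{0,1}_A$ forces this holomorphic isomorphism to be flat, hence to intertwine the full connections. This simultaneously closes the diagram and certifies that the Nahm output $(F,\nabla,\phi)$ has honest Dirac singularities, completing the circle of equivalences.
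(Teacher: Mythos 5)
Your proposal is correct and follows essentially the same route as the paper: you assemble the same chain of bijections (Kobayashi--Hitchin on both vertical arrows, the spectral and Fourier--Mukai transforms on the bottom row, the Hecke transform matching $(\FFu,\rho)$ with $(\FFl,\rho)$), verify commutativity of \eqref{basic-diagram} via the same cocycle/localisation and monodromy-equals-multiplication-by-$z$ arguments, and close the circle with the identical Weitzenb\"ock argument promoting the holomorphic identification to a flat one, which simultaneously supplies the missing Dirac-type condition $\nabla(R\phi)=O(1)$. Nothing essential is missing or different.
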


The passage from the first item to the second is the Nahm transform (Section \ref{sec:NahmTransform});  the passages from the other items to the next were shown to be bijections, with  $(\FFu,\nabla,\phi)$ to $(\FFu,\rho)$ given by a Kobayashi--Hitchin correspondence (Section \ref{sec:monopolesandpairs}), that from $(\FFu,\rho)$ to $(\FFl ,\rho)$ a Hecke transform (this Section), that from $(\FFl ,\rho)$ to $(C, \KKl )$ a spectral transform (Section \ref{sec:FourierMukai}), that from $(C, \KKl )$ to $\EE $ a Fourier--Mukai transform (Section \ref{sec:FourierMukai}). The passage from the first item to the last is given directly by a Kobayashi--Hitchin correspondence (Section \ref{sec:instantonsandholomorphicbundles}), and by the results of this last section this correspondence is equivalent to the chain of correspondences given by descending through the list.

\subsection{From $F$ to $E$: the inverse Nahm transform\ok}

There is an inverse Nahm transform, taking us from one of our singular monopoles to a an instanton on 
$\bbr\times T^3$, given by again considering the Dirac equation for the monopole. We do not examine it in detail, but  give a few brief comments. Given a singular monopole $(F,\nabla,\phi)$, one can shift the connections for the monopole by central $\uu(1)$ characters: $\nabla \rightarrow \nabla + i\mu d\theta + i\psi_1 dt_1 + i\psi_2 dt_2,\mu,\psi_1,\psi_2\in \bbr$,  and shift the Higgs field by a scalar $\phi\rightarrow \phi+ is,s\in \bbr$; hence one can define a shifted Dirac operator $\DD_{s, \mu,\psi_1,\psi_2}$. There is a periodicity which allows to think of $(\mu,\psi_1,\psi_2)$ as an element of $T^3$, and one defines a bundle over $\bbr\times T^3$ by
\[ E_{s, \mu,\psi_1,\psi_2} =\  L^2\ {\mathrm {kernel\ of}}\  \DD_{s,\mu,\psi_1,\psi_2}.\]

One  defines the connection on the bundle $E$ over  $\bbr\times T^3$ by $L^2$-projection onto the bundle of the operators $\partial/ \partial s, \partial/ \partial \mu,\partial/ \partial \psi_1,\partial/ \partial \psi_2$.

Of course, here it is the singularities of the monopole that force us to be careful; one obtains a Fredholm operator, and one can compute its index by deformations and excision as before, deforming to the $\U(1)$ Dirac monopoles in a neighbourhood of the singularities. 
Again, there is a lot of work in ensuring that the transform gives one a connection with an $L^2$ bound. One still, fortunately, has the holomorphic route; the kernel of the Dirac operator can be realized as the harmonic sections for a Dolbeault complex, and this geometry can be exploited to obtain a bijection, as we did above.

\section{Moduli} 
\label{sec:non_moduli}
\subsection{Parameter counts\ok}
We have not shown that the correspondences are homeomorphisms (they are, and we leave the proofs for elsewhere). The maps, starting from an instanton,  are fairly easily seen however to be continuous when mapping to the data on the lower line of (\ref{basic-diagram}). With this correspondence in hand, we can for example, count parameters, and obtain descriptions of the moduli. 

For example,  let us count the dimensions for the space of pairs $(C,\KKl )$ corresponding to instantons with fixed boundary conditions. For this count, one wants to count the dimension of the space of curves embedded in $\PP^1\times T^*$, with fixed intersection with $\{0\}\times T^*$ and with $\{\infty\}\times T^*$, and add to it the genus of these curves; indeed, one has a description of a generic set of the moduli as a Jacobian fibration over this family of curves. Our curves are in the linear system $kT^* + n\PP^1$; their self intersection number is $2kn$. 
The adjunction formula tells us that $N_C =  K^*_{\PP^1\times T^*}\otimes K_C$; the sections of the normal bundle corresponding to our deformations are, however, constrained to vanish at $\{0\}\times T^*$ and $\{\infty\}\times T^*$, yielding 
\[N_C (-2T^*)=   K^*_{\PP^1\times T^*}(-2T^*)\otimes K_C = K_C,\]
and identifying the infinitesimal deformations of the curve with sections of the canonical bundle. The degree of the canonical bundle $K_C$ is then $2(k-1)n$, and the genus of $C$ is then $(k-1)n +1$. The dimension of the deformations of the curve $C$  in $\PP^1\times T^*$ is then the genus. There are thus $(k-1)n +1$ parameters for the curve, and $(k-1)n +1$ parameters for what must generically be a line bundle over the curve, giving $2(k-1)n + 2$ parameters in all. 

The count, along with  additional results, was performed in \cite[Proposition 5.3]{benoitjacques3} using results from \cite{HuMa2}. 
In the case which concerns us of monopoles with $n$ singularities of weight $(1,0,\ldots,0)$ and $n$ singularities of weight $(0,0,\ldots,0,-1)$ at fixed loci, we have from \cite[Proposition 5.3]{benoitjacques3} and \cite{HuMa2}:

\begin{theorem}
\label{thm:elliptic}
The moduli space ${\MM}_s(T^*,  j_0)$
of simple pairs $(\FF ,\rho)$  with $\FF$ of degree $j_0$ and with singularities of the type given above is
smooth, of complex dimension 
$2(k-1)n + 2$. It has
a holomorphic symplectic structure, and the map 
\begin{equation}
(\FF ,\rho)\mapsto \Spec(\rho)
\end{equation}
is Lagrangian, with generic fibre a smooth compact Abelian variety, so that the moduli space has the structure of a holomorphic integrable system.
\end{theorem}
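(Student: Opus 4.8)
The plan is to deduce everything from the spectral correspondence already set up in Section \ref{sec:FourierMukai} and then to run the deformation-theoretic machine of \cite{HuMa2} for group-valued (multiplicative) Higgs pairs, specialised to genus one. The correspondence $(\FF,\rho)\mapsto(C,\KKl)$, with $C=\Spec(\rho)$ cut out by $\det(\rho-z\id)=0$ and $\KKl$ a rank-one sheaf on $C$, is a bijection onto pairs consisting of a curve in the fixed linear system $kT^*+n\PP^1$ meeting $\{0\}\times T^*$ and $\{\infty\}\times T^*$ transversely at the frozen points $t_i^{*,\pm}$, together with a line bundle on it. This exhibits $\Spec$ as a map to the base $B$ of admissible spectral curves, with fibre over a smooth $C$ the Jacobian $\Pic(C)$. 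The count in the paragraph preceding the theorem, via adjunction, gives arithmetic genus $g_a=(k-1)n+1$, so that $\dim B=\dim H^0(K_C)=(k-1)n+1$ and the generic fibre $\Pic(C)$ has the same dimension, for a total of $2(k-1)n+2$.

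For smoothness and the tangent space I would identify the singularity-preserving first-order deformations of $(\FF,\rho)$ with the hypercohomology $\mathbb{H}^1$ of the two-term complex
\[
\End(\FF)\xrightarrow{\;\mathrm{Ad}(\rho)-\mathrm{id}\;}\mathcal{G},
\]
where $\mathcal{G}\supset\End(\FF)$ is the modification of $\End(\FF)$ recording the simple poles of $\mathrm{Ad}(\rho)=\rho(\cdot)\rho^{-1}$ in the $(k-1)$ off-diagonal directions dictated by the local normal forms $a_i^\pm\,\diag(\dots)\,b_i^\pm$ at each of the $2n$ points $t_i^{*,\pm}$; thus $\deg\mathcal{G}=2(k-1)n$. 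Simplicity forces $\mathbb{H}^0=\C$ (only scalars commute with $\rho$), and Serre duality together with the triviality of $K_{T^*}$ identifies $\mathbb{H}^2$ with $(\mathbb{H}^0)^*=\C$; the trace-free part of this obstruction vanishes by simplicity, while the trace (determinant) part deforms without obstruction on the elliptic curve, so the moduli space is smooth. Since $\chi(\End(\FF))=0$ and $\chi(\mathcal{G})=\deg\mathcal{G}=2(k-1)n$ on the genus-one curve $T^*$, the Euler characteristic of the complex is $-2(k-1)n$, whence $\dim\mathbb{H}^1=\dim\mathbb{H}^0+\dim\mathbb{H}^2+2(k-1)n=2(k-1)n+2$, matching the spectral count.

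For the holomorphic symplectic structure I would use the trivialisation of $K_{T^*}$ to define a pairing on $\mathbb{H}^1$ by cup product with the complex, followed by the trace map $\End(\FF)\otimes\End(\FF)\to\OO$ and the Serre-duality isomorphism $H^1(T^*,K_{T^*})\cong\C$. This is antisymmetric, and its non-degeneracy is exactly the self-duality of the complex above under Serre duality with values in $K_{T^*}\cong\OO$: the poles of $\mathrm{Ad}(\rho)$ at $t_i^{*,+}$ in the first-column directions pair precisely with the zeros in the first-row directions, and conversely at $t_i^{*,-}$, so the modification $\mathcal{G}$ is trace-dual to $\End(\FF)$ and the residue contributions at the frozen points cancel. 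This is the Sklyanin/Bottacin form in the multiplicative setting of \cite{HuMa2}. The coefficients of $\det(\rho-z\id)$ are the functions cutting out $B$, and their Poisson-commutativity — the standard involutivity of spectral invariants — shows the fibres of $\Spec$ are isotropic; as $\dim B=\tfrac12\dim\MM_s(T^*,j_0)$, the fibration is Lagrangian, with generic fibre $\Pic(C)=\mathrm{Jac}(C)$ a compact abelian variety, giving the algebraically completely integrable structure.

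The hardest point, I expect, is the meromorphic sheaf-theoretic bookkeeping: pinning down $\mathcal{G}$ so that $\mathbb{H}^1$ sees exactly the deformations preserving the prescribed singularity type, and verifying that the Serre-dual pairing is non-degenerate at the zeros and poles of $\rho$ despite those poles. This is precisely where the rank-one local normal forms and the distinctness of the $t_i^{*,\pm}$ do the work, controlling both the degree of $\mathcal{G}$ and the cancellation of residues that makes the symplectic form well defined and closed.
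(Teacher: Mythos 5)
Your proposal is correct and takes essentially the same route as the paper: the dimension $2(k-1)n+2$ is obtained there by exactly your spectral-curve count (deformations of $C$ fixing its intersections with $z=0,\infty$ identified with $H^0(K_C)$, plus the Jacobian fibre), while smoothness, the symplectic form and the Lagrangian fibration are quoted from \cite[Proposition 5.3]{benoitjacques3} and \cite{HuMa2}. The machinery you reconstruct --- the two-term complex $\End(\FF)\to\mathcal{G}$ with $\deg\mathcal{G}=2(k-1)n$, Serre duality using $K_{T^*}\cong\OO$, the trace-pairing (Sklyanin) form, and involutivity of the spectral invariants --- is precisely the content of those cited results, so your argument fills in the citation rather than departing from it.
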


One can check that the parameter count coincides with that obtained for bundles $\EE $ over $\PP^1\times T$ of rank $n$, second Chern class $k$: here the infinitesimal deformations of the bundle are given by $H^1\bigl(\PP^1\times T, \End(\EE )\bigr)$; if the bundle is irreducible, $H^0(\PP^1\times T, \End(\EE )) =\C$, while $H^2\bigl(\PP^1\times T, \End(\EE )\bigr) = H^0\bigl(\PP^1\times T, \End(\EE )\otimes K_{\PP^1\otimes T}\bigr) =0$, so that the dimension of $H^1\bigl(\PP^1\times T, \End(\EE )\bigr)$ can be computed by Hirzeburch--Riemann--Roch, to be $2kn + 1$.\label{deformation count} The deformations for our problem are constrained, however, to be trivial over $\{0\}\times T$ and  $\{\infty\}\times T$;  these constraints impose $2n-1$ constraints. (The $-1$ is due to the fact that if the deformation of the top exterior power of $\EE $ is trivial over $\{0\}\times T$, then it is automatically trivial over  $\{\infty\}\times T$.) The number of parameters is then $2(k-1)n +2$, as above.

\subsection{Charge one\ok}

One can compute the moduli space quite easily in case of charge one. In this case, the curve $C$ is the graph of a map $\mu\colon T^*\rightarrow \PP^1$, of degree $n$. By fixing the intersections of $C$ with $\{0\}\times T^*$, $\{\infty\}\times T^*$, one is fixing the divisor $D = \sum_it_i^{*,+}-t_i^{*,-}$ of $\mu$; the fact that as an element of $T^*$, $\sum_it_i^{*,+}-t_i^{*,-} =0 $ tells us by Abel's theorem that the map $\mu$ exists. Thus
 the map $\mu$ is determined up to scale, and so up to a (free) action of $\C^*$. The other element is the sheaf $\KKl $, a line bundle supported over $C=T^*$. The map $(C,\KKl )\rightarrow \KKl \in \Pic^n(T^*)= T^*$ commutes with the action of $\C^*$, and so one has
\[\MM = \C^*\times T^*.\]  
We note that the isometry group of translations of $\RT$ acts on the moduli of instantons; moving over to the moduli of pairs  $(C,\KKl )$, the $\R\times S^1$-action becomes the $\C^*$-action; the $T$ summand acts on $\KKl $, a line bundle of degree $n$, by the natural translation action, so that the points of order $n$ in $T$ act trivially.

One can also use the holomorphic correspondence to see that some instantons do not exist. While this paper only treats the Nahm transform for instantons on $\RT$ with distinct limits at $s=\pm\infty$, the correspondence to a holomorphic object on $\PP^1\times T$ and its Fourier--Mukai transform to a pair $(C,\KKl)$ on $\PP^1\times T^*$ are well defined even when some of the $t_i^{*,+}$ coincide with some of the $t_i^{*,-}$. Let us focus on the case $n=2, k=1$. If $t_1^{*,+} = t_1^{*,-}= t^*_1$, then $t_2^{*,+} = t_2^{*,-}=t^*_2$ also, since $\sum_it_i^{*,+}-t_i^{*,-} =0 $. Suppose here that $t_1^*\neq t_2^*$ The divisor $C$ lies in the linear system $|T^*+2\PP^1|$, and so, as there are only constant maps $T^*\to \C^*$, must be of the form $( \PP^1\times \{t_1^*\})  + ( \PP^1\times \{t_2^*\})  +  (\{a\} \times T^*),$ for $a\in \C^*$.

  One can build a bundle $\EE $ with this spectral curve over $\PP^1\times T^*$ by choosing  a bundle $L$ of degree $-1$ over $T$. If $\alpha_1\colon L \rightarrow L_{t_1^{*}}, \alpha_2\colon L \rightarrow   L_{t_2^{*}}$ are non-zero maps, one takes a quotient $Q$
\[ 0 \rTo L( - \{\infty\}\times T^*) \rTo^{(z-a, \alpha_1, \alpha_2)} L\oplus L_{t_1^{*}}\oplus L_{t_2^{*}}\rTo Q\rTo 0.\]
Its restriction to $D_+=  \{0\}\times T$ is $L_{t_1^{*}}\oplus L_{t_2^{*}}$, as is its restriction to $D_- = \{\infty\}\times T$. 

The bundle $Q$ has the right structure over each $\{z\}\times T$ to give our spectral curve, but has degree $1$ in the $\PP^1$ direction. One can make one of four elementary modifications ($i= 1,2, \pm=+,-$) to bring the degree down to zero:
\[ 0\rTo \EE _{i,\pm} \rTo Q \rTo L_{t_i^{*}}|_{D_\pm}\rTo 0.\]

The bundle $\EE _{i,\pm}$ has two subsheaves, given as the images of  $L_i(-D_\pm), L_j, j\neq i$, which could destabilise it, of degree $-1,0$ respectively in the $\PP^1$ direction. Whether they do or not depends on the parabolic weights $\theta_i^\pm$. We recall that they satisfy $\theta_1^+ + \theta_2^+ -\theta_1^- -\theta_2^- = 0$. Suppose that the weights are distinct, and order them :
$\theta_1^+ -1\leq \theta_2^+ < \theta_i^- \leq\theta_j^- <\theta_1^+$. For the bundle $\EE _{2,\pm}$,  the potential destabilising bundles are $ L_{t_1^{*}}, L_{t_2^{*}}(-D_\pm)$, but these have $\theta$-degree $-\theta_1^++\theta_1^-$, and $-1-\theta_2^++\theta_2^-$, both of which are negative, and so $\EE _{2,\pm}$ is stable.   We note that the parameters are again a point $a\in \C^*$, and a line bundle $L\in \Pic^{-1}(T) = T$, and so we have the same moduli space.  

  If all the $\theta_i^+ =\theta_i^-$, the bundles we have built are semistable but not stable.
On the other hand, one can see that this construction basically exhausts possible candidates for a bundle $\EE $ with $t_1^{*+} = t_1^{*,-}= t^*_1$, $t_2^{*+} = t_2^{*,-}=t^*_2$ with $t_1^*\neq t_2^*$. Indeed, as we have seen it must have spectral curve $(\PP^1\times \{t_1^*\}) + (\PP^1\times \{t_2^{* }\}) + \{ a\} \times T^*$. Hence  $\EE|_{\{z\}\times T}\cong L_{t_1^*}\oplus L_{t_2^*}$ for a generic $z\in\PP^1$, and so if one takes the direct image of $(L_{t_i^{*}})^*\otimes \EE $ to $\PP^1$, one obtains a line bundle, which has degree $k_i$. This in turn translates into an exact sequence $L_{t_1^{*}}(-k_1D_-)\oplus L_{t_2^{*}D_-}(-k_2)\rightarrow \EE  \rightarrow {\ca S}$ defining. As the spectral curve has a component $ \{ a\} \times T^*$, taking the Fourier--Mukai transform of this sequence tells us that the sheaf ${\ca S}$ must be a line bundle supported over $ \{ a\} \times T^*$. A computation with Chern characters tells us that it is a line bundle of degree $-1$, and that $k_1+ k_2 = 1$. One thus has extensions, and these are given by the construction above. If all the $\theta_i^+ =\theta_i^-$, since  these bundles are semi-stable but not stable, one has

\begin{proposition} Let us fix the asymptotics of a connection to be the same flat connection $\nabla_\infty$ on $S^1\times T$ at both ends of the cylinder, with $\theta_i^+ = \theta_i^-$, and $t_1^{*+} = t_1^{*,-} \neq t_2^{*+} = t_2^{*,-} $. Then there are no  $\U(2)$ instantons of charge one with these asymptotics on the cylinder. 
\end{proposition}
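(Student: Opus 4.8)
The plan is to run the whole argument on the holomorphic side, using the bijection between finite-energy instantons and $\theta^\pm$-stable bundles $\EE \to \PP^1\times T$ furnished by Theorem \ref{thm1} together with the existence result of Section \ref{sec:instantonsandholomorphicbundles}. Assume, for contradiction, that a charge one $\U(2)$ instanton with the stated asymptotics exists; I would split into the irreducible and reducible cases.

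Suppose first that the instanton is irreducible. By Theorem \ref{thm1} it yields a $\theta^\pm$-stable bundle $\EE$ with $c_1(\EE)=0$, $c_2(\EE)=1$, and $\EE|_{\{0\}\times T}\iso\EE|_{\{\infty\}\times T}\iso L_{t_1^*}\oplus L_{t_2^*}$, where $t_i^{*,+}=t_i^{*,-}=t_i^*$ and $t_1^*\neq t_2^*$. The spectral-curve analysis of the preceding paragraphs then pins the associated curve down to $C=(\PP^1\times\{t_1^*\})+(\PP^1\times\{t_2^*\})+(\{a\}\times T^*)$ for some $a\in\C^*$, and the exhaustiveness argument given above identifies $\EE$ with one of the elementary modifications $\EE_{i,\pm}$. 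But with $\theta_i^+=\theta_i^-$ the candidate destabilising subsheaves $L_{t_1^*}$ and $L_{t_2^*}(-D_\pm)$ have $\theta^\pm$-degree exactly zero, so each $\EE_{i,\pm}$ is semistable but not stable. This contradicts the $\theta^\pm$-stability of $\EE$, ruling out the irreducible case.

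It remains to exclude reducible instantons. A reducible $\U(2)$ connection respects a splitting $E=L_1\oplus L_2$ into line subbundles, and each summand carries a $\U(1)$ ASD connection of finite energy, hence is flat (as already noted for the determinant bundle). A flat $\U(1)$ connection has vanishing curvature, so $c_2(E)=c_1(L_1)\cup c_1(L_2)=0$; the charge is therefore zero, contradicting charge one. Combining the two cases gives the proposition.

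I expect the genuinely substantive step to be the exhaustiveness claim, namely that every holomorphic $\EE$ with the prescribed Chern data and spectral curve is one of the $\EE_{i,\pm}$, since this requires reconstructing $\EE$ from its direct image $L_{t_i^*}^*\otimes\EE$ on $\PP^1$ and using the inverse Fourier--Mukai transform of Section \ref{sec:FM sheaf on curve to bundle} to force the quotient sheaf $\SSS$ to be a degree $-1$ line bundle on $\{a\}\times T^*$ with $k_1+k_2=1$. Once that structural rigidity is established, the failure of stability and the charge computation in the reducible case are both short.
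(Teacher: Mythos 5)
Your proposal follows the paper's own proof essentially verbatim: the irreducible case is ruled out because the preceding spectral-curve and exhaustiveness analysis shows every candidate bundle $\EE_{i,\pm}$ is semistable but not stable once $\theta_i^+=\theta_i^-$, contradicting the stability furnished by the Kobayashi--Hitchin correspondence, and the reducible case is ruled out because a reducible instanton splits as a sum of finite-energy $\U(1)$ instantons, which are flat and hence of charge zero. The one slip is immaterial: with $\theta_i^+=\theta_i^-$ only the subsheaf $L_{t_1^*}$ has $\theta^\pm$-degree exactly zero (the other candidate, $L_{t_2^*}(-D_\pm)$, has degree $-1$), but a single subsheaf of degree zero already destroys strict stability, so your conclusion stands.
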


Indeed, the fact that there are no stable bundles of charge one tells us that there are no irreducible instantons of charge one; on the other hand a reducible instanton would be a sum of $\U(1)$ instantons; as the asymptotics give topologically trivial bundles in the $T$-direction,  a sum of $\U(1)$ instantons must have charge zero.


\appendix
\section{Summary of Notation} 
\label{sec:summary_of_notation}
In this appendix, we summarize the notation used in the paper.

\begin{itemize}
\item Coordinates on $S^1\times T^*$: $\theta, t^*$, with $t^*$ complex.

\item  Coordinates on  $\RT$: $s, \mu, \phi, \psi$, with $w$ a complex coordinate on $T$ combining $\phi, \psi$, and $z = \exp(-s - i \mu), \bar z = \exp(-s+i\mu)$, so that $z=0$ corresponds to $s=\infty$.

\item $t$ is the time coordinate in the heat flow.

\item $(F,\nabla,\phi)$ is a $\U(k)$ monopole, defined on $ S^1\times T^*$, with Dirac type singularities of weight    $(1,0,\ldots,0)$ at $n$ points $(  \theta^+_i, t^{*,+}_i)\in  S^1\times T^*$, and of weight    $(-1,0,\ldots,0)$ at $n$ points $(\theta^-_i, t^{*,-}_i)\in  S^1\times T^*$. The points $t^{*,+}_i, t^{*,-}_i$ are supposed distinct.

\item $(E,\nabla)$ is a rank $n$ bundle with a finite energy charge $k$ anti-self-dual $\U(n)$ connection on the flat cylinder  $\RT$. At $+\infty$,   $(E,\nabla)$ is asymptotic to a fixed flat $\U(1)^n$ connection on $S^1 \times T $ which, on each $ \{\mu\}\times T$, corresponds to the sum of line bundles $\oplus_i L_{t^{*,+}_i}$, and on the $S^1$ factors, acts on the $L_{t^{*,+}_i}$ by $\frac{\partial}{\partial u} +  i\theta^+_i$. One has, at $-\infty$, the same, but with $(t^{*,-}_i, \theta^-_i)$.

\item $(\FFl,\rho)$ is a pair consisting of a rank $k$ holomorphic vector bundle over $T$, and $\rho$ a meromorphic automorphism of $\FFl$, with simple zeroes at ${t^{*,+}_i}$, simple poles at ${t^{*,-}_i}$, and an isomorphism elsewhere. The pair satisfies a stability condition involving the $\theta^\pm_i$. The degree of $\FFl$ is $\widehat\Theta$.

\item $(\KKl,C)$ is a pair consisting of a holomorphic curve $C$ of degree $(n,k)$ in $ \PP^1 \times T^*$, and a sheaf  $K$ supported on $C$. The curve $C$ intersects $\{0\} \times T^*$ in the points $(0, t^{*,+}_i)$, and 
$\{\infty\}\times T^*$ in $(0, t^{*,-}_i)$.

\item $\EE$ is a semi-stable holomorphic rank $n$ vector bundle of degree 0 over $ \PP^1 \times T$, with $c_2(\EE ) = k$. Over $0$ in $\PP^1$, the bundles $\EE $ is the sum $\oplus_i L_{t^{*,+}_i}$, and over $\infty$, it is $\oplus_i L_{t^{*,-}_i}$

\item Basis for cohomology: Let $\omega_\PP, \omega_{S^1\times S^1}, \omega_T, \omega_{T^*}$ be  2-forms on $\PP^1,{S^1\times S^1}, T, T^*$ respectively which integrate to one, we will denote the lifts of these forms to products $\PP^1\times T$, etc. by the same letters; we will also define the two-form $\omega_\Delta$ on $T\times T^*$ as the Poincar\'e dual to the diagonal divisor.

\end{itemize}

\subsection*{Acknowledgements.} The authors would like to thank Marcos Jardim, Tomasz Mrowka and Mark Stern for useful discussions and for their encouragements.    Concentrated bouts of joint work were accomplished during various programmes: at  the  University of Leeds, during the programme \emph{Gauge Theory and Complex Geometry} in 2011;   the Isaac Newton Institute for Mathematical Sciences,  during the programme \emph{Metric and Analytic Aspects of Moduli Spaces} in 2015; the Banff International Research Station for Mathematical Innovation and Discovery (BIRS), during their workshop 17w5149 \emph{The Analysis of Gauge-Theoretic Moduli Spaces} that took place in 2017. We thank them all for a fertile intellectual environment.
   At some point of writing, BC was on sabbatical at the Perimeter Institute for Theoretical Physics: many thanks for this quiet space and time away from teaching and administrative duties.   
BC is supported by NSERC, and  JH   by NSERC and FQRNT.  The diagrams in this paper were
created using Paul Taylor's Commutative Diagram package.

\def\cprime{$'$}\def\cprime{$'$} \def\cprime{$'$}

\end{document}